\numberwithin{equation}{section}
\newtheorem{theorem}{Theorem}[section]
\newtheorem{lemma}{Lemma}[section]
\newtheorem{corollary}{Corollary}[section]
\newtheorem{definition}{Definition}[section]
\newtheorem{example}{Example}[section]
\newtheorem{proposition}{Proposition}[section]
\newtheorem{remark}{Remark}[section]
\newenvironment{assumptionp}[1]{
	
	\assumptionalt
}{\endassumptionalt}
\def\lla{\left\langle}
\def\rra{\right\rangle}
\def\R{\mathbb R}
\def\E{\mathbb E}
\begin{document}
	\title[ ]{Anticipated backward stochastic evolution equations and maximum
		principle for path-dependent systems in infinite dimensions}
	\author[G. Liu]{Guomin Liu}
	\address{School of Mathematical Sciences, Nankai University, Tianjin 300071,
		China}
	\email{gmliu@nankai.edu.cn}
	\author[J. Song]{Jian Song}
	\address{Research Center for Mathematics and Interdisciplinary Sciences, State Key Laboratory of Cryptography and Digital Economy Security,
		Shandong University, Qingdao 266237, China}
	\email{txjsong@sdu.edu.cn}
	\author[M. Wang]{Meng Wang}
	\address{Department of Mathematical and Statistical Sciences,
		University of Alberta, Edmonton, Alberta,
		Canada}
	\email{mwang13@ualberta.ca}
	
	\begin{abstract}
		For a class of  path-dependent stochastic evolution equations  driven by cylindrical $Q$-Wiener process, we study the Pontryagin's maximum principle for the
		stochastic recursive optimal control problem. In this infinite-dimensional control system, the state process depends on  its  past trajectory, the control is delayed via an integral with respect to a general finite measure, and  the final cost relies on the delayed state.
		To obtain the maximum principle, we introduce a functional  adjoint operator for the non-anticipative path derivative and establish the well-posedness of an anticipated backward stochastic evolution equation in the path-dependent form, which serves as the adjoint equation.

		\medskip\noindent\textbf{Keywords. } Path-dependent stochastic  evolution equation, path derivative,
		anticipated backward stochastic evolution equation, recursive optimal
		control, stochastic maximum principle. \smallskip
		
		\noindent\textbf{AMS 2020 Subject Classifications.} 93E20, 60H15, 60H30.
	\end{abstract}
	
	\date{}
	\maketitle
	
	\tableofcontents

	\vspace{-10pt}
\section{Introduction}

In this paper, we investigate the stochastic recursive optimal control problem
of \emph{path-dependent} stochastic evolution equation (PSEE for short) evolving in a Hilbert space $H$: 
\begin{equation}
	\left\{ \begin{aligned} dx(t)= &\,\, \Big[A(t)x(t)+b \Big(t,x_{t-K,t},\int_{-K
		}^{0}u(t+s)\mu_1(ds)\Big)\Big]dt \\ & +\Big[B(t)x(t)+\sigma
	\Big(t,x_{t-K,t},\int_{-K }^{0}u(t+s)\mu_1(ds)\Big)\Big]dw(t),\quad t\in[0,T],
		\\ x(t)= &\,\, \gamma(t),\,\,u(t)=v(t),\quad t\in [ -K ,0],
	\end{aligned}\right.  \label{EQ-1}
\end{equation}
with the cost functional   given by $J(u(\cdot )):=y(0)$, where $(y(\cdot),z(\cdot))$ solves the
following  backward stochastic differential equation (BSDE for
short) 
\begin{equation}\label{BSDE}
	\left\{ \begin{aligned} -dy(t)= &\,\, f\Big(t,x_{t-K,t},y(t),z(t), \int_{-K
		}^{0}u(t+s)\mu_1(ds)\Big)dt -z(t)dw(t), \quad t\in[0,T], \\ y(T)= &
		\,\,h\Big(\int_{-K}^0x(T+s)\mu_2(ds)\Big). \end{aligned}\right.
\end{equation}
In this control problem \eqref{EQ-1}-\eqref{BSDE}, $K\ge 0$ is a fixed constant, $x_{t-K,t}$ denotes the path of the  state process $x$   on the time interval $[t-K,t]$ (see \eqref{e:x-t-k-t}),
$w(\cdot )$ is a \emph{cylindrical $Q$-Wiener process} on some
Hilbert space $\mathcal{K}$, $A(t)$ and $B(t)$ are random  unbounded linear
operators, 
the coefficient functions $b,\sigma ,f,$ and $h$ are random  functions
taking values in $H$ or $\mathcal{L}(\mathcal{K};H)$ depending on the context, $u(\cdot)$ is
a control process with values in $U$ which is a convex subset of a Hilbert
space $H_{1}$, and $\mu _{1},\mu _{2}$ are finite measures on $[-K,0]$. 

In the classical optimal control theory, the performance of a control  is usually evaluated by a cost functional (utility function) which consists of  a final cost and a running cost. Duffie and Epstein~\cite
{duffie1992stochastic} introduced the notion of stochastic differential
recursive utility, which was later extended to the form of backward
stochastic differential equation (BSDE for short) by Peng \cite%
{peng1993backward}, El Karoui, Peng and Quenez \cite{el1997backward}. An
optimal control problem with cost functional described by a BSDE is then
called a stochastic recursive optimal control problem.
The Pontryagin's maximum principle is widely recognized as an effective
approach in solving optimal control problems (see \cite%
{peng1990general,doi:10.1137/S0363012996313100,du2013maximum,lu2014general,fuhrman2013stochastic,liu2021maximum}
and the references therein). In particular, Peng \cite{peng1993backward} 
derived a local form of the stochastic maximum principle for finite-dimensional  
stochastic recursive optimal control problems.

Path-dependent differential equations describe a class of systems whose  evolution 
depends not only on the current state but also on the entire past
trajectories. In literature,  studies on the maximum principle for path-dependent stochastic
systems have been focusing on systems with an integral delay with respect to some finite measure.  For instance,  a pointwise delay is an  integral delay with respect to a Dirac delta measure,  a moving average delay is an integral
delay with respect to the Lebesgue measure, and there have been fruitful results on maximum principles for  control systems with such delays. In particular, for the finite-dimensional case, one may refer to   Chen and Wu \cite{10cw}, {\O}ksendal, Sulem and Zhang \cite{11osz} for   pointwise delay  and moving average delay, and to Guatteri and Masiero~\cite%
{guatteri2021stochastic} for an integral delay with respect to a {general finite}
measure; for  the infinite-dimensional case, one can refer to {\O}ksendal, Sulem and Zhang \cite{oksendal2012optimal} and Meng and Shen~\cite{16ms} for the  pointwise delay and  moving average delay, and to Guatteri, Masiero and Orrieri \cite{guatteri2017stochastic} for 
integral delay with respect to {a general finite measure}, {in the state equation of which  the drift term does not involve the control delay,
	and the diffusion is independent of state and control.} 
We also refer the reader to \cite{FMT10,GM23,20lww,MSWZ23,YU20122420,ZX17} and references therein for more results on maximum principle for delay systems. The adjoint equations of
systems with delay,  as derived in the above-mentioned works, are now known as \emph{anticipated BSDEs} (ABSDEs for short), the theory of
which was established by Peng and Yang \cite{10py}.

In contrast, the only work on the maximum principle for general path-dependent
control systems, to the best of  our knowledge, is due to Hu and Peng \cite{hu1996maximum} for finite-dimensional systems,  where a backward  stochastic integral equation of Volterra type  was derived as the adjoint equation. We remark that the system considered in \cite{hu1996maximum} does not contain control delay and all the coefficient functions are deterministic; see Remark \ref{rem:time-state dual}.

In this paper, we aim to derive the maximum principle for
the recursive optimal control problem  \eqref{EQ-1}-\eqref{BSDE} of an infinite-dimensional
path-dependent stochastic system (see Theorem~\ref{SMP}). In
our control system, the past trajectories (of the control and the state) and
the unbounded operators are involved in both drift and diffusion terms, and the
final cost term can also depend on the past of the state.   In view of  the general form of path dependence in our control system, existing methodologies seem insufficient to achieve the desired result. Below, we briefly outline  two critical components of our proof, which also  represent two  main contributions of this work: the dual analysis  of the path derivative operator  and the establishment of the well-posedness for the ABSEE as adjoint equations.

For the non-anticipative  path derivative operator (see \eqref{e:extension}) in the system,  we make use of its operator-valued Dinculeanu-Singer  representing measure  to derive its  adjoint operator in the functional sense, which turns out to be anticipative or non-adapted (see Proposition~\ref{thm:rho*} and  Remark~\ref{rem:rho*}).  This enables us to obtain a BSEE involving  anticipative operators  to  serve as the adjoint equation (see {equation} \eqref{e-adjoint}) in the stochastic maximum principle (see Theorem \ref{SMP}). As a comparison,  a direct functional analytic method is utilized and an adjoint BSDE of Volterra type was derived in \cite{hu1996maximum}.

In our setting, the adjoint equation is a  path-dependent ABSEE  with a running terminal condition on an
interval (see~\eqref{absee-0} and \eqref{e-adjoint}), of which the well-posedness needs to be established. The ABSEE with a running terminal, to our best knowledge, was introduced in \cite{guatteri2017stochastic}. Given that the state equation of \cite{guatteri2017stochastic} incorporates the state's history via  an integral delay with respect to a prescribed finite measure and 
includes neither control delay in the drift term nor state and control in the diffusion term, the generator of the
corresponding ABSEE is linear  and independent of $q$, and depends on the future information of $p$ through an integral with respect to the delay measure. Moreover, in their ABSEE the running datum   $\zeta$ is assumed to be continuous, and the  {$dF$ is assumed to  be a deterministic  finite  
measure}. As a comparison, we establish a well-posedness  result for path-dependent ABSEE~\eqref{absee-0} in a general form, where $F$ is a random process with bounded variation, the datum $\zeta $ is
measurable, and the generator  is nonlinear
in both $p$ and $q$ (see also Remark \ref{rem:GM21}).  The
well-posedness of \eqref{absee-0} is obtained by a combination of  the continuation method,
solution translation, and an approximation argument (see Theorem \ref{ABSEE-thm}), after we establish some  {\it a
	priori} estimates  (see Theorem~\ref{es-p})  by using an infinite-dimensional It\^{o}'s formula (see Lemma~\ref{Ito-lemma}).

The rest of this paper is organized as follows. We collect some preliminaries on infinite-dimensional stochastic analysis in Section~\ref{sec:pre}. In Section~\ref{sec:SDEE}, we prove the well-posedness results for 
path-dependent SEEs and anticipated BSEEs. In Section~\ref{sec:path-derivative},  we investigate the non-anticipative path derivative and its functional adjoint operator.  In Section~\ref{sec:SMP}, we formulate our stochastic recursive optimal
control problem and derive the maximum principle.
Finally, in Section~\ref{sec:APP} we apply our result to  controlled path-dependent parabolic SPDEs and linear
quadratic (LQ)  problems.

\section{Preliminaries}
\label{sec:pre}

In this section, we provide some preliminaries on stochastic calculus in
infinite-dimensional spaces. We refer to \cite{DZ92,PR07} for more details.

Let $X$ and $Y$ be generic Banach spaces. We denote by $\mathcal{L}(X,Y)$
the space of bounded linear operators mapping from $X$ to $Y$, and we write $%
\mathcal{L}(X)$ for $\mathcal{L}(X,X)$ and denote by $I_{X}$ the identity
operator on $X$. Assume $X$ is a separable Hilbert space with an orthonormal basis $%
\{e_j\}_{j=1}^N$, where $N\in \mathbb{N}\cup\{\infty\}$ is a finite number
or infinity depending on the dimension of $X$. In the remainder of this paper, we
focus on the case $N=\infty$, noting that all results also hold for the finite-dimensional setting. We denote by $\mathcal{L}_{2}(X,Y)$ the space of \emph{%
	Hilbert-Schmidt operators} mapping from $X$ to $Y,$ i.e., $\mathcal{L}%
_{2}(X,Y)$ consists of $T\in \mathcal{L}(X,Y)$ satisfying 
\begin{equation*}  \label{e:norm-L2}
	\Vert T\Vert _{\mathcal{L}_{2}(X,Y)}^{2}:=\sum_{j=1}^{\infty}\Vert
	Te_{j}\Vert _{Y}^{2}<\infty.
\end{equation*}
If we assume further that $Y$ is a separable Hilbert space, the space $%
\mathcal{L}_{2}(X,Y)$ of Hilbert-Schmidt operators becomes a separable
Hilbert space with the inner product 
\begin{equation*}
	\left\langle T,G\right\rangle _{\mathcal{L}_{2}(X,Y)}:=\sum_{j=1}^{\infty}%
	\left \langle Te_{j},Ge_{j}\right\rangle _{Y}.
\end{equation*}

Assume on some complete probability space $(\Omega,\mathcal{F},P)$,  $w=\{w(t)\}_{t\in \lbrack 0,T]}$ is a $\mathcal K$-valued  \emph{cylindrical $Q$-Wiener process}, for some separable Hilbert space $\mathcal K$ and symmetric, nonnegative-definite (i.e., self-adjoint) operator $Q\in \mathcal{L}(\mathcal K)$.  
 More specifically, \begin{equation*}
		w(t)=\sum_{j=1}^{\infty }\beta ^{j}(t)Q^{\frac{1}{2}}e_{j}, ~ t\in[0,T],
	\end{equation*} where  $\big\{\beta ^{j}(t), t\in[0,T]\big\}_{j\in \mathbb{N}}$ is a family of independent
	one-dimensional standard Brownian motions on $(\Omega,\mathcal{F},P)$, $Q^{\frac{1}{2}}$ is the nonnegative square root of $Q$, and $\{e_{j}\}_{j=1}^{\infty }$ is an orthonormal basis diagonalizing $Q$, i.e.,
$
		Qe_{j}=\lambda _{j}e_{j},~j\in \mathbb{N},
$
	with $\lambda _{j}\geq 0$ being the eigenvalues of $Q$. Note that if $Q$ has a finite trace, $w$ is   a \emph{standard} $\mathcal K$-valued Wiener process of trace class; if   $Q=I_{\mathcal K}$,  $w$ is a \emph{cylindrical Wiener process}.

Let $\mathbb{F}=\{\mathcal{F}_{t}\}_{t\geq 0}$ be the filtration generated
by the Wiener process $\{w(t)\}_{t\in \lbrack 0,T]}$ and augmented by the
class of all $P$-null sets of $\mathcal{F}$. Let $E$ denote a generic
separable Hilbert space with norm $\Vert \cdot \Vert _{E}$. We introduce the
following spaces that will be used in the paper.

\begin{itemize}
	\item For any $\sigma $-algebra $\mathcal{G}$, $L^{2}(\mathcal{G};E)$ is the
	set of all $\mathcal{G}$-measurable random variables $\xi $ taking values in 
	$E$ such that 
	\begin{equation*}
		\mathbb{E}\left[ \Vert \xi \Vert _{E}^{2}\right] <\infty.
	\end{equation*}
	
	\item $L^{2}(0,T;E)$ denotes the set of all $E$-valued deterministic
	processes $\phi =\{\phi (t),\ t\in \lbrack 0,T]\}$ such that 
	\begin{equation*}
		\int_{0}^{T}\Vert \phi (t)\Vert _{E}^{2}dt<\infty .
	\end{equation*}
	
	\item $L_{\mathbb{F}}^{2}(0,T;E)$ denotes the set of all $E$-valued $\mathbb{%
		F}$-adapted processes $\phi =\{\phi (t,\omega ),\ (t,\omega )\in \lbrack
	0,T]\times \Omega \}$ such that 
	\begin{equation*}
		\mathbb{E}\Big[ \int_{0}^{T}\Vert \phi (t)\Vert _{E}^{2}dt\Big] <\infty .
	\end{equation*}
	
	\item $C_{\mathbb{F}}^{2}(0,T;E)\ $($D_{\mathbb{F}}^{2}(0,T;E),$ resp.)  is
	the set of all $E$-valued $\mathbb{F}$-adapted continuous (c\`{a}dl\`{a}g$,$
	resp.) processes $\phi =\{\phi (t,\omega ),\ (t,\omega )\in \lbrack
	0,T]\times \Omega \}$ such that 
	\begin{equation*}
		\mathbb{E}\Big[\sup_{t\in[0, T]}\Vert \phi (t)\Vert _{E}^{2}\Big]<\infty .
	\end{equation*}
	
	\item Given an $\mathbb{F}$-adapted finite-variation process  $F$ on $[0,T]$, $L_{\mathbb{F}%
		,F}^{2}(0,T;E)$ denotes the set of all $E$-valued progressively measurable
	processes $\phi$ satisfying 
	\begin{equation*}
		\mathbb{E}\Big[ \int_{0}^{T}\Vert \phi (t)\Vert _{E}^{2}d|F|_v(t)\Big] %
		<\infty,
	\end{equation*}
	where $|F|_v$ is the total variation process of $F$. In particular, when $%
	F(t)=t$, $L_{\mathbb{F},F}^{2}(0,T;E)$ coincides with $L_{\mathbb{F}%
	}^{2}(0,T;E).$
\end{itemize}

Let $V$ and $H$ be two  separable Hilbert spaces such that $V$ is
densely embedded in $H$. Identify $H$ with its dual space $H^{\ast }$ and
denote by $V^{\ast }$ the dual space of $V$. Then we have $V\subset
H=H^{\ast }\subset V^{\ast }$. Denote by $\left\langle \cdot ,\cdot
\right\rangle _{H}$ (resp. $\left\langle \cdot ,\cdot \right\rangle _{\ast }$%
) the scalar product in $H$ (resp.  the duality product between $V^{\ast }$ and $V$%
). We call $(V,H,V^{\ast })$ a \emph{Gelfand triple}.

Recall that $\mathcal K$ is the Hilbert space where the Wiener process $w$ takes
values. Then its subspace $\mathcal K_{0}:=Q^{\frac{1}{2}}(\mathcal K)$ is a Hilbert space
endowed with the inner product 
\begin{equation*}
	\left\langle u,v\right\rangle _{0}=\langle Q^{-\frac{1}{2}}u,Q^{-\frac{1}{2}%
	}v\rangle _{\mathcal K},\text{ }u,v\in \mathcal K_{0}.
\end{equation*}%
Denote  $\mathcal{L}_{2}^{0}(\mathcal K,H):=\mathcal{L}_{2}(\mathcal K_{0},H)=\mathcal{L}%
_{2}(Q^{\frac{1}{2}}(\mathcal K),H)$, of which the norm is given by 
\begin{equation*}
	\Vert F\Vert _{\mathcal{L}_{2}^{0}(\mathcal K,H)}:=\Vert F\Vert _{\mathcal{L}%
		_{2}(\mathcal K_{0},H)}=\Vert FQ^{\frac{1}{2}}\Vert _{\mathcal{L}_{2}(\mathcal K,H)}.
\end{equation*}%
We also write $\mathcal{L}_{2}^{0}$ for $\mathcal{L}_{2}^{0}(\mathcal K,H)$ for
notation simplicity. For $f\in L_{\mathbb{F}}^{2}(0,T;\mathcal{L}_{2}^{0}),$
we define the stochastic integral with respect to $w$ as follows:%
\begin{equation*}
	\int_{0}^{T}f(t)dw(t):=\sum_{k=1}^{\infty }\int_{0}^{T}f(t)Q^{\frac{1}{2}%
	}e_{k}d\beta ^{k}(t),
\end{equation*}%
where the right-hand side is understood as a limit in $L^{2}(\mathcal{F}%
_{T};H)$. The process $\int_{0}^{t}f(s)dw(s)$ is an $H$-valued continuous
martingale satisfying the It\^o isometry 
\begin{equation*}
	\mathbb{E}\Big[ \Big\Vert \int_{0}^{t}f(s)dw(s)\Big\Vert _{H}^{2}\Big]
	=\mathbb{E}\Big[\int_{0}^{t}\Vert f(s)\Vert _{\mathcal{L}_{2}^{0}}^{2}ds%
	\Big]=\mathbb{E}\Big[\int_{0}^{t}\Vert f(s)Q^{\frac{1}{2}}\Vert _{\mathcal{L}
		_{2}(\mathcal K,H)}^{2}ds\Big],
\end{equation*}
and the  Burkholder-Davis-Gundy inequality: for some constant $C>0$, 
\begin{align*}
	\mathbb{E}\Big[ \sup_{t\in[0,T]}\Big\Vert
	\int_{0}^{t}f(s)dw(s)\Big\Vert _{H}^{2}\Big] &\leq C\mathbb{E}\Big[
	\int_{0}^{T}\Vert f(t)\Vert _{\mathcal{L}_{2}^{0}}^{2}dt\Big]\\&=C\mathbb{E%
	}\Big[\int_{0}^{T}\Vert f(t)Q^{\frac{1}{2}}\Vert _{\mathcal{L}%
		_{2}(\mathcal K,H)}^{2}dt\Big].
\end{align*}

For $f \in L^2_{\mathbb{F}}(0,T;\mathcal{L}_2^0)$ and $g \in
L^2_{\mathbb{F}}(0,T;H)$, \begin{equation*}
	M(t): = \int_{0}^{t} \big< f(s) dw(s), g(s)\big>e_H=\sum_{k=1}^{\infty}
	\int_{0}^{t} \big< f(s) Q^{\frac12}e_{k}, g(s) \big>_H d\beta ^{k}(s),
\end{equation*}
is a real-valued martingale with quadratic variation 
\begin{equation}  \label{e:QV}
	\begin{aligned} & \langle M \rangle(t) = \sum_{k=1}^\infty
		\int_{0}^{t}\langle f(s) Q^{\frac12}e_{k}, g(s)\rangle_H^2 ds \\& \le
		\int_{0}^{t} \sum_{k=1}^\infty \|f(s) Q^{\frac12}e_{k}\|_H^2\| g(s)\|_H^2 ds
		=\int_0^t \|f(s)\|^2_{\mathcal L_2^0} \|g(s)\|_H^2ds. \end{aligned}
\end{equation}

Consider three processes $\{v(t,\omega ),\ (t,\omega )\in \lbrack 0,T]\times
\Omega \}$, $\{M(t,\omega ),\ (t,\omega )\in \lbrack 0,T]\times \Omega \}$
and $\{v^{\ast }(t,\omega ),\ (t,\omega )\in \lbrack 0,T]\times \Omega \}$
with values in $V$, $H$ and $V^{\ast }$, respectively. Assume that $v(t,\omega )$ is
measurable with respect to $(t,\omega )$ and   $\mathcal{F}_{t}$-measurable
with respect to $\omega $ for $t\in \lbrack 0,T]$, and for any $\eta \in V$, the quantity $%
\left\langle v^{\ast }(t,\omega ),\eta \right\rangle _{\ast } $ is
measurable with respect to $(t,\omega )$ and $\mathcal{F}_{t}$-measurable
with respect to $\omega $ for $t\in \lbrack 0,T]$. Let
$M$ be a continuous local martingale and  $\left\langle
M\right\rangle $ be the increasing process part for $\Vert M\Vert
_{H}^{2}$ in the Doob-Meyer decomposition. Suppose $F$ is a real-valued
adapted c\`{a}dl\`{a}g finite-variation process on $[0,T]$, and $\zeta\in L_{\mathbb{F}%
	,F}^{2}(0,T;H)$, $v^{\ast }\in L_{\mathbb{F}}^{2}(0,T;V^{\ast })$, $v\in L_{%
	\mathbb{F}}^{2}(0,T;V)$.

The following It\^o's formula is an extension of  \cite[Theorem 1]{82gk}.
	\begin{lemma}
		\label{Ito-lemma} Suppose that for each $\varphi \in V$, it holds  that
		\begin{equation*}
			\left\langle v(t),\varphi \right\rangle _{H}=\int_{0}^{t}\left\langle
			v^{\ast }(s),\varphi \right\rangle _{\ast }ds+\int_{(0,t]}\left\langle
			\zeta(s),\varphi \right\rangle dF(s)+\left\langle M(t),\varphi \right\rangle
			_{H},
		\end{equation*}
		for $dt\times dP$-almost all $(t,\omega )\in \lbrack 0,T]\times \Omega $.
		Then there exists an adapted c\`{a}dl\`{a}g $H$-valued process $h(\cdot )$
		such that
		
		\begin{itemize}
			\item[(i)] for $dt\times dP$-almost all $(t,\omega )\in \lbrack 0,T]\times
			\Omega $, $h(t, \omega)=v(t,\omega)$;
			
			\item[(ii)] for $t\in \lbrack 0,T]$, it holds almost surely
			\begin{equation}  \label{e:ito-lemma}
				\begin{aligned} \Vert h(t)\Vert _{H}^{2}=&\Vert h(0)\Vert
					_{H}^{2}+2\int_{0}^{t}\left\langle v^{\ast }(s),v(s)\right\rangle _{\ast
					}ds+2\int_{(0,t]}\left\langle h(s),\zeta(s)\right\rangle dF(s)\\
					&+2\int_{0}^{t}\left\langle h(s),dM(s)\right\rangle _{H}+\left\langle
					M\right\rangle (t)-\int_{(0,t]}\Vert \zeta(s)\Vert _{H}^{2}\Delta F(s)dF(s),
				\end{aligned}
			\end{equation}
			where $\Delta F(s) = F(s) -F(s^-)$.
		\end{itemize}
	\end{lemma}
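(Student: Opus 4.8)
The plan is to read the hypothesis as the statement that $v$ is, modulo the Gelfand embedding $H=H^{*}\hookrightarrow V^{*}$, an $H$-valued semimartingale, and then to derive the energy identity by combining a localization argument with the It\^o formula of \cite{82gk} for the continuous part and an explicit treatment of the finite-variation integrator $\int_{(0,\cdot]}\zeta\,dF$. Concretely, the right-hand side of the hypothesis defines a c\`adl\`ag $V^{*}$-valued process $A(t)+M(t)$, where $A(t):=\int_{0}^{t}v^{*}(s)\,ds+\int_{(0,t]}\zeta(s)\,dF(s)$ is of finite variation (its $\int\zeta\,dF$ part being $H$-valued) and $M$ is the given continuous local martingale, and the hypothesis asserts $v(t)=A(t)+M(t)$ in $V^{*}$ for $dt\times dP$-a.e.\ $(t,\omega)$. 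The first task is to produce the c\`adl\`ag $H$-valued modification $h$ of part (i): since $\int_{0}^{t}v^{*}\,ds$ is $V^{*}$-valued and continuous, $\int_{(0,t]}\zeta\,dF$ is $H$-valued and c\`adl\`ag, and $M$ is $H$-valued and continuous, the regularity reduces to knowing $v(t)\in H$ for a.e.\ $t$, which is exactly what the \cite{82gk}-type argument upgrades the weak $V^{*}$-identity to, yielding $\E[\sup_{t}\|h(t)\|_{H}^{2}]<\infty$ after localization.

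Before computing, I would localize by a sequence of stopping times so that $\int_{0}^{T}\|v^{*}\|_{*}^{2}\,ds$, $\int_{0}^{T}\|v\|_{V}^{2}\,ds$, $\langle M\rangle(T)$, $|F|_v(T)$ and $\int_{0}^{T}\|\zeta\|_{H}^{2}\,d|F|_v$ are all bounded; this turns $M$ into a genuine square-integrable martingale, makes every integral in \eqref{e:ito-lemma} well defined, and allows the stopping to be removed at the end by monotone passage. On each localized interval I would partition $[0,t]$ by $0=t_{0}<\cdots<t_{n}=t$ and expand the telescoping sum
\[
\|h(t)\|_{H}^{2}-\|h(0)\|_{H}^{2}=\sum_{i}\Big(2\langle h(t_{i}),\,h(t_{i+1})-h(t_{i})\rangle_{H}+\|h(t_{i+1})-h(t_{i})\|_{H}^{2}\Big),
\]
using the weak representation to rewrite each increment $h(t_{i+1})-h(t_{i})$ through $v^{*}$, $\zeta\,dF$ and $M$.

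As the mesh tends to zero, the left-endpoint cross terms $\sum_{i}2\langle h(t_{i}),h(t_{i+1})-h(t_{i})\rangle_{H}$ converge to $2\int_{0}^{t}\langle v^{*}(s),v(s)\rangle_{*}\,ds+2\int_{(0,t]}\langle h(s^{-}),\zeta(s)\rangle\,dF(s)+2\int_{0}^{t}\langle h(s),dM(s)\rangle_{H}$ (using $h(s^{-})=h(s)$ for Lebesgue-a.e.\ $s$ in the $ds$ term and It\^o's left-point prescription for the martingale term), while the sum of squared increments $\sum_{i}\|h(t_{i+1})-h(t_{i})\|_{H}^{2}$ converges to the quadratic variation $[h]_{t}=\langle M\rangle(t)+\int_{(0,t]}\|\zeta(s)\|_{H}^{2}\,\Delta F(s)\,dF(s)$, the cross variations between the continuous-drift, finite-variation and martingale parts all vanishing. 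Converting the left value $h(s^{-})$ to $h(s)$ in the finite-variation term replaces $2\int_{(0,t]}\langle h(s^{-}),\zeta\rangle\,dF$ by $2\int_{(0,t]}\langle h(s),\zeta\rangle\,dF-2\int_{(0,t]}\|\zeta\|_{H}^{2}\,\Delta F\,dF$ via the jump identity $\langle h(s^{-}),\zeta(s)\rangle=\langle h(s),\zeta(s)\rangle-\|\zeta(s)\|_{H}^{2}\,\Delta F(s)$; combining the resulting $-2\int\|\zeta\|_{H}^{2}\Delta F\,dF$ with the $+\int\|\zeta\|_{H}^{2}\Delta F\,dF$ coming from $[h]_{t}$ leaves precisely the single correction term $-\int_{(0,t]}\|\zeta\|_{H}^{2}\,\Delta F\,dF$ of \eqref{e:ito-lemma}, which is the characteristic signature of a finite-variation integrator with jumps.

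The main obstacle is the first step, namely establishing that the weakly defined $v$ admits an $H$-valued c\`adl\`ag modification with finite second moment of the supremum: a priori $v$ is controlled only through its $V^{*}$-pairings, and the discontinuous integrator $dF$ blocks a direct appeal to the continuous-case theory. I expect to handle this by spatial regularization (applying resolvents or spectral projections associated with the Gelfand triple to land in $V$, where $\|\cdot\|_{H}$ is classically meaningful), proving the identity for the regularized processes where \cite{82gk} together with the elementary finite-variation It\^o rule applies, and then passing to the limit. The delicate points are the uniform integrability needed to commute the limit with the stochastic and $dF$-integrals, and the verification that the regularization does not distort the jump-correction term; once these \emph{a priori} estimates are in hand, the localization removal and the Riemann-sum limits described above are routine.
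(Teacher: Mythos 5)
Your proposal is correct and takes essentially the same route as the paper: the paper's proof consists of citing \cite[Theorem 1]{82gk} and observing that the identical argument goes through once the c\`adl\`ag $H$-valued local martingale there is replaced by the semimartingale $\int_{(0,\cdot]}\zeta(s)\,dF(s)+M(t)$, and what you write out (localization, the telescoping Riemann sums, the regularization needed to produce the $H$-valued c\`adl\`ag modification, and the bookkeeping that turns $h(s^-)$ into $h(s)$ at the cost of the single correction $-\int_{(0,t]}\Vert\zeta(s)\Vert_H^2\Delta F(s)\,dF(s)$) is precisely a reconstruction of that argument, with the jump term computed correctly.
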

	
	\begin{proof}
		When $v(\cdot)$ is a $V$-valued process such that for each $\varphi\in V$,
		it holds for $dt\times dP$-almost all $(t,\omega)\in[0,T]\times \Omega$  that
		\begin{equation*}
			\left\langle v(t),\varphi \right\rangle _{H}=\int_{0}^{t}\left\langle
			v^{\ast }(s),\varphi \right\rangle _{\ast }ds+\left\langle N(t),\varphi
			\right\rangle _{H},
		\end{equation*}
		where $v^*(\cdot)$ is a $V^*$-valued process and $N(\cdot)$ is an $H$-valued
		c\`adl\`ag local martingale, the It\^o's formula was
		proved in \cite[Theorem 1]{82gk}. The desired result can be obtained by the same
		argument, with the $H$-valued
		c\`adl\`ag martingale $N(t)$ being replaced by the $H$-valued c\`adl\`ag
		semi-martingale $\int_{(0,t]} \zeta(s) dF(s) + M(t)$.
	\end{proof}
	
	\section{PSEEs and anticipated BSEEs}
	
	\label{sec:SDEE}
	
	\subsection{Path-dependent stochastic evolution equations}
	
	Let $K\geq 0$ be a fixed constant. For $t\in \lbrack -K ,0)$,
	we define $\mathcal{F}_{t}:=\mathcal{F}_{0}$. For a process $x(\cdot
	):[-K,T]\rightarrow H$ and $t\in[-K, T]$, its value at time $t$ is denoted by $x(t)$, and we denote $$
	x_t:=\big\{x (t\wedge r),\, r\in[-K, T]\big\}.$$
	
 Let
	\begin{equation*}
		A:[0,T]\times \Omega \rightarrow \mathcal{L}(V,V^{\ast }),\quad
		B:[0,T]\times \Omega \rightarrow \mathcal{L}(V,\mathcal{L}_{2}^{0})
	\end{equation*}	be (random) unbounded linear operators 
	and 
	\begin{equation*}
		b:[0,T]\times \Omega \times { C(-K,T;H)}\rightarrow H,\quad \sigma :[0,T]\times
		\Omega \times C(-K,T;H)\rightarrow \mathcal{L}_{2}^{0}
	\end{equation*}
be	nonlinear functions,  
	where $C(-K,T;H)$ denotes the space of continuous functions
	from $[-K,T]$ to $H$, endowed with the uniform norm $\|x\|_{C(-K,T;H)}=\sup_{t\in[-K,T]}\|x(t)\|_H$. 
	We consider the following path-dependent stochastic  evolution equation (PSEE) in $%
	(V,H,V^{\ast })$:
	\begin{equation}
		\left\{ \begin{aligned} dx(t)=& \big[A(t)x(t)+b(t,x_t)\big]dt
			+\big[B(t)x(t)+\sigma (t,x_t)\big]dw(t),\quad t\in \lbrack 0,T], \\ x(t)=&
			\gamma(t),\quad t\in \lbrack -K ,0], \end{aligned}\right.  \label{sdee-0}
	\end{equation}
	where $\gamma:[-K,0]\rightarrow H$ is
	the initial path. 
	
 Denote
	\[
	\mathcal X :=\Big\{x \ \text{is a process on}\ [-K,T]: x|_{[0,T]}\in L_{\mathbb{F}}^{2}(0 ,T;V) \ \text{and}\ x \in   C_{\mathbb{F}}^{2}(-K,T;H)\Big\},\]
	with norm
	\[
	\Vert x\Vert _{\mathcal X}:=\Big(\big\Vert x|_{[0,T]}\big\Vert^2_{L_{\mathbb{F}}^{2}(0,T;V)}
	+\big\Vert x\big\Vert^2_{C_{\mathbb{F}}^{2}(-K,T;H)}\Big)^{\frac12}
	\] and $x|_{[0,T]}$ denoting  the restriction of $x$ on $[0,T]$.

		Throughout the rest of the paper, we denote by $C$ a generic positive
	constant which may differ line by line.

	To get the existence and uniqueness of the solution, we
	impose the following conditions.
	
	\begin{itemize}
		\item[(A1)] For each $x\in C(-K,T;H)$, $b(\cdot ,\cdot ,x)$, $%
		\sigma (\cdot ,\cdot ,x)$ are progressively measurable. $b(\cdot ,\cdot
		,0)\in L_{\mathbb{F}}^{2}(0,T;H)$, $\sigma (\cdot ,\cdot ,0)\in L_{\mathbb{F}%
		}^{2}(0,T;\mathcal{L}_{2}^{0})$ and {$ \gamma(\cdot )\in 
		C(-K,0;H)$}.
		
		\item[(A2)] For each $u\in V,$ $A(\cdot ,\cdot )u$ and $B(\cdot
		,\cdot )u$ are progressively measurable. There exist $\alpha >0$ and $%
		\lambda \in \mathbb{R}$ such that for each $(t,\omega )\in \lbrack
		0,T]\times \Omega $, 
		\begin{equation*}
			2\left\langle A(t)u,u\right\rangle _{\ast }+\Vert B(t)u\Vert _{\mathcal{L}%
				_{2}^{0}}^{2}\leq -\alpha \Vert u\Vert _{V}^{2}+\lambda \Vert u\Vert
			_{H}^{2},\,\,\,\text{ for all }u\in V.
		\end{equation*}
		
		\item[(A3)] There exists a constant $K_{1}>0$ such that, for each $(t,\omega )\in \lbrack 0,T]\times \Omega $, 
		\begin{equation*}
			\Vert A(t)u\Vert _{\ast }\leq K_{1}\Vert u\Vert _{V},\,\,\,\text{ for all }%
			u\in V.
		\end{equation*}
		
		\item[(A4)] There exists a constant $L_1>0$ such that, for each
		$(t,\omega )\in \lbrack 0,T]\times \Omega $,
		\begin{align*}
			&  \Vert b(t,x_{t})-b(t,x_{t}^{\prime })\Vert
			_{H}^{2}+\Vert \sigma (t,x_{t})-\sigma (t,x_{t}^{\prime })\Vert _{\mathcal{L}
				_{2}^{0}}^{2}\\
			&\leq L_1\sup_{s\in[-K,t]}\Vert x(s)-x^{\prime }(s)\Vert _{H}^{2}, \,\,\, \text{ for all } x,x^{\prime } \in  C(-K,T;H).
		\end{align*}
	\end{itemize}
	
	Note that {(A2)} and {(A3)} yield 
	\begin{equation}
		\Vert B(t)u\Vert _{\mathcal{L}_{2}^{0}}\leq C_{1}\Vert u\Vert _{V},\text{
			for all }u\in V,  \label{Myeq2-41}
	\end{equation}
	where $C_{1}$ is a constant  depending only 
	on $\lambda$ and $K_{1}$. 
	
		\begin{definition}
		\label{def:SDEE} A process $x(\cdot )\in \mathcal X$ is
		called a solution to \eqref{sdee-0}, if for $dt\times dP$-almost all $%
		(t,\omega )\in \lbrack -K,T]\times \Omega ,$ it holds in $V^{\ast }$ that:
		\begin{equation*}
			\left\{ \begin{aligned} x(t)=&\,\,
				\gamma(0)+\int_{0}^{t}A(s)x(s)ds+\int_{0}^{t}b(s,x_s)ds \\ &
				+\int_{0}^{t}\big[B(s)x(s)+\sigma (s,x_s)\big]dw(s),\quad t\in \lbrack 0,T], \\ x(t)=&
				\,\,\gamma(t),\quad t\in \lbrack -K,0), \end{aligned}\right.
		\end{equation*}
		or equivalently, for $dt\times dP$-almost all $(t,\omega )\in \lbrack -K
		,T]\times \Omega $ and all $\varphi \in V$, the following holds
		\begin{equation*}
			\left\{ \begin{aligned} \left\langle x(t),\varphi \right\rangle _{H}=&\,\,
				\left\langle \gamma(0),\varphi \right\rangle _{H}+\int_{0}^{t}\left\langle
				A(s)x(s),\varphi \right\rangle _{\ast }ds+\int_{0}^{t}\left\langle
				b(s,x_s),\varphi \right\rangle _{H}ds \\ & +\int_{0}^{t}\big< \big[
				B(s)x(s)+\sigma (s,x_s)\big]dw(s),\varphi \big> _{H},\quad t\in \lbrack 0,T], \\
				x(t)=&\,\, \gamma(t),\quad t\in \lbrack -K ,0). \end{aligned}\right.
		\end{equation*}
	\end{definition}
	
	We  have the following \emph{a priori} estimate on the solution of PSEE.
	
	\begin{theorem}
		\label{estimate} Assume conditions {(A1)}-{(A4)} hold. Suppose that $x(\cdot )$
		is a solution to PSEE~\eqref{sdee-0}. Then
		\begin{equation} \label{es-x}
			\begin{aligned}
				& \mathbb{E}\Big[\sup\limits_{t\in[0,T]}\Vert x(t)\Vert _{H}^{2}\Big]+%
				\mathbb{E}\int_{0}^{T}\Vert x(t)\Vert _{V}^{2}dt   \\
				& \leq C\Big\{\E\Big[\sup_{t\in[-K,0]}\|\gamma(t)\|^2_H\Big]+
				\mathbb{E}\int_{0}^{T}\big( \Vert b(t,0)\Vert _{H}^{2}+\Vert
				\sigma (t,0)\Vert _{\mathcal{L}_{2}^{0}}^{2}\big) dt\Big\},
			\end{aligned}
		\end{equation}
		for some constant $C>0$ depending on $\lambda ,\alpha ,K_{1}$ and $L_1$.  Moreover, if $x^{\prime
		}(\cdot )$ is a solution to \eqref{sdee-0} with $(b,\sigma,\gamma)$ replaced by $(b^{\prime },\sigma ^{\prime
		},\gamma')$, then 
		\begin{equation} \label{es-difference}
			\begin{aligned}
				& \mathbb{E}\Big[\sup\limits_{t\in[0,T]}\Vert x(t)-x^{\prime }(t)\Vert
				_{H}^{2}\Big]+\mathbb{E}\int_{0}^{T}\Vert x(t)-x^{\prime }(t)\Vert _{V}^{2}dt
				\\
				& \leq C\Big\{\E\Big[\sup_{t\in[-K,0]}\|\gamma(t)-\gamma'(t)\|^2_H\Big]+\mathbb{E}\int_{0}^{T}\Vert b(t,x_{t}^{\prime })-b^{\prime
				}(t,x_{t}^{\prime })\Vert _{H}^{2}dt \\&\hspace{4em}+\mathbb{E}\int_{0}^{T}\Vert \sigma
				(t,x_{t}^{\prime })-\sigma ^{\prime }(t,x_{t}^{\prime })\Vert _{\mathcal{L}
					_{2}^{0}}^{2}dt\Big\}. 
			\end{aligned}
		\end{equation}
	\end{theorem}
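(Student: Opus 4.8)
The plan is to apply the variational It\^o formula of Lemma~\ref{Ito-lemma} (with the finite-variation datum absent, i.e. the classical form of \cite{82gk}) to $\|x(t)\|_H^2$, taking as drift $v^\ast(s)=A(s)x(s)+b(s,x_s)$ and as martingale part $M(t)=\int_0^t[B(s)x(s)+\sigma(s,x_s)]\,dw(s)$, so that $\langle M\rangle(t)=\int_0^t\|B(s)x(s)+\sigma(s,x_s)\|_{\mathcal L_2^0}^2ds$. With $x(0)=\gamma(0)$, the non-martingale part of the resulting identity is driven by the integrand $2\langle A(s)x(s),x(s)\rangle_\ast+\|B(s)x(s)+\sigma(s,x_s)\|_{\mathcal L_2^0}^2+2\langle b(s,x_s),x(s)\rangle_H$. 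I would first split off the cross term $2\langle B(s)x(s),\sigma(s,x_s)\rangle_{\mathcal L_2^0}$ by Young's inequality and use \eqref{Myeq2-41}, so that after combining with the coercivity bound {(A2)} the worst $V$-norm contribution is $-\tfrac{\alpha}{2}\|x(s)\|_V^2$; the remaining terms are bounded, via {(A4)} and $2\langle b,x\rangle_H\le\|b\|_H^2+\|x\|_H^2$, by $C\sup_{r\in[-K,s]}\|x(r)\|_H^2+C(\|b(s,0)\|_H^2+\|\sigma(s,0)\|_{\mathcal L_2^0}^2)$. Since $x\in\mathcal X$ makes $B(\cdot)x+\sigma(\cdot,x_\cdot)$ lie in $L^2_\mathbb{F}(0,T;\mathcal L_2^0)$, I would localize by stopping times to legitimize the manipulations below, passing to the limit by Fatou and monotone convergence.

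I would then run a two-step Gronwall argument. In Step~A, taking plain expectations annihilates the martingale, and after bounding $\sup_{r\in[-K,s]}\|x(r)\|_H^2\le\sup_{r\in[-K,0]}\|\gamma(r)\|_H^2+\sup_{r\in[0,s]}\|x(r)\|_H^2$, one obtains $\tfrac{\alpha}{2}\,\mathbb{E}\int_0^t\|x(s)\|_V^2\,ds\le C\int_0^t\Phi(s)\,ds+C\cdot\mathrm{Data}$, where $\Phi(\tau):=\mathbb{E}\sup_{s\in[0,\tau]}\|x(s)\|_H^2$ and $\mathrm{Data}$ denotes the right-hand side of \eqref{es-x}; this bounds $\Psi(\tau):=\mathbb{E}\int_0^\tau\|x(s)\|_V^2\,ds$ in terms of $\Phi$. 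In Step~B, I take the supremum over $t\in[0,\tau]$ and then expectations; the stochastic term is handled by the Burkholder-Davis-Gundy inequality together with the quadratic-variation bound \eqref{e:QV}, and a further Young inequality splits it as $\tfrac12\Phi(\tau)+C\,\mathbb{E}\int_0^\tau\|B(s)x(s)+\sigma(s,x_s)\|_{\mathcal L_2^0}^2ds$. Absorbing $\tfrac12\Phi(\tau)$ into the left-hand side and estimating the last integral by $C\Psi(\tau)+C\cdot\mathrm{Data}$ (again via \eqref{Myeq2-41} and {(A4)}), I substitute the Step~A bound for $\Psi(\tau)$ to reach $\Phi(\tau)\le C\int_0^\tau\Phi(s)\,ds+C\cdot\mathrm{Data}$. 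Gronwall's lemma then gives $\Phi(T)\le C\cdot\mathrm{Data}$, and feeding this back into Step~A yields the $V$-norm bound, completing \eqref{es-x}.

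The key obstacle is precisely the $V$-norm term produced by the Burkholder-Davis-Gundy estimate: because the coercive quantity $\tfrac{\alpha}{2}\int_0^t\|x\|_V^2ds$ is discarded when one passes to the supremum in $H$, that term cannot be absorbed on the spot, and it is the Step~A bound for $\Psi$ in terms of $\Phi$ that decouples the two norms and lets the Gronwall loop close. The stability estimate \eqref{es-difference} follows along identical lines applied to $\bar x:=x-x'$, which solves the same type of equation with operators $A,B$, initial path $\gamma-\gamma'$, and nonlinearities $b(\cdot,x_\cdot)-b'(\cdot,x'_\cdot)$ and $\sigma(\cdot,x_\cdot)-\sigma'(\cdot,x'_\cdot)$. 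Here {(A2)} applies verbatim to $\bar x$, while the decomposition $b(\cdot,x_\cdot)-b'(\cdot,x'_\cdot)=[b(\cdot,x_\cdot)-b(\cdot,x'_\cdot)]+[b(\cdot,x'_\cdot)-b'(\cdot,x'_\cdot)]$ (and likewise for $\sigma$) sends the Lipschitz part, controlled by {(A4)} through $\sup_{r\in[-K,\cdot]}\|\bar x(r)\|_H^2$, into the Gronwall loop, and leaves $\mathbb{E}\int_0^T\|b(t,x'_t)-b'(t,x'_t)\|_H^2dt$ and $\mathbb{E}\int_0^T\|\sigma(t,x'_t)-\sigma'(t,x'_t)\|_{\mathcal L_2^0}^2dt$ as the source terms on the right-hand side of \eqref{es-difference}.
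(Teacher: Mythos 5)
Your proposal is correct and follows essentially the same route as the paper's proof: Itô's formula for $\|\cdot\|_H^2$, coercivity from (A2) plus Young's inequality and \eqref{Myeq2-41} to retain $-\tfrac{\alpha}{2}\int\|\cdot\|_V^2$, a first expectation step bounding $\mathbb{E}\int_0^\tau\|\cdot\|_V^2$ in terms of $\mathbb{E}\sup\|\cdot\|_H^2$ (the paper's \eqref{es:normV}), then BDG with \eqref{e:QV} and Grönwall. The only cosmetic difference is order of presentation: the paper proves \eqref{es-difference} first and obtains \eqref{es-x} as the special case $\gamma'\equiv 0$, $b'\equiv 0$, $\sigma'\equiv 0$, whereas you argue in the reverse direction.
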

\begin{proof}
	We shall prove the estimate \eqref{es-difference},  and
	\eqref{es-x} follows from \eqref{es-difference} with $\gamma ^{\prime
	}\equiv 0,$ $b^{\prime }\equiv 0,\sigma ^{\prime }\equiv 0$. 
	
	To simplify the
	notations, we denote 
	\begin{equation*}
		\hat{x}(t)=x(t)-x^{\prime }(t),\text{ for }t\in \lbrack -K,T].
	\end{equation*}%
	It is easy to show that  $\big\{b(t,\tilde{x}_{t})\big\}_{t\in \lbrack 0,T]}\in L_{\mathbb{F}}^{2}(0,T;H)$ and $
	\big\{\sigma (t,\tilde{x}_{t})\big\}_{t\in \lbrack 0,T]}\in L_{\mathbb{F}
	}^{2}(0,T;\mathcal{L}_{2}^{0})$ for $\tilde{x}=x,x^{\prime }\in {\mathcal X}$, by the assumptions on $b$ and $\sigma$. Applying It\^{o}'s formula 
	\eqref{e:ito-lemma} to $\Vert \hat{x}(t)\Vert _{H}^{2}$ on $[0,T]$, we have 
	for $t\in \lbrack 0,T]$, 
	\begin{align*}
		& \Vert \hat{x}(t)\Vert _{H}^{2}-\Vert \hat{x}(0)\Vert
		_{H}^{2}=2\int_{0}^{t}\left\langle A(s)\hat{x}(s),\hat{x}(s)\right\rangle
		_{\ast }ds \\
		& \hspace{2em}+2\int_{0}^{t}\left\langle b(s,x_{s})-b^{\prime
		}(s,x_{s}^{\prime }),\hat{x}(s)\right\rangle _{H}ds \\
		& \hspace{2em}+2\int_{0}^{t}\left\langle \big[B(s)\hat{x}(s)+\sigma
		(s,x_{s})-\sigma ^{\prime }(s,x_{s}^{\prime })\big]dw(s),\hat{x}(s)\right\rangle
		_{H} \\
		& \hspace{2em}+\int_{0}^{t}\big\Vert B(s)\hat{x}(s)+\sigma (s,x_{s})-\sigma
		^{\prime }(s,x_{s}^{\prime })\big\Vert _{\mathcal{L}_{2}^{0}}^{2}ds.
	\end{align*}
	
	By  (A2), we get
	\begin{align*}
		\Vert \hat{x}(t)\Vert _{H}^{2}& \leq \Vert \hat{x}(0)\Vert _{H}^{2}-\alpha
		\int_{0}^{t}\Vert \hat{x}(s)\Vert _{V}^{2}ds+(\lambda +1)\int_{0}^{t}\Vert 
		\hat{x}(s)\Vert _{H}^{2}ds \\
		& \hspace{1em}+\int_{0}^{t}\Vert b(s,x_{s})-b^{\prime }(s,x_{s}^{\prime
		})\Vert _{H}^{2}ds+\int_{0}^{t}\Vert \sigma (s,x_{s})-\sigma ^{\prime
		}(s,x_{s}^{\prime })\Vert _{\mathcal{L}_{2}^{0}}^{2}ds \\
		& \hspace{1em}+2\int_{0}^{t}\big<B(s)\hat{x}(s),\sigma (s,x_{s})-\sigma
		^{\prime }(s,x_{s}^{\prime })\big>_{\mathcal{L}_{2}^{0}}ds \\
		& \hspace{1em}+2\int_{0}^{t}\left\langle \big[B(s)\hat{x}(s)+\sigma
		(s,x_{s})-\sigma ^{\prime }(s,x_{s}^{\prime })\big]dw(s),\hat{x}(s)\right\rangle
		_{H}.
	\end{align*}
	Then by (A4), \eqref{Myeq2-41}, the  triangular
	inequality, and the fact $2ab\leq pa^{2}+b^{2}/p$ for $p>0$, we get
	\begin{equation} \label{Myeq2-3}
		\begin{aligned}
			& \Vert \hat{x}(t)\Vert _{H}^{2}\leq \Vert \hat{x}(0)\Vert _{H}^{2}-\alpha
			\int_{0}^{t}\Vert \hat{x}(s)\Vert _{V}^{2}ds+(\lambda +1)\int_{0}^{t}\Vert 
			\hat{x}(s)\Vert _{H}^{2}ds   \\
			& \hspace{1em}+4L_{1}\int_{0}^{t}\sup_{r\in \lbrack -K,s]}\Vert \hat{x}(r)\Vert
			_{H}^{2}ds+2\int_{0}^{t}\Vert b(s,x_{s}^{\prime })-b^{\prime
			}(s,x_{s}^{\prime })\Vert _{H}^{2}ds \\
			& \hspace{1em}+\Big(2+\frac{4(C_{1})^{2}}{\alpha }\Big)\int_{0}^{t}\Vert
			\sigma (s,x_{s}^{\prime })-\sigma ^{\prime }(s,x_{s}^{\prime })\Vert _{%
				\mathcal{L}_{2}^{0}}^{2}ds+\frac{\alpha }{2}\int_{0}^{t}\Vert \hat{x}%
			(s)\Vert _{V}^{2}ds  \\
			& \hspace{1em}+2\int_{0}^{t}\left\langle \big[B(s)\hat{x}(s)+\sigma
			(s,x_{s})-\sigma ^{\prime }(s,x_{s}^{\prime })\big]dw(s),\hat{x}(s)\right\rangle
			_{H}  \\
			& \leq \Vert \hat{x}(0)\Vert _{H}^{2}-\frac{\alpha }{2}\int_{0}^{t}\Vert 
			\hat{x}(s)\Vert _{V}^{2}ds +C\int_{0}^{t}\sup_{r\in \lbrack -K,s]}\Vert \hat{x}(r)\Vert
			_{H}^{2}ds \\
			& \hspace{1em}+C\int_{0}^{t}\Vert b(s,x_{s}^{\prime })-b^{\prime
			}(s,x_{s}^{\prime })\Vert _{H}^{2}ds   +C\int_{0}^{t}\Vert \sigma (s,x_{s}^{\prime })-\sigma ^{\prime
			}(s,x_{s}^{\prime })\Vert _{\mathcal{L}_{2}^{0}}^{2}ds\\
			& \hspace{1em}+2\int_{0}^{t}\left\langle \big[B(s)\hat{x}(s)+\sigma (s,x_{s})-\sigma
			^{\prime }(s,x_{s}^{\prime })\big]dw(s),\hat{x}(s)\right\rangle _{H}.
		\end{aligned}
	\end{equation}
	Taking expectation on both sides of \eqref{Myeq2-3}, we  get 
	\begin{equation}\label{es:normV}
		\begin{aligned} 
			&\E\int_0^t\|\hat x(s)\|_V^2ds\leq C\E\Big\{\Vert\hat{x}(0)\Vert _{H}^{2} +\int_{0}^{t}\sup_{r\in \lbrack -K,s]}\Vert \hat{x}(r)\Vert
			_{H}^{2}ds\\&\hspace{3em} +\int_{0}^{t}\Vert b(s,x_{s}^{\prime })-b^{\prime
			}(s,x_{s}^{\prime })\Vert _{H}^{2}ds  +\int_{0}^{t}\Vert \sigma (s,x_{s}^{\prime })-\sigma ^{\prime
			}(s,x_{s}^{\prime })\Vert _{\mathcal{L}_{2}^{0}}^{2}ds\Big\}\\&\leq C\E\Big\{\sup_{t\in \lbrack -K,0]}\Vert \hat{\gamma}
			(t)\Vert _{H}^{2}+\int_{0}^{t}\sup_{r\in \lbrack 0,s]}\Vert \hat{x}
			(r)\Vert _{H}^{2}ds+\int_{0}^{t}\Vert b(s,x_{s}^{\prime
			})-b^{\prime }(s,x_{s}^{\prime })\Vert _{H}^{2}ds \\
			& \hspace{6em}+\int_{0}^{t}\Vert \sigma (s,x_{s}^{\prime })-\sigma
			^{\prime }(s,x_{s}^{\prime })\Vert _{\mathcal{L}_{2}^{0}}^{2}ds\Big\}.
		\end{aligned}
	\end{equation}
	Moreover, it follows from \eqref{e:QV} and Burkholder-Davis-Gundy inequality that 
	\begin{equation}\label{e:3}
		\begin{aligned}
			& \mathbb{E}\Big[\sup\limits_{t\in[0,T]}\int_{0}^{t}\left\langle \big[B(s)%
			\hat{x}(s)+\sigma (s,x_{s})-\sigma ^{\prime }(s,x_{s}^{\prime })\big]dw(s),\hat{x%
			}(s)\right\rangle _{H}\Big] \\
			& \leq C\mathbb{E}\Big( \int_{0}^{T}\big\Vert B(t)\hat{x}(t)+\sigma
			(t,x_{t})-\sigma ^{\prime }(t,x_{t}^{\prime })\big\Vert _{\mathcal{L}%
				_{2}^{0}}^{2}\Vert \hat{x}(t)\Vert _{H}^{2}dt\Big) ^{\frac{1}{2}} \\
			& \leq \frac{1}{4}\mathbb{E}\Big[\sup\limits_{t\in[0,T]}\Vert \hat{x}%
			(t)\Vert _{H}^{2}\Big]+{C}\mathbb{E}\int_{0}^{T}\big(\Vert B(t)\hat{x}%
			(t)\Vert _{\mathcal{L}_{2}^{0}}^{2}+\Vert \sigma (t,x_{t})-\sigma ^{\prime
			}(t,x_{t}^{\prime })\Vert _{\mathcal{L}_{2}^{0}}^{2}\big)dt\\
			& \leq \frac{1}{4}\mathbb{E}\Big[\sup\limits_{t\in[0,T]}\Vert \hat{x}%
			(t)\Vert _{H}^{2}\Big]+{C}\mathbb{E}\int_{0}^{T}\big(\Vert \hat{x}
			(t)\Vert _{V}^{2}+\Vert \sigma (t,x_{t})-\sigma ^{\prime
			}(t,x_{t}^{\prime })\Vert _{\mathcal{L}_{2}^{0}}^{2}\big)dt,
		\end{aligned}
	\end{equation}
	where the last step follows from \eqref{Myeq2-41}. 
	Then, taking supremum over $t\in \lbrack 0,\tau]$ for $\tau\in(0,T]$ and  taking expectation on both sides
	of (\ref{Myeq2-3}),  we have, in view of the estimates  \eqref{es:normV} and \eqref{e:3}, 
	\begin{align*}
		& \mathbb{E}[\sup_{t\in \lbrack 0,\tau]}\Vert \hat{x}(t)\Vert _{H}^{2}]+\mathbb{
			E}\int_{0}^{\tau}\Vert \hat{x}(s)\Vert _{V}^{2}ds \\
		& \leq C\Big\{\mathbb{E}\Big[\sup_{r\in \lbrack -K,0]}\Vert \hat{\gamma}
		(r)\Vert _{H}^{2}\Big]+\int_{0}^{\tau}\E\Big[\sup_{r\in \lbrack 0,s]}\Vert \hat{x}
		(r)\Vert _{H}^{2}\Big]ds+\mathbb{E}\int_{0}^{\tau}\Vert b(s,x_{s}^{\prime
		})-b^{\prime }(s,x_{s}^{\prime })\Vert _{H}^{2}ds \\
		& \hspace{6em}+\mathbb{E}\int_{0}^{\tau}\Vert \sigma (s,x_{s}^{\prime })-\sigma
		^{\prime }(s,x_{s}^{\prime })\Vert _{\mathcal{L}_{2}^{0}}^{2}ds\Big\}, 
	\end{align*}
	and the desired \eqref{es-difference} follows from  the Gr\"{o}nwall's inequality.

\end{proof}
Now we are ready to prove the well-posedness of PSEE \eqref{sdee-0}. 

\begin{theorem}
	\label{exu} Assuming (A1)-(A4), PSEE~\eqref{sdee-0} admits a unique
	solution in {$\mathcal X$} in the sense of Definition \ref{def:SDEE}. 
\end{theorem}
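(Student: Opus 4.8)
The plan is to build the solution by a Picard-type iteration that decouples the path dependence from the unbounded linear part, and to use the a priori difference estimate \eqref{es-difference} to drive the convergence. Uniqueness is immediate: if $x,x'\in\mathcal X$ both solve \eqref{sdee-0} with the same data, then applying \eqref{es-difference} with $(b',\sigma',\gamma')=(b,\sigma,\gamma)$ makes the entire right-hand side vanish, whence $x=x'$ in $\mathcal X$. So the real work is existence.

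First I would record the linear base case: for fixed $\phi\in L_{\mathbb F}^2(0,T;H)$, $\psi\in L_{\mathbb F}^2(0,T;\mathcal L_2^0)$ and initial datum $\gamma(0)\in H$, the linear SEE $dx(t)=[A(t)x(t)+\phi(t)]dt+[B(t)x(t)+\psi(t)]dw(t)$ with $x(0)=\gamma(0)$ admits a unique solution in $\mathcal X$. This is the classical variational well-posedness in the Gelfand triple $(V,H,V^\ast)$ under the coercivity (A2), the boundedness (A3) and the consequent bound \eqref{Myeq2-41} on $B$, obtained by Galerkin approximation together with the monotonicity/energy method; see \cite{DZ92,PR07}. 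This defines a solution map $\Gamma$ that sends a process $\xi\in C_{\mathbb F}^2(-K,T;H)$ agreeing with $\gamma$ on $[-K,0]$ to the solution $x=\Gamma(\xi)\in\mathcal X$ of the linear equation with forcing $\phi=b(\cdot,\xi_\cdot)$, $\psi=\sigma(\cdot,\xi_\cdot)$; by (A1) and (A4) these forcings indeed lie in the required $L^2$ spaces, so $\Gamma$ is well defined.

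Next I would run the iteration starting from $x^0(t)=\gamma(t)$ for $t\le 0$ and $x^0(t)=\gamma(0)$ for $t\ge 0$, and $x^{n+1}=\Gamma(x^n)$, so that $x^n\in\mathcal X$ for $n\ge 1$. Writing $\hat x^n:=x^{n+1}-x^n$, which vanishes on $[-K,0]$, and subtracting the two linear equations, the Itô computation underlying the proof of Theorem~\ref{estimate} gives, for every $\tau\in(0,T]$, a localized bound of the form $g_n(\tau)+\mathbb E\int_0^\tau\|\hat x^n\|_V^2\,ds\le \tilde C\int_0^\tau g_{n-1}(s)\,ds$, where $g_n(\tau):=\mathbb E[\sup_{r\in[0,\tau]}\|\hat x^n(r)\|_H^2]$; here the self-term $\int_0^\tau g_n$ produced by the $(\lambda+1)\|\hat x^n\|_H^2$ contribution is absorbed by Grönwall's inequality, and the forcing differences are bounded through the Lipschitz condition (A4) by $\int_0^\tau g_{n-1}(s)\,ds$. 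Iterating yields the factorial bound $g_n(T)\le M(\tilde C T)^n/n!$, which is summable, and the same estimate controls the $V$-norm of $\hat x^n$; hence $\{x^n\}$ is Cauchy in $C_{\mathbb F}^2(-K,T;H)\cap L_{\mathbb F}^2(0,T;V)$ and converges to some $x\in\mathcal X$.

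Finally I would verify that the limit solves \eqref{sdee-0}: by (A4) and the uniform-in-time convergence, $b(\cdot,x^n_\cdot)\to b(\cdot,x_\cdot)$ in $L_{\mathbb F}^2(0,T;H)$ and $\sigma(\cdot,x^n_\cdot)\to\sigma(\cdot,x_\cdot)$ in $L_{\mathbb F}^2(0,T;\mathcal L_2^0)$, so passing to the limit in the variational identity of Definition~\ref{def:SDEE} for $x^{n+1}=\Gamma(x^n)$ — using the weak convergence in $L^2(0,T;V)$ for the $A$-term together with (A3), and the Itô isometry for the stochastic integral — shows that $x$ satisfies the equation, with uniqueness already settled above. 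I expect the main obstacle to be precisely the linear base case: it is there that the coercivity balance (A2) between $A$ and the diffusion operator $B$ is essential and where the $C$-in-$H$ and $L^2$-in-$V$ regularity encoded in $\mathcal X$ is actually produced; once the linear well-posedness and \eqref{es-difference} are in hand, the contraction/iteration argument is routine.
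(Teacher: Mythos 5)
Your proof is correct and follows essentially the same route as the paper: both reduce existence to the classical variational well-posedness of the linear SEE without delay (the paper cites Krylov--Rozovskii for this step) and then obtain the fixed point of the resulting solution map via the difference estimate \eqref{es-difference}, with uniqueness read off from that same estimate. The only difference is cosmetic — you run a Picard iteration with a Gr\"onwall-produced factorial bound on the whole interval, whereas the paper proves the map is a contraction on a small time interval and concatenates.
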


\begin{proof}
	Given  any fixed $X(\cdot)\in \mathcal X$ satisfying $X(t)=\gamma(t), t\in[-K,0]$, the following   linear SEE without delay
	\begin{equation*}
		\left\{ \begin{aligned}
			dx(t)=&\big[A(t)x(t)+b(t, X_t)\big]dt+\big[B(t)x(t)+\sigma(t,X_t)\big]dw(t),\quad t\in[0,T],
			\\ x(t)=&\gamma(t),\quad t\in[-K,0],\end{aligned}\right. 
	\end{equation*}
	has a unique solution  in {$\mathcal X$} by \cite{81k}. Thus, this  equation  defines a mapping $
	\mathbb{I}:  \mathcal X\to {\mathcal X}$  by $\mathbb{I}(X)=x$.

	For $X(\cdot),X^{\prime }(\cdot)\in {\mathcal X}$, we denote, for $t\in [-K,T]$,
	 \[
	 \mathbb{I}
	 (X^{\prime })=x^{\prime },\ \mathbb{I}(X)=x\ \text{and} \ \hat X(t)=X(t)-X'(t),\ \hat x(t)=x(t)-x'(t).
	 \]
	Obviously, $\hat{x}(\cdot)$ satisfies the following equation 
	\begin{equation*}
		\left\{ \begin{aligned} d\hat x(t)=&\big[A(t)\hat
			x(t)+b(t,X_t)-b(t,X'_t)\big]dt\\&+\big[B(t)\hat
			x(t)+\sigma(t,X_t)-\sigma(t,X'_t)\big]dw(t),\quad t\in[0,T],\\\hat x(t)=&0,\quad t\in[-K,0]. \end{aligned}%
		\right. 
	\end{equation*}%
	Then it follows from the \emph{a priori} estimate \eqref{es-difference} and (A4) that 
	\begin{align*}
		& \mathbb{E}\Big[\sup\limits_{t\in[0,T]}\Vert \hat{x}(t)\Vert _{H}^{2}%
		\Big]+\E\int_0^T\|\hat x(t)\|^2_Vdt\\&\leq C\mathbb{E}\int_{0}^{T}\big(\Vert b(t,X_{t})-b(t,X_{t}^{\prime
		})\Vert _{H}^{2}+\Vert \sigma (t,X_{t})-\sigma (t,X_{t}^{\prime })\Vert _{%
			\mathcal{L}_{2}^{0}}^{2}\big)dt \\
		& \leq 2CL_1\int_{0}^{T}\Vert \hat{X}(t)\Vert _{H}^{2}dt \\
		& \leq 2CL_1T\mathbb{E}\Big[\sup\limits_{t\in[0,T]}\Vert \hat{X}(t)\Vert
		_{H}^{2}\Big]\\&\leq 2CL_1T\Big\{\mathbb{E}\Big[\sup\limits_{t\in[0,T]}\Vert \hat{X}(t)\Vert
		_{H}^{2}\Big]+\E\int_0^T \|\hat {X}(t)\|_V^2dt\Big\},
	\end{align*}%
	where $C>0$ depends only on $\lambda ,\alpha ,K_{1}$ and $K$. Then for  $T <
	\frac{1}{2CL_1}$, $\mathbb{I}$ is a contraction  on $\mathcal X$, and hence has a unique fixed point $x(\cdot)\in \mathcal X$ which is the unique solution to
	\begin{equation*}\label{SDEE-6}
		\left\{ \begin{aligned}
			dx(t)=&\big[A(t)x(t)+b(t, x_t)\big]dt+\big[B(t)x(t)+\sigma(t,x_t)\big]dw(t), \quad t\in[0,T],
			\\ x(t)=&\gamma(t),\quad t\in[-K,0]. \end{aligned}\right. 
	\end{equation*}
	For general $T>0$, we may repeat the above procedure to obtain the well-posedness.  
\end{proof}
\begin{remark}\label{rem:A4'}
	Via similar arguments, Theorem \ref{exu} remains valid if condition {(A4)} is replaced by 
	\begin{itemize}
		\item[({A4$\hspace{0.3mm}'$})] There exists a constant $L_1>0$ such that, for each $(t,\omega )\in \lbrack 0,T]\times \Omega$, 
		\begin{align*}
			& \int_0^t\Big( \Vert b(s,x_{s})-b(s,x_{s}^{\prime })\Vert
			_{H}^{2}+\Vert \sigma (s,x_{s})-\sigma (s,x_{s}^{\prime })\Vert _{\mathcal{L}
				_{2}^{0}}^{2}\Big)ds\\
			& \leq L_1\int_{-K}^t\Vert x(s)-x^{\prime }(s)\Vert _{H}^{2}ds,
		\end{align*}
		holds for any 
		$x,x^{\prime } \in { C(-K,T;H)}$.
	\end{itemize}
\end{remark}

\begin{remark}
	Compared with the results in \cite{TGR02,2019m}, our SEE \eqref{sdee-0} contains an unbounded operator $B$ in the diffusion
	term.
\end{remark}

\subsection{Anticipated backward stochastic evolution equations}
In this subsection, we study the well-posedness of \emph{anticipated
	backward stochastic evolution equations} (ABSEEs) with a
running terminal.  It will be used to
describe the adjoint equation in the derivation of the maximum
principle.

Let $\mathcal{M}:[0,T]\times \Omega \rightarrow \mathcal{L}(V,V^{\ast
}),\,\,\,\mathcal{N}:[0,T]\times \Omega \rightarrow \mathcal{L}(\mathcal{L}
_{2}^{0},V^{\ast })$ be unbounded linear
 operators  and  $g: \lbrack
0,T] \times\Omega \times {L_{\mathbb{F}}^{2}(0,T+K;H)\times L_{\mathbb{F}}^{2}(0,T+K;%
	\mathcal{L}_{2}^{0})}\rightarrow H$ be a generator function. Let $F$ be a real-valued adapted process on $[0,T]$ with
finite variation (and hence $dF$ induces a random signed measure on $[0,T]$).  For a function $a(\cdot
): [0,T+K]\rightarrow E,$  we denote, for $t\in[0,T]$,
$$a_{t+}=\big\{a (t\vee r),\, r\in [0, T+K]\big\}.$$

We aim to study the following ABSEE 
\begin{equation}
	\left\{ \begin{aligned} p(t)=&\,\,
		\xi(T)+\int_{(t,T]}\zeta(s)dF(s)+\int_t^T\Big\{\mathcal{M}(s)p(s)+%
		\mathcal{N}(s)q(s)\\
		&+\mathbb{E}^{\mathcal{F}_{s}}\big[g(s,p_{s+},q_{s+})\big]\Big\}ds
		-\int_t^Tq(s)dw(s),\quad t\in \lbrack 0,T], \\ p(t)=&\,\,
		\xi(t),\,q(t)=\eta(t),\quad {t\in (T,T+K]}, \end{aligned}\right.
	\label{absee-0}
\end{equation}
where processes $\xi $, $\zeta $ and $\eta $ are terminal conditions acting on $
[T,T+K]$, $(0,T]$ and $(T,T+K]$, respectively. 
The term  $\int_{(t,T]} \zeta(s) dF(s)$, known as the running terminal condition, makes  ABSEE \eqref{absee-0} distinct from the classical situation, in particular when $dF$ is not absolutely continuous with  respect to the Lebesgue measure. 

 We denote
\begin{align*}	\mathscr P:=&\Big\{p \ \text{is a process on}\ [0,T+K]: p|_{[0,T]}\in L_{\mathbb{F}}^{2}(0 ,T;V) \\&\hspace{2em} \text{and}\ p|_{[T,T+K]} \in   L_{\mathbb{F}}^{2}(T,T+K;H)\Big\},
\end{align*}
with norm
\[
\Vert p\Vert_{\mathscr P}:=\Big(\big\Vert p|_{[0,T]}\big\Vert^2_{L_{\mathbb{F}}^{2}(0,T;V)}
+\big\Vert p|_{[T,T+K]}\big\Vert^2_{L_{\mathbb{F}}^{2}(T,T+K;H)}\Big)^{\frac12}.
\]

To obtain the existence and uniqueness of {the solution} to \eqref{absee-0}, we
impose the following conditions.

\begin{itemize}
	\item[({B1})] For each $(p,q)\in {L^{2}(0,T+K; H)\times L^{2}(0,T+K;%
		\mathcal{L}_{2}^{0})},$ $g(\cdot ,\cdot ,p,q)$ is a measurable function; $g(\cdot ,\cdot ,0,0)\in L_{\mathbb{F}}^{1,2}(0,T;H)$.  ${\xi \in  L^{2}_{\mathbb{F}}(T,T+K;H})$ and $\xi(T)\in L^{2}(\mathcal{F}_T;H)$, $\zeta \in
	L_{\mathbb{F},F}^{2}(0,T;H)$ and $\eta \in {L^{2}_{\mathbb{F}}(T,T+K;\mathcal{L}_{2}^{0})}
	$ with $L_{\mathbb{F}}^{1,2}(0,T;H)$ being the space of $H$-valued
	progressively measurable processes $\phi (\cdot )$ with norm 
	\begin{equation*}
		\Vert \phi \Vert _{L_{\mathbb{F}}^{1,2}(0,T;H)}=\Big( \mathbb{\mathbb{E}}%
		\Big[ \Big( \int_{0}^{T}\Vert \phi (t)\Vert _{H}{d}t\Big) ^{2}\Big]
		\Big) ^{\frac{1}{2}}.
	\end{equation*}
	
	\item[({B2})] For each $v\in V,$ $\mathcal{M}(\cdot ,\cdot )v$ and $%
	\mathcal{N}(\cdot ,\cdot )v$ are progressively measurable. There exist
	constants $\alpha >0$ and $\lambda \in \mathbb{R}$ such that for each $%
	(t,\omega )\in \lbrack 0,T]\times \Omega $, 
	\begin{equation*}
		2\left\langle \mathcal{M}(t)v,v\right\rangle _{\ast }+\Vert \mathcal{N}
		^{\ast }(t)v\Vert _{\mathcal{L}_{2}^{0}}^{2}\leq -\alpha \Vert v\Vert
		_{V}^{2}+\lambda \Vert v\Vert _{H}^{2},\ \text{for all }v\in V,
	\end{equation*}
	where $\mathcal{N}^{\ast }\in \mathcal{L}(V,\mathcal{L}_{2}^{0})$ is the
	adjoint operator of $\mathcal{N}\in \mathcal{L}(\mathcal{L}_{2}^{0},V^{\ast
	})$.
	
	\item[({B3})] There exists a constant $K_{2}>0$ such that for each $(t,\omega )\in \lbrack 0,T]\times \Omega $, 
	\begin{equation*}
		\Vert \mathcal{M}(t)v\Vert _{\ast }\leq K_{2}\Vert v\Vert _{V},\ \text{for all }v\in V.
	\end{equation*}
	
	\item[({B4})] There exists a positive constant $L_2$ such that for each $(t,\omega )\in \lbrack 0,T]\times \Omega $,
	\begin{align*}
		& \int_{t}^{T}\big\Vert g(s,p_{s+},q_{s+})-g(s,p_{s_{+}}^{\prime
		},q_{s_{+}}^{\prime })\big\Vert _{H}^{2}ds \\
		& \leq  L_2\Big\{\int_{t}^{T+K}\Vert p(s)-p^{\prime }(s)\Vert^{2} _{H}ds+\int_{t}^{T+K}\Vert
			q(s)-q^{\prime }(s)\Vert _{\mathcal{L}_{2}^{0}}^{2}ds\Big\},
	\end{align*}
	for all $(p,q),(p^{\prime },q^{\prime })\in  L^2(0,T+K;H)\times L^2(0,T+K;\mathcal{L}_{2}^{0}).$

	\item[({B5})] The total variation $|F|_{v}$ of $F$ on $[0,T]$ is 
	bounded by  a constant $K_{F}$.
\end{itemize}

Similar to \eqref{Myeq2-41}, (B2) and (B3) yield 
\begin{equation}
	\Vert \mathcal{N}(t)v\Vert_{V^\ast}\leq C_{2}\Vert v\Vert _{\mathcal{L}%
		_{2}^{0}},\ \text{for}\  v\in \mathcal{L}_{2}^{0},  \label{Myeq2-41(2)}
\end{equation}
where $C_{2}$ is a constant depending on $\lambda $ and $K_2$.

\begin{remark}
	If $\mathcal{M}$ and $\mathcal{N}$ are the adjoint operators of $A$ and $B$
	respectively which satisfy the conditions (A2)-(A3), then $\mathcal{M}$ and $%
	\mathcal{N}$ satisfy (B2)-(B3) accordingly.
\end{remark}

\begin{definition}
	\label{def:ABSEE} A process $(p(\cdot ),q(\cdot ))\in  {\mathscr P}\times L_{\mathbb{F}}^{2}(0,T+K ;\mathcal{L}_{2}^{0})$ is called a
	solution to ABSEE~\eqref{absee-0}, if for $dt\times dP$-almost all $(t,\omega )\in
	\lbrack 0,T+K ]\times \Omega$,  it holds in $V^{\ast}$ that:
	\begin{equation}\label{bsde-def1}
		\left\{ \begin{aligned} p(t)=&\,\, \xi(T)
			+\int_{(t,T]}\zeta(s)dF(s)+\int_{t}^{T}\Big\{\mathcal{M}(s)p(s)+%
			\mathcal{N}(s)q(s)\\
			&+\mathbb{E}^{\mathcal{F}_{s}}[g(s,p_{s{+}},q_{s{+}})]\Big\}ds
			-\int_{t}^{T}q(s)dw(s),\quad t\in \lbrack 0,T], \\ p(t)=&\,\, \xi(t),\
			q(t)=\eta(t),\quad t\in (T,T+K ], \end{aligned}\right.
	\end{equation}%
	or equivalently, for $dt\times dP$-almost all $(t,\omega )\in \lbrack
	0,T+K]\times \Omega $ and every $\varphi \in V$, 
	\begin{equation}
		\left\{ \begin{aligned} \left\langle p(t),\varphi \right\rangle _{H}=&\,\,
			\left\langle \xi(T) ,\varphi \right\rangle
			_{H}+\int_{(t,T]}\langle\zeta(s),\varphi\rangle_H
			dF(s)+\int_{t}^{T}\left\langle \mathcal{M}(s)p(s),\varphi \right\rangle
			_{\ast }ds\\ & +\int_{t}^{T}\left\langle \mathcal{N}(s)q(s),\varphi
			\right\rangle _{\ast }ds
			+\int_{t}^{T}\big<\mathbb{E}^{\mathcal{F}_{s}}[g(s,p_{s+},q_{s+})],\varphi
			\big>_{H}ds \\ & -\int_{t}^{T}\left\langle q(s)dw(s),\varphi \right\rangle
			_{H},\quad t\in \lbrack 0,T], \\ p(t)=&\,\, \xi(t),\ q(t)=\eta(t),\quad t\in
			(T,T+K ]. \end{aligned}\right.
	\end{equation}
\end{definition}
\begin{remark}
	If $(p,q)$   is   a
	solution of ~\eqref{absee-0}, then from Lemma \ref{Ito-lemma}, we know that $p|_{[0,T]}\in D_{\mathbb{F}}^{2}(0,{T};H)$.
\end{remark}

In parallel to Theorem \ref{estimate}, we have the following \emph{a priori} estimate for ABSEE~(\ref{absee-0}), of which the proof is more involved, due to the  nature of backward SEEs  and the presence of a running terminal term.

\begin{theorem}
	\label{es-p} Assume the assumptions (B1)-(B4) hold. Suppose $(p(\cdot ),q(\cdot
	)) $ is a solution to ABSEE~(\ref{absee-0}) associated with $(\xi,\eta,g,\zeta )$, then there exists a positive
	constant $C$ depending on $\lambda ,\alpha ,L_2$ and $K_{2}$ such that 
	\begin{equation}
		\label{EQ3-7}
		\begin{aligned}
			& \mathbb{E}\big[\sup\limits_{t\in[0,T]}\Vert p(t)\Vert _{H}^{2}\big]+
			\mathbb{E}\int_{0}^{T}\Vert q(t)\Vert _{\mathcal{L}_{2}^{0}}^{2}dt+\mathbb{E}
			\int_{0}^{T}\Vert p(t)\Vert _{V}^{2}dt \\
			& \leq C\Big\{\mathbb{E}[\Vert \xi(T) \Vert _{H}^{2}]+\E\int_T^{T+K}\big(\|\xi(t)\|^2_{ H}+\|\eta(t)\|^2_{\mathcal L_2^0}\big)dt\\
			&\hspace{2em}+\mathbb{E}
			\int_{(0,T]}\Vert \zeta (t)\Vert _{H}^{2}\Delta F(t)dF(t)+\mathbb{E}\Big(
			\int_{(0,T]}\Vert \zeta (t)\Vert _{H}d|F|_{v}(t)\Big)^{2} \\
			& \hspace{2em}+\Big(\mathbb{E}\int_{0}^{T}\Vert g(t,0,0)\Vert _{H}dt\Big)^{2}%
			\Big\}.
		\end{aligned}
	\end{equation}
	Moreover, let $(p^{\prime }(\cdot ),q^{\prime }(\cdot ))$ be a solution to %
	\eqref{absee-0} with $(\xi ^{\prime },\eta', g^{\prime },\zeta ^{\prime })$. Then
	the following estimate holds: 
	\begin{equation}\label{bes-difference}
		\begin{aligned}
			& \mathbb{E}\Big[\sup\limits_{t\in[0,T]}\Vert p(t)-p^{\prime }(t)\Vert
			_{H}^{2}\Big]+\mathbb{E}\int_{0}^{T}\Vert q(t)-q^{\prime }(t)\Vert _{
				\mathcal{L}_{2}^{0}}^{2}dt+\mathbb{E}\int_{0}^{T}\Vert p(t)-p^{\prime
			}(t)\Vert _{V}^{2}dt\\&\leq C\Big\{\mathbb{E}[\Vert \xi(T) -\xi ^{\prime }(T)\Vert
			_{H}^{2}]+\E\int_{T}^{T+K}\big(\Vert {\xi}(t)-\xi'(t)\Vert^{2} _{H}+\Vert {\eta}(t)-\eta'(t)\Vert _{\mathcal{L}
				_{2}^{0}}^{2}\big)dt  \\
			&\hspace{2em}+\mathbb{E}\int_{(0,T]}\Vert \zeta (t)-\zeta ^{\prime
			}(t)\Vert _{H}^{2}\Delta F(t)dF(t)+\mathbb{E}\Big(\int_{(0,T]}\Vert \zeta
			(t)-\zeta ^{\prime }(t)\Vert _{H}d|F|_{v}(t)\Big)^{2} \\
			&\hspace{2em}+\mathbb{E}\Big(\int_{0}^{T}\big\Vert g(t,p_{t{+}}^{\prime
			},q_{t{+}}^{\prime })-g^{\prime }(t,p_{t{+}}^{\prime },q_{t{+}}^{\prime
			})\big\Vert _{H}dt\Big)^{2}\Big\},  
		\end{aligned}
	\end{equation}
	where $C$ is a positive constant depending on $\lambda ,\alpha ,L_2$ and $%
	K_{2}.$
\end{theorem}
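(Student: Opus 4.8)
The plan is to establish the difference estimate \eqref{bes-difference} first; the \emph{a priori} bound \eqref{EQ3-7} then follows by specializing to $(\xi',\eta',\zeta',g')\equiv(0,0,0,0)$, for which $(p',q')\equiv(0,0)$ solves \eqref{absee-0} and the inhomogeneous generator term reduces to $g(\cdot,0,0)$, exactly as \eqref{es-x} was deduced from \eqref{es-difference} in the proof of Theorem~\ref{estimate}. Writing $\hat p=p-p'$, $\hat q=q-q'$, $\hat\xi=\xi-\xi'$, $\hat\eta=\eta-\eta'$, $\hat\zeta=\zeta-\zeta'$, the pair $(\hat p,\hat q)$ solves an ABSEE with the same operators $\mathcal M,\mathcal N$, terminal datum $\hat\xi(T)$, running datum $\hat\zeta$, boundary data $(\hat\xi,\hat\eta)$ on $(T,T+K]$, and generator $\tilde g(s):=g(s,p_{s+},q_{s+})-g'(s,p'_{s+},q'_{s+})$. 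Casting this equation in forward differential form, I would identify the $V^\ast$-valued drift $-(\mathcal M\hat p+\mathcal N\hat q+\mathbb E^{\mathcal F_s}\tilde g)$, the finite-variation part $-\hat\zeta\,dF$, and the martingale $\int\hat q\,dw$, and then apply the It\^o formula of Lemma~\ref{Ito-lemma} to $\|\hat p(t)\|_H^2$, integrating from $t$ to $T$.

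The coercivity step is where (B2) enters. After the It\^o expansion, the leading terms $2\langle\mathcal M\hat p,\hat p\rangle_\ast+2\langle\mathcal N\hat q,\hat p\rangle_\ast$ together with the It\^o-isometry term $\int_t^T\|\hat q\|_{\mathcal L_2^0}^2\,ds$ appearing on the left should be recombined by completing the square: since $2\langle\mathcal N\hat q,\hat p\rangle_\ast=2\langle\hat q,\mathcal N^\ast\hat p\rangle_{\mathcal L_2^0}$, one has $\|\hat q\|^2-2\langle\hat q,\mathcal N^\ast\hat p\rangle=\|\hat q-\mathcal N^\ast\hat p\|^2-\|\mathcal N^\ast\hat p\|^2$, so the combination $2\langle\mathcal M\hat p,\hat p\rangle_\ast+\|\mathcal N^\ast\hat p\|^2\le-\alpha\|\hat p\|_V^2+\lambda\|\hat p\|_H^2$ provided by (B2) is exposed. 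This leaves the nonnegative $\int_t^T\|\hat q-\mathcal N^\ast\hat p\|^2\,ds$ and the coercive $\alpha\int_t^T\|\hat p\|_V^2\,ds$ on the left, and the genuine $\int_t^T\|\hat q\|^2$ is recovered afterwards from $\|\hat q\|^2\le 2\|\hat q-\mathcal N^\ast\hat p\|^2+2\|\mathcal N^\ast\hat p\|^2$ and the dual bound $\|\mathcal N^\ast\hat p\|_{\mathcal L_2^0}\le C_2\|\hat p\|_V$ implied by \eqref{Myeq2-41(2)}, at the cost of a controllable multiple of $\int_t^T\|\hat p\|_V^2$.

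The remaining terms I would treat as follows. The jump-correction term $\int_{(t,T]}\|\hat\zeta\|_H^2\,\Delta F\,dF$ is nonnegative and lands directly on the data side. The running-terminal term $2\int_{(t,T]}\langle\hat p,\hat\zeta\rangle\,dF$ I would bound by $2\big(\sup_{[t,T]}\|\hat p\|_H\big)\int_{(0,T]}\|\hat\zeta\|_H\,d|F|_v$ and split by Young's inequality, producing $(\int_{(0,T]}\|\hat\zeta\|_H\,d|F|_v)^2$ on the data side plus a small multiple of $\mathbb E\sup_{[t,T]}\|\hat p\|_H^2$; crucially this keeps the $F$-dependence in the integrated form of \eqref{EQ3-7} and avoids any Gr\"onwall argument against $d|F|_v$, so that (B5) is not needed here. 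For the generator I would split $\tilde g=[\,g(s,p_{s+},q_{s+})-g(s,p'_{s+},q'_{s+})\,]+[\,g(s,p'_{s+},q'_{s+})-g'(s,p'_{s+},q'_{s+})\,]$: the first (Lipschitz) bracket is estimated in the mean-square sense via (B4), which after Young's inequality contributes $\tfrac{L_2}{\varepsilon}\int_t^{T+K}(\|\hat p\|^2+\|\hat q\|^2)$, whose part on $(T,T+K]$ equals the boundary data $\|\hat\xi\|^2+\|\hat\eta\|^2$ and whose part on $[t,T]$ is fed back; the second (inhomogeneous) bracket is instead estimated in the $L^1$-in-time sense against $\sup_{[t,T]}\|\hat p\|_H$, yielding exactly the term $\mathbb E(\int_0^T\|g-g'\|_H\,dt)^2$ of \eqref{bes-difference} up to a free small multiple of $\mathbb E\sup_{[t,T]}\|\hat p\|_H^2$.

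Finally I would close the estimate by a two-stage scheme. Taking expectations \emph{without} the supremum (the stochastic integral then vanishes) yields a bound for $\mathbb E\int_\tau^T\|\hat q\|^2$ and $\mathbb E\int_\tau^T\|\hat p\|_V^2$ in terms of the data, $\mathbb E\int_\tau^T\|\hat p\|_H^2$, and a small multiple of $\mathbb E\sup_{[\tau,T]}\|\hat p\|_H^2$; here the $\int\|\hat q\|^2$ feedback from the Lipschitz bracket is absorbed on the left by choosing $\varepsilon$ large, which only inflates the benign $\|\hat p\|_H^2$-coefficient. Then, taking the supremum over $[\tau,T]$ and using the Burkholder--Davis--Gundy inequality together with \eqref{e:QV}, exactly as in \eqref{e:3}, the martingale contributes $\tfrac14\mathbb E\sup_{[\tau,T]}\|\hat p\|_H^2+C\,\mathbb E\int_\tau^T\|\hat q\|^2$; substituting the first-stage bound for $\mathbb E\int_\tau^T\|\hat q\|^2$ converts the last term into data plus $\mathbb E\int_\tau^T\|\hat p\|_H^2$ plus a \emph{small} multiple of $\mathbb E\sup_{[\tau,T]}\|\hat p\|_H^2$. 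After absorbing all the small supremum-feedback, one arrives at $\mathbb E\sup_{[\tau,T]}\|\hat p\|_H^2\le C\cdot(\text{data})+C\int_\tau^T\mathbb E\sup_{[s,T]}\|\hat p\|_H^2\,ds$, and Gr\"onwall's inequality closes the loop on all of $[0,T]$; feeding the result back into the first stage gives the bounds on $\mathbb E\int_0^T\|\hat q\|^2$ and $\mathbb E\int_0^T\|\hat p\|_V^2$. The main obstacle is precisely this last coordination: the anticipated generator feeds $\int\|\hat q\|^2$ back through (B4), while the Burkholder--Davis--Gundy bound of the supremum introduces a second, non-small multiple of $\int\|\hat q\|^2$; the device that renders everything absorbable is to estimate $\int\|\hat q\|^2$ \emph{first} by the supremum-free bound, whose supremum-feedback coefficient is free to be taken small, and only afterwards to invoke it inside the supremum estimate, so that neither smallness of $T$ nor iteration over subintervals is required.
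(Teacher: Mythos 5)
Your proposal follows essentially the same route as the paper's proof: It\^o's formula from Lemma~\ref{Ito-lemma} applied to $\|\hat p\|_H^2$ on $[t,T]$; coercivity extracted from (B2) after recombining the cross term $2\langle\mathcal N\hat q,\hat p\rangle_\ast$ with the quadratic-variation term $\int_t^T\|\hat q\|^2_{\mathcal L_2^0}\,ds$; the generator split into a Lipschitz bracket (mean-square via (B4), with the $(T,T+K]$ portion turning into the boundary data $\hat\xi,\hat\eta$) and an inhomogeneous bracket (estimated in $L^1$-in-time against $\sup_t\|\hat p(t)\|_H$); the running terminal bounded by $\sup_t\|\hat p(t)\|_H\int_{(0,T]}\|\hat\zeta\|_H\,d|F|_v$ and split by Young; and the same two-stage closure (expectation without the supremum first, then supremum plus Burkholder--Davis--Gundy with the first-stage bound substituted for $\mathbb E\int\|\hat q\|^2$). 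Your completing-the-square treatment of the $\mathcal N$-term is a clean variant of the paper's $\varepsilon$-weighted Young inequality and works equally well (the $\|\mathcal N^\ast\hat p\|^2\le C_2^2\|\hat p\|_V^2$ cost must be kept small relative to $\alpha$ via a weighted splitting, or the recovery of $\int\|\hat q\|^2$ done a posteriori), as does postponing Gr\"onwall to $\mathbb E\sup_{[\tau,T]}\|\hat p\|_H^2$ rather than applying it to $\mathbb E\|\hat p(t)\|_H^2$ as the paper does.

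One step is missing. Every absorption of a ``small multiple of $\mathbb E\sup_{[0,T]}\|\hat p(t)\|_H^2$'' into the left-hand side presupposes that this quantity is finite, and finiteness is not part of the definition of a solution: the space $\mathscr P$ only requires $p|_{[0,T]}\in L_{\mathbb F}^{2}(0,T;V)$, not $p|_{[0,T]}\in D_{\mathbb F}^{2}(0,T;H)$. The paper establishes $\mathbb E\big[\sup_{t\in[0,T]}\|\hat p(t)\|_H^2\big]<\infty$ as a preliminary step (display \eqref{sup-finite}), reading it off from the equation \eqref{bsde-def1} itself via (B3), \eqref{Myeq2-41(2)} and the Burkholder--Davis--Gundy inequality. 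You should do the same before any absorption; note that one may assume without loss of generality that the data on the right-hand side of \eqref{bes-difference} are finite, so this preliminary step goes through even without invoking (B5), which is consistent with your observation that (B5) is not needed for the estimate. With that supplement your argument is complete.
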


\begin{proof}
	It suffices to prove \eqref{bes-difference} which implies (\ref{EQ3-7}).
	Set 
	\begin{equation*}
		\hat{p}(t)=p(t)-p^{\prime }(t),\ \hat{q}(t)=q(t)-q^{\prime }(t),\ \hat{\zeta}%
		(t)=\zeta (t)-\zeta ^{\prime }(t),\text{ for } t\in[0,T],
	\end{equation*}
	and
	\begin{equation*}
		\hat{\xi}(t)=\xi(t) -\xi ^{\prime }(t),\  \ \text{for}\ t\in[T,T+K]; \  \hat{\eta}(t)=\eta(t) -\eta ^{\prime }(t),  \ \text{for}\ t\in(T,T+K].
	\end{equation*}
	We first note that, from (\ref{bsde-def1}),   assumptions (B3)-(B5), (\ref{Myeq2-41(2)}) and Burkholder-Davis-Gundy inequality, \begin{equation}\label{sup-finite}\mathbb{E}\Big[\sup\limits_{t\in[0,T]}\Vert \hat{p}(t)\Vert _{H}^{2}
		\Big]<\infty.\end{equation}
	Applying It\^{o}'s formula \eqref{e:ito-lemma} to $\Vert \hat{p}(t)\Vert
	_{H}^{2}$ on $[t,T],$ we have 
	\begin{align*}
		& \Vert \hat{p}(t)\Vert _{H}^{2}+\int_{t}^{T}\Vert \hat{q}(s)\Vert _{%
			\mathcal{L}_{2}^{0}}^{2}ds=\Vert \hat{\xi}(T)\Vert _{H}^{2}+2\int_{t}^{T}\Big\{%
		\big<\mathcal{M}(s)\hat{p}(s),\hat{p}(s)\big>_{\ast }+\left\langle \mathcal{N%
		}(s)\hat{q}(s),\hat{p}(s)\right\rangle _{\ast } \\
		&\hspace{3em} +\big<\mathbb{E}^{\mathcal{F}_{s}}\big[g(s,p_{s+},q_{s+})-g^{\prime
		}(s,p_{s+}^{\prime },q_{s+}^{\prime })\big],\hat{p}(s)\big>_{H}\Big\}ds \\
		&\hspace{3em}  +2\int_{(t,T]}\big<\hat{p}(s),\hat{\zeta}(s)\big>_{H}dF(s)+\int_{(t,T]}%
		\Vert \hat{\zeta}(s)\Vert _{H}^{2}\Delta F(s)dF(s) \\
		& \hspace{3em} -2\int_{t}^{T}\left\langle \hat{p}(s),\hat{q}(s)dw(s)\right\rangle _{H}.
	\end{align*}
	By conditions  (B1)-(B3), we obtain that, for some positive constant $%
	\varepsilon $ to be determined, 
	\begin{equation}\label{EQ-5} 
		\begin{aligned}
			& \Vert \hat{p}(t)\Vert _{H}^{2}+\int_{t}^{T}\Vert \hat{q}(s)\Vert _{
				\mathcal{L}_{2}^{0}}^{2}ds   \\
			& \leq  2\int_{(t,T]}\big<\hat{p}(s),\hat{\zeta}%
			(s)\big>_{H}dF(s)+\int_{(t,T]}\Vert \hat{\zeta}(s)\Vert _{H}^{2}\Delta
			F(s)dF(s)  \\
			& \hspace{1em}+\int_{t}^{T}\Big\{-2\varepsilon \big<\mathcal{M}(s)\hat{p}(s),%
			\hat{p}(s)\big>_{\ast }+2(1+\varepsilon )\big<\mathcal{M}(s)\hat{p}(s),\hat{p%
			}(s)\big>_{\ast }\\&\hspace{4em}+(1+\varepsilon )\Vert \mathcal{N}^{\ast }(s)\hat{p}%
			(s)\Vert _{\mathcal{L}_{2}^{0}}^{2}+\tfrac{1}{1+\varepsilon }\Vert \hat{q}%
			(s)\Vert _{\mathcal{L}_{2}^{0}}^{2}   \\
			& \hspace{4em}+2\big|\big<\mathbb{E}^{\mathcal{F}%
				_{s}}\big[g(s,p_{s+},q_{s+})-g(s,p_{s+}^{\prime },q_{s+}^{\prime })\big],\hat{p}(s)%
			\big>_{H}\big|   \\
			& \hspace{4em}+2\big|\big<\mathbb{E}^{\mathcal{F}_{s}}\big[g(s,p_{s+}^{\prime
			},q_{s+}^{\prime })-g^{\prime }(s,p_{s+}^{\prime },q_{s+}^{\prime })\big],\hat{p}%
			(s)\big>_{H}\big|\Big\}ds   \\
			& \hspace{1em}+\Vert\hat{\xi}(T)\Vert _{H}^{2}-2\int_{t}^{T}\left\langle \hat{q}(s)dw(s),\hat{p}
			(s)\right\rangle _{H}\\
			& \leq 2\sup_{s\in[t, T]}\Vert \hat{p}
			(s)\Vert _{H}\int_{(t,T]}\Vert \hat{\zeta}(s)\Vert
			_{H}d|F|_{v}(s)+\int_{(t,T]}\Vert \hat{\zeta}(s)\Vert _{H}^{2}\Delta
			F(s)dF(s) \\
			& \hspace{1em}+\int_{t}^{T}\Big\{2\varepsilon K_{2}\Vert \hat{p}(s)\Vert
			_{V}^{2}+(1+\varepsilon )(-\alpha \Vert \hat{p}(s)\Vert _{V}^{2}+\lambda
			\Vert \hat{p}(s)\Vert _{H}^{2})+\tfrac{1}{1+\varepsilon }\Vert \hat{q}
			(s)\Vert _{\mathcal{L}_{2}^{0}}^{2}   \\
			& \hspace{4em}+\tfrac{4L_2}{\varepsilon }\Vert \hat{p}(s)\Vert _{H}^{2}+%
			\tfrac{\varepsilon }{4L_2}\mathbb{E}^{\mathcal{F}_{s}}\big[\big\Vert 
			g(s,p_{s+},q_{s+}) -g(s,p_{s+}^{\prime },q_{s+}^{\prime })\big\Vert_{H}^{2}\big]
			\Big\}ds \\
			& \hspace{1em}+2\sup_{s\in[t,T]}\Vert \hat{p}(s)\Vert _{H}\int_{t}^{T}
			\big \Vert\mathbb{E}^{\mathcal{F}_{s}}\big[g(s,p_{s+}^{\prime },q_{s+}^{\prime })
			-g^{\prime }(s,p_{s+}^{\prime },q_{s+}^{\prime })\big]\big\Vert%
			_{H}ds\\
			& \hspace{1em}+\Vert \hat{\xi}(T)\Vert _{H}^{2}-2\int_{t}^{T}\left\langle \hat{q}(s)dw(s),\hat{p}(s)\right\rangle
			_{H}.  
		\end{aligned}
	\end{equation}
	Taking expectation on both sides and using the condition (B4), we get 
	\begin{align*}
		& \mathbb{E}[\Vert \hat{p}(t)\Vert _{H}^{2}]+\mathbb{E}\int_{t}^{T}\Vert 
		\hat{q}(s)\Vert _{\mathcal{L}_{2}^{0}}^{2}ds \\
		& \leq \mathbb{E}\big[\Vert \hat{\xi}(T)\Vert _{H}^{2}\big]+\frac{\varepsilon}{4}\E\int_{T}^{T+K}\big\{\Vert \hat{\xi}(s)\Vert^2_{ H}+\Vert \hat{\eta}(s)\Vert
		_{\mathcal{L}_{2}^{0}}^{2} \big\}ds\\&\hspace{1em}+2\mathbb{E}\Big[\sup_{s\in[t,T]}\Vert \hat{p}(s)\Vert _{H}\int_{(t,T]}\Vert \hat{\zeta}(s)\Vert
		_{H}d|F|_{v}(s)\Big]+\mathbb{E}\int_{(t,T]}\Vert \hat{\zeta}(s)\Vert
		_{H}^{2}\Delta F(s)dF(s) \\
		& \hspace{1em}+\mathbb{E}\int_{t}^{T}\Big\{2\varepsilon K_{2}\Vert \hat{p}
		(s)\Vert _{V}^{2}+(1+\varepsilon )(-\alpha \Vert \hat{p}(s)\Vert
		_{V}^{2}+\lambda \Vert \hat{p}(s)\Vert _{H}^{2})+\tfrac{1}{1+\varepsilon }
		\Vert \hat{q}(s)\Vert _{\mathcal{L}_{2}^{0}}^{2} \\
		& \hspace{5em}+\tfrac{4L_2}{\varepsilon }\Vert \hat{p}(s)\Vert _{H}^{2}+\tfrac{
			\varepsilon }{4L_2}\times L_2\big(\Vert \hat{p}(s)\Vert^{2} _{ H}+\Vert \hat{q}(s)\Vert
		_{\mathcal{L}_{2}^{0}}^{2} \big)\Big\}ds \\
		& \hspace{1em}+2\mathbb{E}\Big[\sup_{s\in[t,T]}\Vert \hat{p}(s)\Vert
		_{H}\int_{t}^{T}\big\Vert\mathbb{E}^{\mathcal{F}_{s}}\big[g(s,p_{s+}^{\prime
		},q_{s+}^{\prime }) -g^{\prime }(s,p_{s+}^{\prime },q_{s+}^{\prime })\big]\big\Vert%
		_{H}ds\Big] \\
		& \leq \mathbb{E}\big[\Vert \hat{\xi}(T)\Vert _{H}^{2}\big]+\frac{\varepsilon}{4}\E\int_{T}^{T+K}\big(\Vert \hat{\xi}(s)\Vert^2_{ H}+\Vert \hat{\eta}(s)\Vert
		_{\mathcal{L}_{2}^{0}}^{2} \big)ds\\&\hspace{1em}+2\mathbb{E}\big[\sup_{s\in[t,T]}\Vert \hat{p}(s)\Vert _{H}\int_{(t,T]}\Vert \hat{\zeta}(s)\Vert
		_{H}d|F|_{v}(s)\big]+\mathbb{E}\int_{(t,T]}\Vert \hat{\zeta}(s)\Vert
		_{H}^{2}\Delta F(s)dF(s) \\
		& \hspace{1em}+\mathbb{E}\int_{t}^{T}\Big\{{ \big(2\varepsilon K_{2}-\alpha (1+\varepsilon )\big)}\Vert \hat{p}(s)\Vert _{V}^{2}+%
		\big(\tfrac{1}{1+\varepsilon }+\tfrac{\varepsilon }{4}\big)\Vert \hat{q}%
		(s)\Vert _{\mathcal{L}_{2}^{0}}^{2} \\
		& \hspace{6em}+\big(\tfrac{4L_2}{\varepsilon }+{ \tfrac{%
			\varepsilon }{4}}+\lambda (1+\varepsilon )\big)%
		\Vert \hat{p}(s)\Vert _{H}^{2}\Big\}ds \\
		& \hspace{1em}+2\mathbb{E}\Big[\sup_{s\in[t,T]}\Vert \hat{p}(s)\Vert
		_{H}\int_{t}^{T}\big\Vert \mathbb{E}^{\mathcal{F}_{s}}\big[g(s,p_{s+}^{\prime
		},q_{s+}^{\prime }) -g^{\prime }(s,p_{s+}^{\prime },q_{s+}^{\prime })\big]\big\Vert _{H}ds%
		\Big].
	\end{align*}%
	Choosing $\varepsilon$ small enough such that
	\[
{	2\varepsilon  K_{2}-\alpha(1+\varepsilon)}<0\ \ \text{and
	}\ \ \frac{1}{1+\varepsilon}+\frac{\varepsilon}{4}=\frac{4+\varepsilon
		+\varepsilon^{2}}{4+4\varepsilon}<1,
	\]
	we can get 
	\begin{align*}
		& \mathbb{E}[\Vert \hat{p}(t)\Vert _{H}^{2}]+\mathbb{E}\int_{t}^{T}\Vert 
		\hat{q}(s)\Vert _{\mathcal{L}_{2}^{0}}^{2}ds+\mathbb{E}\int_{t}^{T}\Vert 
		\hat{p}(s)\Vert _{V}^{2}ds \\
		& \leq C\mathbb{E}\Big\{\Vert \hat{\xi}(T)\Vert _{H}^{2}+\int_{t}^{T}\Vert \hat{p}(s)\Vert _{H}^{2}ds+\int_{T}^{T+K}\big(\Vert \hat{\xi}(s)\Vert^{2}_{ H}+\Vert \hat{\eta}(s)\Vert
		_{\mathcal{L}_{2}^{0}}^{2} \big)ds\\&\hspace{5em}+\sup_{s\in[t,T]}\Vert \hat{p}(s)\Vert
		_{H}\int_{(t,T]}\Vert \hat{\zeta}(s)\Vert _{H}d|F|_{v}(s)+\int_{(t,T]}\Vert 
		\hat{\zeta}(s)\Vert _{H}^{2}\Delta F(s)dF(s) \\
		& \hspace{5em}+\sup_{s\in[t,T]}\Vert \hat{p}(s)\Vert _{H}\int_{t}^{T}%
		\big\Vert\mathbb{E}^{\mathcal{F}_{s}}\big[g(s,p_{s+}^{\prime },q_{s+}^{\prime })
		-g^{\prime }(s,p_{s+}^{\prime },q_{s+}^{\prime })\big]\big\Vert%
		_{H}ds\Big\},
	\end{align*}%
	where $C$ is a positive constant depending on $\lambda ,\alpha ,L_2,K_{2}$.
	Applying Gr\"{o}nwall's inequality to $\mathbb{E}[\Vert \hat{p}(t)\Vert
	_{H}^{2}]$ yields that, for some undetermined $a>0$, 
	\begin{equation}\label{supout}
		\begin{aligned}
			& \mathbb{E}[\Vert \hat{p}(t)\Vert _{H}^{2}]+\mathbb{E}\int_{t}^{T}\Vert 
			\hat{q}(s)\Vert _{\mathcal{L}_{2}^{0}}^{2}ds+\mathbb{E}\int_{t}^{T}\Vert 
			\hat{p}(s)\Vert _{V}^{2}ds   \\
			& \leq C\mathbb{E}\Big\{\Vert \hat{\xi}(T)\Vert _{H}^{2}+\int_{T}^{T+K}\big\{\Vert \hat{\xi}(s)\Vert^{2} _{ H}+\Vert \hat{\eta}(s)\Vert
			_{\mathcal{L}_{2}^{0}}^{2} \big\}ds\\&\hspace{3em}+\sup_{s\in[t,T]}\Vert \hat{p}(s)\Vert _{H}\int_{(t,T]}\Vert \hat{\zeta}(s)\Vert
			_{H}d|F|_{v}(s)+\int_{(t,T]}\Vert \hat{\zeta}(s)\Vert _{H}^{2}\Delta
			F(s)dF(s)   \\
			& \hspace{3em}+\sup_{s\in[t,T]}\Vert \hat{p}(s)\Vert _{H}\int_{t}^{T}
			\big \Vert\mathbb{E}^{\mathcal{F}_{s}}\big[g(s,p_{s+}^{\prime },q_{s+}^{\prime })
			-g^{\prime }(s,p_{s+}^{\prime },q_{s+}^{\prime })\big]\big\Vert
			_{H}ds\Big\} \\
			& \leq C\mathbb{E}\Big\{\Vert \hat{\xi}(T)\Vert _{H}^{2}+\int_{T}^{T+K}\big\{\Vert \hat{\xi}(s)\Vert^{2} _{ H}+\Vert \hat{\eta}(s)\Vert
			_{\mathcal{L}_{2}^{0}}^{2} \big\}ds+a\sup_{t\in[0,T]}\Vert \hat{p}(t)\Vert _{H}^{2}  \\&\hspace{4em}+\int_{(0,T]}\Vert 
			\hat{\zeta}(s)\Vert _{H}^{2}\Delta F(s)dF(s)+\frac{1}{a}\Big(%
			\int_{(0,T]}\Vert \hat{\zeta}(s)\Vert _{H}d|F|_{v}(s)\Big)^{2} \\
			& \hspace{4em}+\frac{1}{a}\Big(\int_{0}^{T}\big\Vert g(t,p_{s+}^{\prime
			},q_{s+}^{\prime })-g^{\prime }(t,p_{s+}^{\prime },q_{s+}^{\prime })\big\Vert
			_{H}ds\Big)^{2}\Big\},  
		\end{aligned}
	\end{equation}
	with $C$ being a constant independent of $a$ and may vary from line to
	line.
	
	On the other hand, by \eqref{e:QV} and Burkholder-Davis-Gundy inequality, we
	have for some positive constant $D$, 
	\begin{equation}
		\begin{split}
			&\mathbb{E}\Big[\sup\limits_{t\in[0,T]}\Big\vert\int_{t}^{T}\left\langle 
			\hat{q}(s)dw(s),\hat{p}(s)\right\rangle _{H}\Big\vert\Big]\\& \leq D\mathbb{E}
			\Big(\int_{0}^{T}\Vert \hat{q}(t)\Vert _{\mathcal{L}_{2}^{0}}^{2}\Vert \hat{p%
			}(t)\Vert _{H}^{2}dt\Big)^{\frac{1}{2}} \\
			& \leq D\mathbb{E}\Big[\sup\limits_{t\in[0,T]}\Vert \hat{p}(t)\Vert _{H}%
			\Big(\int_{0}^{T}\Vert \hat{q}(t)\Vert _{\mathcal{L}_{2}^{0}}^{2}dt\Big)^{%
				\frac{1}{2}}\Big] \\
			& \leq \frac{1}{8}\mathbb{E}\big[\sup\limits_{t\in[0,T]}\Vert \hat{p}%
			(t)\Vert _{H}^{2}\big]+2D^{2}\mathbb{E}\int_{0}^{T}\Vert \hat{q}(t)\Vert _{%
				\mathcal{L}_{2}^{0}}^{2}dt.
		\end{split}
		\label{EQ-6}
	\end{equation}%
	Then taking supremum over $t\in \lbrack 0,T]$ on both sides of (\ref{EQ-5})
	(for any fixed $\varepsilon >0$), we get 
	\begin{align*}
		& \sup\limits_{t\in[0,T]}\Vert \hat{p}(t)\Vert _{H}^{2}\leq C\Big\{
		\Vert \hat{\xi}(T)\Vert _{H}^{2}+\int_{T}^{T+K}\big(\Vert \hat{\xi}(s)\Vert^{2} _{ H}+\Vert \hat{\eta}(s)\Vert _{\mathcal{L}%
			_{2}^{0}}^{2}\big)ds\\&\hspace{1em}+\Big(\int_{(0,T]}\Vert \hat{\zeta}(s)\Vert
		_{H}d|F|_{v}(s)\Big)^{2}+\int_{(0,T]}\Vert \hat{\zeta}(s)\Vert
		_{H}^{2}\Delta F(s)dF(s) \\
		& \hspace{1em}+\int_{0}^{T}\Vert \hat{p}(s)\Vert
		_{H}^{2}ds+\int_{0}^{T}\Vert \hat{q}(s)\Vert _{\mathcal{L}%
			_{2}^{0}}^{2}ds+\int_{0}^{T}\Vert \hat{p}(s)\Vert _{V}^{2}ds \\
		& \hspace{1em}+\Big(\int_{0}^{T}\big\Vert \mathbb{E}^{\mathcal{F}%
			_{s}}\big[g(s,p_{s+}^{\prime },q_{s+}^{\prime })-g^{\prime }(s,p_{s+}^{\prime
		},q_{s+}^{\prime })\big]\big\Vert _{H}ds\Big)^{2}\Big\} \\
		& \hspace{1em}+\frac{1}{4}\sup_{t\in[0,T]}\Vert \hat{p}(t)\Vert
		_{H}^{2}+2\sup\limits_{t\in[0,T]}\Big|\int_{t}^{T}\left\langle \hat{p}%
		(s),\hat{q}(s)dw(s)\right\rangle _{H}\Big|.
	\end{align*}%
	Taking expectation on both sides, we then obtain by (\ref{EQ-6}) and (\ref{supout}) that 
	\begin{equation}\label{supin}
		\begin{aligned} 
			& \mathbb{E}\Big[\sup\limits_{t\in[0,T]}\Vert \hat{p}(t)\Vert _{H}^{2}
			\Big]\leq (\frac{1}{2}+Ca)\mathbb{E}\big[\sup\limits_{t\in[0,T]}\Vert \hat{p}(t)\Vert _{H}^{2}\big]   \\
			& \hspace{2em}+C\E\Big\{\big[\Vert \hat{\xi}(T)\Vert _{H}^{2}\big]+\int_{T}^{T+K}\big(\Vert \hat{\xi}(s)\Vert^{2} _{ H}+\Vert \hat{\eta}(s)\Vert _{\mathcal{L}%
				_{2}^{0}}^{2}\big)ds\\&\hspace{3em}+\int_{(0,T]}\Vert \hat{
				\zeta}(s)\Vert _{H}^{2}\Delta F(s)dF(s)+(1+\frac{1}{a})\Big(
			\int_{(0,T]}\Vert \hat{\zeta}(s)\Vert _{H}d|F|_{v}(s)\Big)^{2} \\
			& \hspace{3em}+(1+\frac{1}{a})\Big(\int_{0}^{T}\big\Vert g(s,p_{s+}^{\prime
			},q_{s+}^{\prime })-g^{\prime }(s,p_{s+}^{\prime },q_{s+}^{\prime })\big\Vert
			_{H}ds\Big)^{2}\Big\}. 
		\end{aligned}
	\end{equation}
	In view of (\ref{sup-finite}), we get by choosing sufficiently small $a$ that
	\begin{align}
		& \mathbb{E}\Big[\sup\limits_{t\in[0,T]}\Vert \hat{p}(t)\Vert _{H}^{2}%
		\Big]   \leq C\E\Big\{\big[\Vert \hat{\xi}(T)\Vert _{H}^{2}\big]+\int_{T}^{T+K}\big(\Vert \hat{\xi}(s)\Vert^{2} _{ H}+\Vert \hat{\eta}(s)\Vert _{\mathcal{L}%
			_{2}^{0}}^{2}\big)ds\notag\\&\hspace{3em}+\int_{(0,T]}\Vert 
		\hat{\zeta}(s)\Vert _{H}^{2}\Delta F(s)dF(s)+\Big(\int_{(t,T]}\Vert \hat{%
			\zeta}(s)\Vert _{H}d|F|_{v}(s)\Big)^{2} \\
		& \hspace{3em}+\Big(\int_{0}^{T}\big\Vert g(s,p_{s+}^{\prime },q_{s+}^{\prime
		})-g^{\prime }(s,p_{s+}^{\prime },q_{s+}^{\prime })\big\Vert _{H}ds\Big)%
		^{2}\Big\}.  \notag
	\end{align}%
	This together with (\ref{supout}) yields the desired estimate 
	\eqref{bes-difference}. \end{proof}
	
	Now we are ready to prove the well-posedness  for ABSEE (\ref{absee-0}).
	
	\begin{theorem}
		\label{ABSEE-thm} Assuming (B1)-(B5),  ABSEE (\ref{absee-0}) admits a unique solution in $  { \mathscr P}\times L_{\mathbb{F
		}}^{2}(0,T+K; \mathcal{L}_{2}^{0})$ in the sense of Definition \ref{def:ABSEE}. 
	\end{theorem}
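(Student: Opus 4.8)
The plan is to obtain uniqueness for free from the stability estimate of Theorem~\ref{es-p} and to construct a solution by the method of continuity, where the continuation parameter switches on the nonlinear anticipating generator $g$.

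\textbf{Uniqueness.} If $(p,q)$ and $(p',q')$ are two solutions in $\mathscr P\times L_{\mathbb F}^2(0,T+K;\mathcal L_2^0)$ associated with the \emph{same} data $(\xi,\eta,g,\zeta)$, then every term on the right-hand side of \eqref{bes-difference} vanishes: all data differences are zero and the generator discrepancy $g(\cdot,p'_{\cdot+},q'_{\cdot+})-g'(\cdot,p'_{\cdot+},q'_{\cdot+})$ vanishes because $g=g'$. Hence the left-hand side of \eqref{bes-difference} is zero, which forces $(p,q)=(p',q')$.

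\textbf{Existence.} I would first strip the equation of everything except the nonlinear generator. A solution translation folds the prescribed future data $\xi,\eta$ on $(T,T+K]$ into the generator as a known adapted input, so that the unknown lives on $[0,T]$ and the anticipating argument $p_{s+},q_{s+}$ only couples the unknown on $[s,T]$ with this fixed datum; by (B4) the reduced generator keeps the Lipschitz constant $L_2$. Next, for $\theta\in[0,1]$, consider the family
\begin{equation*}
\begin{aligned}
p(t)=\xi(T)+\int_{(t,T]}\zeta(s)\,dF(s)&+\int_t^T\Big\{\mathcal M(s)p(s)+\mathcal N(s)q(s)\\
&+\theta\,\mathbb E^{\mathcal F_s}\big[g(s,p_{s+},q_{s+})\big]+\phi(s)\Big\}ds-\int_t^Tq(s)\,dw(s),
\end{aligned}
\end{equation*}
with arbitrary source $\phi\in L_{\mathbb F}^{1,2}(0,T;H)$, and let $\Theta\subset[0,1]$ be the set of $\theta$ for which this ABSEE is solvable in $\mathscr P\times L_{\mathbb F}^2$ for all admissible $(\xi,\phi,\zeta)$. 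At $\theta=0$ the generator drops out and the equation is a \emph{linear, non-anticipative} BSEE whose coercivity comes from (B2); its solvability for regular data is classical in the Gelfand-triple setting, and the merely measurable $\zeta$, the bounded-variation $F$, and the $L^{1,2}$ source $\phi$ are reached by an approximation argument — solve first for regular data (say $dF$ absolutely continuous, $\zeta,\phi$ smooth), then pass to the limit, the approximants being Cauchy by \eqref{bes-difference}. Thus $0\in\Theta$.

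For the continuation step, assume $\theta_0\in\Theta$ and let $\theta=\theta_0+\delta$. Define $\Phi:(P,Q)\mapsto(p,q)$ by letting $(p,q)$ solve the $\theta_0$-equation with the extra source $\delta\,\mathbb E^{\mathcal F_s}[g(s,P_{s+},Q_{s+})]$, which lies in $L_{\mathbb F}^{1,2}(0,T;H)$ by (B1) and (B4); this is well posed since $\theta_0\in\Theta$. Applying \eqref{bes-difference} to two images and using Cauchy--Schwarz together with (B4),
\begin{equation*}
\mathbb E\Big(\int_0^T\big\|g(s,P_{s+},Q_{s+})-g(s,P'_{s+},Q'_{s+})\big\|_H\,ds\Big)^2\le TL_2\,\mathbb E\int_0^T\big(\|P-P'\|_H^2+\|Q-Q'\|_{\mathcal L_2^0}^2\big)\,ds,
\end{equation*}
so $\Phi$ is a contraction as soon as $C\,\delta^2 TL_2<1$, a smallness condition on $\delta$ that does \emph{not} depend on $\theta_0$. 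Its fixed point solves the $\theta$-equation, giving $\theta_0+\delta\in\Theta$; iterating with this uniform step size reaches $\theta=1$, and undoing the translation yields a solution of \eqref{absee-0}.

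\textbf{Main obstacle.} The delicate points are the two reductions and the base case. I must check that folding the future data preserves (B4) and that the added source $\delta\,\mathbb E^{\mathcal F_s}[g(s,P_{s+},Q_{s+})]$ stays in the space $L_{\mathbb F}^{1,2}(0,T;H)$ to which the generator term of \eqref{bes-difference} is tuned — the $L^{1,2}$ (rather than $L^2$) integrability of the generator is exactly what lets the continuation step size be taken uniform in $\theta_0$. The genuinely hard part is the base case: solving the linear BSEE with a \emph{merely measurable} $\zeta$ and a \emph{general bounded-variation} $F$, whose jumps produce the $\Delta F$ correction already present in the Itô formula \eqref{e:ito-lemma}; controlling this term through the approximation is where the real work lies.
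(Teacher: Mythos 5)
Your skeleton matches the paper's: uniqueness from the stability estimate \eqref{bes-difference}, and existence by the continuation method in a parameter multiplying the generator $g$, with the key observation that the admissible step size $\delta$ is uniform in $\theta_0$ (the paper's Step 1 does exactly this, with the base case $\mu=0$ handled by citing a standard linear result). However, there is a genuine gap in your treatment of the running terminal $\int_{(t,T]}\zeta(s)\,dF(s)$, which you yourself flag as ``the genuinely hard part.'' You propose to reach merely measurable $\zeta$ and general bounded-variation $F$ at the base case by approximating the data --- in particular by taking $dF$ absolutely continuous --- and then arguing the approximants are Cauchy via \eqref{bes-difference}. But Theorem~\ref{es-p} compares two solutions driven by the \emph{same} process $F$: the right-hand side of \eqref{bes-difference} measures the discrepancy $\zeta-\zeta'$ against $\Delta F\,dF$ and $d|F|_v$ for a single fixed $F$, and no stability in $F$ itself is available. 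So your approximation scheme cannot be closed with the tools at hand, and ``solvability for regular data is classical'' is not accurate once a running terminal with a random BV integrator is present.

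The paper circumvents this with a device absent from your proposal: a \emph{solution translation}. Setting $\alpha(t)=\int_{(0,t]}\zeta(s)\,dF(s)$ and $\bar p=p+\alpha$, the running terminal is absorbed into the unknown and reappears only through the extra generator terms $-\mathcal M(s)\alpha(s)$ and $g(s,\bar p_{s+}-\alpha_{s+},q_{s+})$, reducing to the $\zeta\equiv 0$ case already solved by continuation. This requires $\alpha(s)\in V$ so that $\mathcal M(s)\alpha(s)$ is defined, which is why the paper first treats $V$-valued $\zeta$ exactly (no approximation of $F$ at all), and only then approximates an $H$-valued $\zeta$ by $V$-valued ones $\zeta^n$ \emph{with $F$ held fixed} --- precisely the setting in which \eqref{bes-difference} does apply and yields the Cauchy property. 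You would need to incorporate this translation step (and the $V$-versus-$H$ issue it is designed to address) for your argument to go through. Your preliminary ``folding'' of the future data $\xi,\eta$ into the generator is harmless but unnecessary; the uniqueness argument and the continuation estimates are otherwise correct.
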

	
	\begin{proof}
		The uniqueness follows directly from (\ref{bes-difference}) in Theorem \ref%
		{es-p}. The proof of the existence is divided into the following three steps.
		
		\textit{Step 1. The case $\zeta\equiv 0$.} We shall make use of the so-called
		continuation method (see, e.g.,  \cite{pw99}). For any $\mu \in \lbrack 0,1]$ and $
		f_{0}(\cdot )\in L_{\mathbb{F}}^{1,2}(0,T;H)$, we consider the ABSEE 
		\begin{equation}  \label{absee2}
			\left\{ \begin{aligned} -dp(t)=&\,\,
				\Big\{\mathcal{M}(t)p(t)+\mathcal{N}(t)q(t)+\mu
				\mathbb{E}^{\mathcal{F}_{t}}\big[g(t,p_{t+},q_{t+})\big]+f_{0}(t)\Big\}dt \\
				& \quad -q(t)dw(t),\quad t\in \lbrack 0,T], \\ p(t)=&\xi(t),\
				q(t)=\eta(t),\quad t\in [T,T+K ]. \end{aligned}\right.
		\end{equation}%
		In the following, we shall prove the well-posedness of \eqref{absee2},
		which implies the desired result by setting $\mu =1$ and $f_{0}(\cdot )=0.$
		
		When $\mu =0$, ABSEE \eqref{absee2} is a linear equation, and by a standard argument (see, e.g.,  \cite[Proposition 3.2]{10dm}) one can show that \eqref{absee2} has a unique solution for any  $f_{0}(\cdot
		)\in L_{\mathbb{F}}^{1,2}(0,T;H)$.   This well-posedness result can be
		extended to all $\mu \in \lbrack 0,1]$ as follows.
		
		Suppose that equation \eqref{absee2} admits a unique solution for all $%
		f_{0}(\cdot )\in L_{\mathbb{F}}^{1,2}(0,T;H)$ and some fixed $\mu _{0}\in[0,1)$. Then, for an arbitrary fixed $f_{0}(\cdot )\in L_{\mathbb{F}%
		}^{1,2}(0,T;H)$, any given $(P(\cdot ),Q(\cdot ))\in {\mathscr P}\times L_{\mathbb{F}}^{2}(0,T+K;\mathcal{L}%
		_{2}^{0})$ with $P(t)=\xi(t)$ and $Q(t)=\eta(t)$ for $t\in  [T,T+K]$, and some $\mu \in \lbrack 0,1]$ to be determined, the following
		ABSEE 
		\begin{equation}  \label{absee3}
			\left\{ \begin{aligned} -dp(t)=&\,\,
				\Big\{\mathcal{M}(t)p(t)+\mathcal{N}(t)q(t)+\mu
				_{0}\mathbb{E}^{\mathcal{F}_{t}}\big[g(t,p_{t+},q_{t+})\big]+f_{0}(t) \\ &
				\qquad +(\mu -\mu
				_{0})\mathbb{E}^{\mathcal{F}_{t}}\big[g(t,P_{t+},Q_{t+})\big]\Big\}dt \\ &
				\quad -q(t)dw(t),\quad t\in \lbrack 0,T], \\ 
				p(t)=&\xi(t),\,\,q(t)=\eta(t),\quad t\in [T,T+K], \end{aligned} \right.
		\end{equation}
		admits a unique solution $(p(\cdot ),q(\cdot ))\in {\mathscr P}\times L_{\mathbb{F}}^{2}(0,T+K;\mathcal{L}%
		_{2}^{0})$. By this, we can define the solution mapping $I:{\mathscr P}\times L_{\mathbb{F}}^{2}(0,T+K;\mathcal{L}%
		_{2}^{0})\to {\mathscr P}\times L_{\mathbb{F}%
		}^{2}(0,T+K ;\mathcal{L}_{2}^{0})$ by 
		\begin{equation*}
			(P,Q)\mapsto I(P,Q):=(p,q).
		\end{equation*}%
		Given $(P_{1}(\cdot ),Q_{1}(\cdot )),(P_{2}(\cdot ),Q_{2}(\cdot ))\in {\mathscr P}\times L_{\mathbb{F}}^{2}(0,T+K;\mathcal{L}_{2}^{0})$, it follows from Theorem~\ref{es-p} that
		\begin{align*}
			& \mathbb{E}\Big[\int_{0}^{T}\Vert p_{1}(t)-p_{2}(t)\Vert
			_{V}^{2}dt+\int_{0}^{T}\Vert q_{1}(t)-q_{2}(t)\Vert _{\mathcal{L}%
				_{2}^{0}}^{2}dt\Big] \\
			& \leq C|\mu -\mu _{0}|^{2}\mathbb{E}\Big[\int_{0}^{T}\Vert
			P_{1}(t)-P_{2}(t)\Vert _{V}^{2}dt+\int_{0}^{T}\Vert Q_{1}(t)-Q_{2}(t)\Vert _{%
				\mathcal{L}_{2}^{0}}^{2}dt\Big],
		\end{align*}%
		where $C$ is a positive constant independent of $\mu$. Thus, for $\mu \in
		\lbrack \mu _{0}-\frac{1}{\sqrt{2C}},\mu _{0}+\frac{1}{\sqrt{2C}}]$, the
		solution mapping $I$ is a contraction on ${\mathscr P}\times L_{\mathbb{F}}^{2}(0,T+K;\mathcal{L}_{2}^{0})$, which
		implies the well-posedness of (\ref{absee3}). So starting with $\mu_0=0$ and
		repeating the above procedure, we can prove that there exists a unique
		solution to (\ref{absee2}) for all $\mu\in \lbrack 0,1]$.
		
		\textit{Step 2. The case of $\zeta$ taking values in $V$.}  In this step, we shall use the technique of  solution translation to remove the running terminal condition. More precisely,   denote 
		\begin{equation*}
			\alpha(t)=\int_{(0,t]}\zeta(s)dF(s),\  t\in [0,T]
		\end{equation*}
		and 
		\begin{equation*}
			\bar p(t)=p(t)+\alpha(t),\ t\in [0,T]\ \text{and} \ { \bar p(t)=\xi(t)},\quad t\in
			(T,T+K].
		\end{equation*}
		Then we can rewrite \eqref{absee-0} as 
		\begin{equation*}
			\left\{ \begin{aligned} \bar{p}(t)=&\,\, \xi(T)+
				\alpha(T)+\int_t^T\Big\{\mathcal{M}(s)\bar{p}(s)-\mathcal{M}(s)\alpha(s)+%
				\mathcal{N}(s)q(s)\\
				&+\mathbb{E}^{\mathcal{F}_{s}}\big[g(s,\bar{p}_{s+}-\alpha_{s+},q_{s+})\big]%
				\Big\}ds \\ & -\int_t^Tq(s)dw(s),\quad t\in \lbrack 0,T], \\
				\bar{p}(t)=&\xi(t),\,q(t)=\eta(t),\quad t\in (T,T+K ]. \end{aligned}\right.
		\end{equation*}
		By Step 1, we know that the above equation admits a unique solution $(\bar{p}%
		(\cdot),q(\cdot))\in {\mathscr P}\times L_{\mathbb{F}%
		}^{2}(0,T+K;\mathcal{L}_{2}^{0} )$. Then it is easy to check that $(%
		\bar{p}(\cdot)-\alpha(\cdot),q(\cdot))\in {\mathscr P}
		\times L_{\mathbb{F}}^{2}(0,T+K;\mathcal{L}_{2}^{0})$ is a
		solution to (\ref{absee-0}).
		
		\textit{Step 3. The case of $\zeta$ taking values in $H$.} Consider the
		following approximation equations, for $n\ge 1$, 
		\begin{equation}
			\left\{ \begin{aligned} p^n(t)=&\,\,
				\xi(T)+\int_{(t,T]}\zeta^n(s)dF(s)+\int_t^T\Big\{\mathcal{M}(s)p^n(s)+%
				\mathcal{N}(s)q^n(s)\\ &+\mathbb{E}^{\mathcal{F}_{s}}\big[g(s,p^n_{s
					{+}},q^n_{s {+}})\big]\Big\}ds\\ & -\int_t^Tq^n(s)dw(s),\quad t\in \lbrack
				0,T], \\ p^n(t)=&\,\, \xi(t),\,q^n(t)=\eta(t),\quad t\in (T,T+K], \end{aligned}%
			\right.  \label{absee-01}
		\end{equation}
		where $\zeta^{n}$ belongs to $L_{\mathbb{F},F}^{2}(0,T;V)$ and converges to $\zeta\in L_{\mathbb{F},F}^{2}(0,T;H)$, as $%
		n $ goes to infinity. By Step~2, for each $n$, ABSEE (\ref{absee-01}) has a
		unique solution $(p^{n},q^{n})\in {\mathscr P}\times L_{%
			\mathbb{F}}^{2}(0,T+K;\mathcal{L}_{2}^{0})$. Using (\ref
		{bes-difference}) in Theorem~\ref{es-p}, we have 
		\begin{align*}
			& \mathbb{E}\big[\sup\limits_{t\in[0,T]}\Vert p^{n}(t)-p^{m}(t)\Vert
			_{H}^{2}\big]+\mathbb{E}\int_{0}^{T}\Vert q^{n}(t)-q^{m}(t)\Vert _{\mathcal{L
				}_{2}^{0}}^{2}dt+\mathbb{E}\int_{0}^{T}\Vert p^{n}(t)-p^{m}(t)\Vert_{V}^{2}dt
			\\
			& \leq C\Big\{\mathbb{E}\int_{(0,T]}\Vert\zeta^{n}(t)-\zeta^{m}(t)\Vert _{
				H}^{2}\Delta F(t)dF(t)+\mathbb{E}\Big(\int_{(0,T]}\Vert\zeta^{n}(t)-%
			\zeta^{m}(t)\Vert_{H}d|F|_v(t)\Big)^{2}\Big\} \\
			& \leq CK_{F}\mathbb{E}\int_{(0,T]}\Vert\zeta^{n}(t)-\zeta^{m}(t)\Vert
			_{H}^{2}d|F|_v(t),
		\end{align*}
	 	where the constant $K_F$ is from  assumption (B5). Hence, $p^{n}$ is a Cauchy
		sequence in ${\mathscr P}$ 
		with limit denoted by $p$, and $q^{n}$ is Cauchy sequence in $L_{\mathbb{F}%
		}^{2}(0,{T+K};\mathcal{L}_{2}^{0})$ with limit denoted by $q.$
		
		Finally, we deduce that $(p,q)$ satisfies (\ref{absee-0}) by combining the
		following estimates: for each $t\in [0,T]$, as $n\to \infty$, 
		\begin{align*}
			& \mathbb{E}\Big\Vert\int_{(t,T]}\big(\zeta^{n}(s)-\zeta(s)\big)dF(s)
			\Big\Vert _{H}^{2} \leq K_{F}\mathbb{E}\int_{(t,T]}\Vert\zeta^{n}(s)-\zeta
			(s)\Vert_{H}^{2}d|F|_v(s)\rightarrow0, \\
			&\mathbb{E}\Big\Vert\int_{t}^{T}\big(\mathcal{M}(s)p^{n}(s)-\mathcal{M}(s)p(s)\big)ds%
			\Big\Vert _{V^{\ast}}^{2}  \notag \\
			&\leq T\mathbb{E}\int_{t}^{T}\Vert\mathcal{M}(s)p^{n}(s)-\mathcal{M}
			(s)p(s)\Vert_{V^{\ast}}^{2}ds \leq TK_{1}\mathbb{E}\int_{t} ^{T}\Vert
			p^{n}(s)-p(s)\Vert_{V}^{2}ds\rightarrow0, \\
			&\mathbb{E}\Big\Vert\int_{t}^{T}\big( g(s,p_{s{+}}^{n}(s),q_{s{+}
			}^{n}(s))-g(s,p_{s{+}}(s),q_{s{+}}(s))\big)ds\Big\Vert_{H}^{2} 
			\notag \\
			&\leq T\mathbb{E}\int_{t}^{T}\Vert g(s,p_{s{+}%
			}^{n}(s),q_{s_{+}}^{n}(s))-g(s,p_{s{+}}(s),q_{s{+}}(s))\Vert_{H}^{2}ds 
			\notag \\
			& \leq C\mathbb{E}\int_{t}^{T}\Big\{\Vert p^{n}(s)-p(s)\Vert_{V}^{2}+\Vert
			q^{n}(s)-q(s)\Vert_{\mathcal{L}_{2}^{0}}^{2}\Big\}ds\rightarrow0, \\
			&\mathbb{E}\Big\Vert\int_{t}^{T}\big(\mathcal{N}(s)q^{n}(s)-\mathcal{N}(s)q(s)\big)ds%
			\Big\Vert _{V^{\ast}}^{2}  \notag \\
			&\leq T\mathbb{E}\int_{t}^{T}\Vert\mathcal{N}(s)q^{n} (s)-\mathcal{N}%
			(s)q(s)\Vert_{V^{\ast}}^{2}ds \leq C\mathbb{E}\int_{t}^{T}\Vert
			q^{n}(s)-q(s)\Vert_{\mathcal{L}_{2}^{0}}^{2}ds\rightarrow0,
		\end{align*}
		and 
		\begin{align*}
			\mathbb{E}\Big\Vert\int_{t}^{T}\big(q^{n}(s)-q(s)\big)dw(s)\Big\Vert%
			_{H}^{2}=\mathbb{E}\int_{t}^{T}\Vert q^{n}(s)-q(s)\Vert_{\mathcal{L}%
				_{2}^{0}}^{2}ds\rightarrow0.
		\end{align*}
		The proof is concluded.
	\end{proof}
	
	\begin{remark}\label{rem:GM21}
		{ When $H=V=\mathbb{R}
			^{n}$,  $dF$ induces a finite 
			{(deterministic)} measure, the path dependence on $p$ and $q$ takes the form of an integral with respect to a { prescribed} finite 
			measure,  and the generator $g$ is linear, the equation 
			\eqref{absee-0} reduces to the ABSDE studied in \cite[Theorem 2.4]
			{guatteri2021stochastic} where the well-posedness was established.}
	\end{remark}

	\begin{remark}
		When adding a new term $\zeta$ to the ABSEE in Step 2 of the proof, we
		first consider the case of $V$-valued process $\zeta$, as the operator $%
		\mathcal{M}(t)$ acts only on the space $V$. On the other hand, the arguments in Step 2 remain valid for
		a general $V$-valued process $\alpha\in L_{\mathbb{F}%
		}^{1,2}(0,T;V)$ satisfying $\alpha(T)\in L^{2}(\mathcal{F}_T;H)$.
		Furthermore, if we assume the domain of $\mathcal{M}(t)$ is $H$, the arguments in Step 2 hold for a general $H$-valued process $\alpha\in L_{\mathbb{F}%
		}^{1,2}(0,T;H)$ satisfying $\alpha(T)\in L^{2}(\mathcal{F}_T;H)$. In particular, these extensions apply to the finite-dimensional case (i.e.,  when  $H=V=\mathbb R^n$).
		
	\end{remark}
	
	\section{ 
		{Path derivative and its adjoint operator}}
	
	\label{sec:path-derivative}

	In this section, we study the non-anticipative (or adapted) path
	derivative  and its { adjoint (dual) operator} that will be used in the derivation of
	the maximum principle in Section \ref{sec:SMP}.
	
	For a process $x$  on  $[T_1,T_2]$ with $T_1<T_2$, for $T_1\leq t_1\leq t_2\leq T_2$,  we define a process $x_{t_1,t_2}$  by
	\begin{align*}
		x_{t_1,t_2}(s):=x(t_1)\mathbb I_{[T_1, t_1)}(s) + x(s) \mathbb I_{[t_1, t_2]}(s)+ x(t_2) \mathbb I_{(t_2,T_2]}(s), \ s\in [T_1,T_2].
	\end{align*}

	Let $T>0$ and $K\ge0$ be fixed constants, and $E,F$ be  separable
	Hilbert spaces. 
	For $t\in[0,T]$, we define the subspace of $C(-K,T; E)$:
	\begin{equation}  \label{e:Ct}
		\begin{aligned} C_t(-K, T; E) :=&\Big\{ {x}_{t-K,t}=\big\{
			{x}_{t-K,t}(s), \forall s\in[-K, T]\big\}: ~ x\in C(-K, T;E)\Big\}.
		\end{aligned}
	\end{equation}

	Let $a:[0,T]\times {C(-K,T;E)} \to F$ be a Borel measurable function which is Fr\'echet differentiable in $x\in C(-K,T;E)$. Denote \begin{equation*}
		\hat a(t,x):=a(t, x_{t-K,t}),~ (t,x)\in [0,T]\times {C(-K,T;E)}. \end{equation*} 
	Clearly,  $\hat a(t,x) $ is also Frech\'et differentiable in $x$, and we denote  its derivative operator by 
	\begin{equation}  \label{e:extension}
		\rho_{x,t}(Z):=\partial_x\hat a(t,x)(Z)=\partial_x a(t, x_{t-K,t})(Z_{t-K,t}), ~ Z\in C(-K, T;E).
	\end{equation}
	Then the following \emph{non-anticipative condition} automatically holds for the operator $\rho_{x,t}$: \begin{equation}\label{non-anti}
		\rho_{x,t}(Z)=\rho_{x,t}(Z_{t-K,t}),\, \text{for}\, Z\in C(-K, T;E) \text{ and } t\in[0,T].
	\end{equation} 
	
	In the rest of this section, we often fix a path $x\in C(-K, T;E)$ and shall  omit
	the dependence on $x$ in notations for the sake of simplicity. For instance,
	we denote $\rho_t:=\rho_{x,t}=%
	\partial_x \hat a(t,x)$.

	We define the following right shift operator $\theta_t$ by, for a process $\bar{Z}$ on $[-K,0]$,
	\begin{equation*}
		(\theta _{t}\bar{Z})(s):=\bar{Z}(t-K)\mathbb I_{[-K,t-K)}(s)+\bar{Z}
		(s-t)\mathbb I_{[t-K,t]}(s)+\bar{Z}(0)\mathbb I_{(t,T]}(s), \, \forall s\in[-K,T],
	\end{equation*}
	which is a process on $[-K,T]$, and belongs to $C_t(-K,T;E)$ if $\bar{Z}\in C(-K,0;E).$
	The inverse operator $\theta_{-t}$ is defined by, for a process  ${Z}$ on $[-K,T]$,
	\begin{equation*}
		(\theta _{-t}Z)(s):=Z(s+t),\, \forall s\in[-K,0],
	\end{equation*}
	which is a process on $[-K,0]$, and belongs to $C(-K,0;E)$ if ${Z}\in C(-K,T;E).$
	We set, for $\bar{Z}\in C(-K,0;E)$, \[\bar{\rho}_{t}(\bar{Z}):=\rho _{t}(\theta _{t}\bar{Z})=\partial_x a(t, x_{t-K,t})((\theta _{t}\bar{Z})_{t-K,t})=\partial_x a(t, x_{t-K,t})(\theta _{t}\bar{Z}).\]
	Then $\bar{\rho}_{t}(\theta _{-t}Z)=\rho _{t}(\theta _{t}(\theta _{-t}Z))=\rho
	_{t}(Z)$, for  ${Z}\in C(-K,T;E)$.
	
	For each fixed $t\in[0,T]$, it is direct to see that $\bar{\rho}_{t}$ is a bounded linear operator from $%
	{C(-K,0;E)}$ to $F.$  By  the Dinculeanu-Singer Theorem (see, e.g., p.182 of \cite
	{1977duj}),  there exists a  finitely additive  $\mathcal{L}(E,F)$-valued measure $\nu
	(t,ds):=\nu(x, t, ds)$ on $[-K,0]$, such that\footnote{In general, for $a\leq b$, the integral with respect to a generic measure
		may be different on intervals such as $[a,b],(a,b],[a,b)$ and $(a,b)$. In
		this paper, for notational simplicity, we use the convention $%
		\int_{a}^{b}:=\int_{[a,b]}$.} 
	\begin{equation}  \label{e:barrho}
		\bar{\rho}_{t}(\bar{Z})=\int_{-K}^{0}\bar{Z}(s)\nu (t,ds),~\bar Z\in C(-K,
		0;E),
	\end{equation}
	with
	\begin{equation}\label{e:norm-rho-t}
		\|\bar \rho_t\|_{\mathcal L(C(-K,0;E),F)}=\|\rho_t\|_{\mathcal L(C_t(-K,T;E),F)}=\| \nu\|_v(t,[-K,0]),
	\end{equation}
	where $\|\nu\|_v(t,\cdot)$ is the \emph{semivariation} of $\nu(t,\cdot)$  (see Definition 4 on p.2 and Proposition 11 on p.4 of \cite{1977duj}): for $A\in\mathcal B([-K,0])$,
	\begin{align*}
		\|\nu\|_v(t,A): = &\sup \Big\{ \Big\| \sum_i^n v(t,A_i) x_i \Big\|_{\mathcal{L}(E,F)} :\ x_i\in \mathbb R,\ |x_i| \leq 1,\ \{A_{i},1\leq i\leq n\}\subset 
		\mathcal{B}([-K,0])\\ 
		&\hspace{1cm}\text{is a partition of}\ A,\ n\geq1 \Big\}.  
	\end{align*}
	
	In the remaining of this paper, we shall refer to $\nu(t,\cdot)$  as the ``representing measure'' of $ \rho_t=\partial_x \hat a(t,x)$ and $ \bar\rho_t$.

	Let $|\nu|_v(t,\cdot)$  denote the \emph{variation} of $\nu(t,\cdot)$, which is defined as (see Definition 4 in  \cite{1977duj}): 
	for $A\in \mathcal{B}([-K,0])$, 
	\begin{align*}
	|\nu |_{v}(t,A):=\sup \Big\{	&\sum_{i=1}^{n}\Big\Vert \nu
		(t,A_{i})\Big\Vert _{\mathcal{L}(E,F)}:\{A_{i},1\leq i\leq n\}\subset 
		\mathcal{B}([-K,0])\\& \text{ is a partition of }A,\ n\geq1\Big\}.
	\end{align*}
	For a fixed $t\in[0,T], \|\nu\|_{v}(t,A)\le |\nu |_{v}(t,A)$ for any $A\in \mathcal B([-K,0])$. Moreover, if both $E$ and $F$ are finite-dimensional, we have that $\|\nu\|_{v}(t,[-K,0])<\infty$ if and only if $|\nu|_{v}(t,[-K,0])<\infty$, while this is not the case if the dimension of $E$ or $F$ is infinite.

	Note that the Dinculeanu-Singer Theorem only implies that $\nu(t, \cdot)$ has a bounded semivariation, i.e., $\|\nu\|_v(t,[-K,0])<\infty$. For our purpose, we shall assume the following  uniform  boundedness condition for the variation of $\{\nu(t,\cdot)\}_{t\in[0,T]}$.
	\begin{assumptionp}{(C0)}\label{(C0)} The vector measure 
		$\nu(t,\cdot)$ is $\sigma$-additive  for all $t\in [0,T]$ and satisfying
		\begin{equation}\label{e:N0}
			M_0:=\sup_{t\in[0,T]} |\nu |_{v}(t,[-K,0])
			<\infty. \end{equation} \end{assumptionp}

	Note that the $\sigma$-additivity of  $\nu(t,\cdot)$ implies  that of $|\nu |_{v}(t,\cdot)$  and vice versa (see Proposition 9 on p. 3 of \cite{1977duj}). 
	
	\begin{remark}\label{rem:M0}
		As mentioned above, when $E$ and $F$ are finite-dimensional, the semivariation and variation are equivalent, and hence \eqref{e:N0} is equivalent to, in view of \eqref{e:norm-rho-t}, 
		\begin{equation}\sup_{t\in[0,T]}\|\rho_t\|_{\mathcal L(C(-K,T;E),F)}<\infty.
		\end{equation}
	\end{remark}

	By a limiting argument, we can extend $\bar\rho_t(\bar Z)$ in %
	\eqref{e:barrho} to all $\bar{Z}\in L^{1}_{\nu (t,\cdot )}(-K,0;E)$ such that
	\begin{equation*}
		\bar\rho_t(\bar Z)=\int_{-K}^{0}\bar{Z}(s)\nu (t,ds),~\bar Z\in L^{1}_{\nu
			(t,\cdot )}(-K,0;E),
	\end{equation*}
	where in general we denote, for $a<b$, $p\ge 1$, and an $\mathcal{L}(E,F)$-valued vector measure $\mu$, 
	\begin{align*}
		L^{p}_{\mu}(a,b;E):=\Big\{&f:[a,b]\to E \text{ is a measurable function such
			that }\\& ~\int_a^b \|f(s)\|^p_E |\mu|_v(ds)<\infty\Big\}.
	\end{align*}
	Correspondingly, we define the extension of $\rho _{t}$  by
	\begin{equation}\label{e:rho}
		\rho _{t}(Z):=\bar{\rho}_{t}(\theta _{-t}Z)=\int_{-K}^{0}Z(t+s)\nu (t,ds), 
	\end{equation}
	for $Z\ \text{on}\ [-K,T]$ satisfying  
	$\theta_{-t} Z\in L^{1}_{\nu (t,\cdot)}(-K,0;E)$.
	
	We also assume:
	\begin{assumptionp}{(C1)}\label{(C1)}
		There exists a finite measure $\nu _{0}(\cdot)=\nu_0(x,\cdot)$ on $[-K,0]$
		such that $|\nu |_v(t, \cdot)=|\nu |_v(x,t, \cdot)$ is absolutely continuous
		with respect to $\nu _{0}(\cdot)$ for all $t\in [0,T]$.
	\end{assumptionp}
	
	Assuming \ref{(C1)}, for each fixed $t\in [0,T]$, by Radon-Nikodym theorem for
	operator-valued measures (see, e.g., \cite[Theorem 3.3.2]{14r} and \cite[Theorem 2.5]{ll2017}), there
	exists a weakly measurable ( see \cite
	[Chapter 1]{81k} and  \cite
	[Section 2]{liu2021maximum} for the definition) operator-valued function $k(t, \cdot)=k(x, t,
	\cdot): [ -K,0]\to \mathcal{L}(E, F)$ such that 
	\begin{equation}  \label{e:rd}
		\nu (t,ds)=\frac{\nu (t,ds)}{\nu _{0}(ds)}\nu _{0}(ds)=k(t,s)\nu _{0}(ds).
	\end{equation}
	Then $\rho_t$ can be written as:
	\begin{equation}\label{eq:d-rho}
		\rho _{t}(Z) = \int_{-K}^{0}Z(t+s)k(t,s)\nu
		_{0}(ds).
	\end{equation}
	We note that condition \eqref{e:N0} in Assumption~\ref{(C0)} is equivalent to:
	\begin{assumptionp} {(C0$\hspace{0.3mm}'$)}\label{(C0')}Assume 
		\begin{equation}\label{e:M0}
			M_0:=\sup_{t\in[0,T]} \int_{-K}^0 \|k(t,s)\|_{\mathcal L(E,F)}\nu_0(ds)
			<\infty. \end{equation}
	\end{assumptionp}
	We 
	have the following result.
	\begin{lemma}
		\label{lem:k-wm} The mapping $[0,T]\times \lbrack -K,0]\ni (t,s)\mapsto
		k(t,s)\in\mathcal L(E,F)$ is weakly measurable.
	\end{lemma}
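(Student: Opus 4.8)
The plan is to verify weak measurability directly from its definition: for each fixed $e\in E$ and $f\in F$, I will exhibit a jointly $\mathcal B([0,T])\otimes\mathcal B([-K,0])$-measurable version of the scalar function $(t,s)\mapsto\langle k(t,s)e,f\rangle_F$. The argument has two stages—first measurability in the time variable $t$ of the scalar measures $t\mapsto\langle\nu(t,A)e,f\rangle_F$, and then a martingale (conditional-expectation) approximation that upgrades this to joint measurability of the Radon--Nikodym density. For the first stage I fix $\bar Z\in C(-K,0;E)$ and write $\bar\rho_t(\bar Z)$ as a directional (G\^ateaux) derivative,
\[ \langle\bar\rho_t(\bar Z),f\rangle_F=\lim_{n\to\infty} n\,\big\langle a\big(t,\,x_{t-K,t}+n^{-1}\theta_t\bar Z\big)-a\big(t,\,x_{t-K,t}\big),\,f\big\rangle_F. \]
Since the curves $t\mapsto x_{t-K,t}$ and $t\mapsto\theta_t\bar Z$ are continuous from $[0,T]$ into $C(-K,T;E)$ (a routine consequence of the uniform continuity of $x$ and $\bar Z$), the map $t\mapsto\big(t,\,x_{t-K,t}+n^{-1}\theta_t\bar Z\big)$ is continuous into $[0,T]\times C(-K,T;E)$; composing with the Borel function $a$ shows each difference quotient, and hence the limit, is measurable in $t$. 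Thus $t\mapsto\langle\bar\rho_t(\bar Z),f\rangle_F$ is measurable for every continuous $\bar Z$.

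Next I would pass from continuous test functions to indicators. Fix a Borel set $A\subseteq[-K,0]$ and $e\in E$. By regularity of the finite measure $\nu_0$, choose continuous $\phi_n:[-K,0]\to[0,1]$ with $\phi_n\to\mathbb{I}_A$ in $L^1_{\nu_0}$ and, after passing to a subsequence, $\nu_0$-a.e. Using \eqref{e:rd} and the standard identity that the variation of $\nu(t,\cdot)$ has $\nu_0$-density $\|k(t,\cdot)\|_{\mathcal L(E,F)}$ (together with Assumption~\ref{(C1)}), I get for each fixed $t$ the domination $|\langle\nu(t,ds)e,f\rangle_F|\le\|e\|\,\|f\|\,\|k(t,s)\|_{\mathcal L(E,F)}\,\nu_0(ds)$, where $\|k(t,\cdot)\|_{\mathcal L(E,F)}\in L^1_{\nu_0}$ by Assumption~\ref{(C0')}. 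Dominated convergence then yields $\langle\bar\rho_t(\phi_n e),f\rangle_F\to\langle\nu(t,A)e,f\rangle_F$, so as a pointwise limit of measurable functions, $t\mapsto\langle\nu(t,A)e,f\rangle_F$ is measurable for every Borel $A$, $e$, $f$.

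For the joint measurability I would take an increasing sequence $\{\mathcal P_m\}_{m\ge1}$ of finite Borel partitions of $[-K,0]$ generating $\mathcal B([-K,0])$ and set
\[ \kappa_m^{e,f}(t,s):=\sum_{A\in\mathcal P_m,\ \nu_0(A)>0}\frac{\langle\nu(t,A)e,f\rangle_F}{\nu_0(A)}\,\mathbb{I}_A(s). \]
Each $\kappa_m^{e,f}$ is jointly measurable, being a finite sum of products of a function measurable in $t$ (by the previous step) and one measurable in $s$. For fixed $t$, $\kappa_m^{e,f}(t,\cdot)$ is exactly the $\nu_0$-conditional expectation of $\langle k(t,\cdot)e,f\rangle_F$ given $\sigma(\mathcal P_m)$, and since $\langle k(t,\cdot)e,f\rangle_F\in L^1_{\nu_0}$ by Assumption~\ref{(C0')}, the martingale convergence theorem gives $\kappa_m^{e,f}(t,\cdot)\to\langle k(t,\cdot)e,f\rangle_F$ $\nu_0$-a.e. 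Hence $(t,s)\mapsto\limsup_{m\to\infty}\kappa_m^{e,f}(t,s)$ is jointly measurable and agrees with $\langle k(t,s)e,f\rangle_F$ for $\nu_0$-almost every $s$, for each $t$. Because $k(t,\cdot)$ is determined only up to $\nu_0$-null sets and is used throughout solely inside $\nu_0$-integrals (see \eqref{e:rd}–\eqref{eq:d-rho}), I am free to replace it, for each pair $(e,f)$, by this jointly measurable representative, which is precisely the asserted weak measurability of $k$.

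The main obstacle will be the first stage—upgrading measurability in $t$ from continuous test functions $\bar Z$ to the indicator data $\mathbb{I}_A e$. This is where the differentiability of $a$ along the continuous curves $t\mapsto x_{t-K,t},\ \theta_t\bar Z$ and the uniform variation bound of Assumption~\ref{(C0')} are both essential: without the $L^1_{\nu_0}$-domination of $\|k(t,\cdot)\|_{\mathcal L(E,F)}$ one cannot pass the limit $\phi_n\to\mathbb{I}_A$ through the integral against $\nu(t,\cdot)$. Once this is in place the second stage is a standard martingale argument, the only point requiring care being that the per-pair modification on $\nu_0$-null sets is harmless—which holds because every occurrence of $k$ is through integrals against $\nu_0$.
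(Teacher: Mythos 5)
Your proof is correct and follows essentially the same route as the paper's: $t$-measurability of $t\mapsto\bar\rho_t(\bar Z)$ via the difference-quotient representation of the Fr\'echet derivative along the continuous curves $t\mapsto x_{t-K,t}$ and $t\mapsto\theta_t\bar Z$, passage from continuous test functions to indicators by dominated convergence (using the $L^1_{\nu_0}$-bound on $\|k(t,\cdot)\|_{\mathcal L(E,F)}$), and finally joint measurability of the parametrized Radon--Nikodym density. The only divergence is the last step, where the paper cites Dellacherie--Meyer (Theorem 58, p.~52) for a jointly measurable version of the density while you reprove that fact with a standard partition/martingale-convergence argument; both yield only a jointly measurable \emph{version} of $\langle k(\cdot,\cdot)e,f\rangle_F$ (modified on $t$-dependent $\nu_0$-null sets), which is all that is needed since $k$ enters the paper only through integrals against $\nu_0$.
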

	
	\begin{proof} 
		We define a mapping $G:[0,T]\times C(-K,0;E)\times C(-K,T;E)\rightarrow [0,T]\times 
		{C(-K,T;E)}\times C(-K,T;E)$ by $
		G(t,\bar{Z},x)=(t,\theta _{t}\bar{Z}, x_{t-K,t})$, which is Borel measurable.  We also define   $J:[0,T]\times C(-K,T;E)\times C(-K,T;E)\rightarrow F$ by 
		$ J(t,Z,\bar x)=\partial_x a(t, \bar x)(Z)$, which is also  a
		Borel measurable mapping by noting 
		\begin{equation*}
			\partial_x a(t,\bar  x)(Z)=\lim_{\alpha \rightarrow 0}\frac{a(t,\bar x+\alpha {Z})-a(t,\bar x)}{\alpha}.
		\end{equation*}%
		Then, the composition $\bar{\rho}_{ x,t}({\bar{Z}})=\rho _{x,t}(\theta _{t}\bar{Z}%
		)=\partial_x a(t, x_{t-K,t})(\theta _{t}\bar{Z})=J(G((t,x,\bar{Z})))$ is a Borel measurable mapping from $[0,T]\times C(-K,0;E)\times C(-K,T;E)$ to $F$.  In particular,  $[0,T]\ni t\mapsto 
		\bar{\rho}_{t}({\bar{Z}})=\bar{\rho}_{x,t}({\bar{Z}})\in F$ is measurable for any fixed $(x,\bar{Z})\in C(-K,T;E)\times C(-K,0;E)$. For a bounded measurable $\bar{Z}:[-K,0]\rightarrow E,$ we can find a uniformly bounded
		sequence $\bar{Z}^{n}\in C(-K,0{;E})$ such that $\bar{Z}^{n}\rightarrow $ $%
		\bar{Z}$ in measure $\nu _{0}.$ Then by the dominated convergence theorem,
		for each $t\in \lbrack 0,T],$ we have  as $n\to \infty$ that
		\begin{equation*}
			\bar{\rho}_{t}({\bar{Z}}^{n})=\int_{-K}^{0}k(t,s)\bar{Z}^{n}(s)\nu
			_{0}(ds)\rightarrow \int_{-K}^{0}k(t,s)\bar{Z}(s)\nu _{0}(ds)=\bar{\rho}_{t}(%
			{\bar{Z}}).
		\end{equation*}%
		Hence, $[0,T]\ni t\mapsto \bar{\rho}_{t}({\bar{Z}})\in F$
		is measurable.

		Now, for fixed  $A\in \mathcal{B}([-K,0])$ and $e\in E$, we take ${
			\bar{Z}}(s)=e\mathbb{I}_{A}(s)$,   which is clearly bounded measurable. Then from
		\begin{equation*}
			\bar{\rho}_{t}({\bar{Z}})=\int_{-K}^{0}\mathbb{I}_{A}{(s)}%
			k(t,s)e\nu _{0}(ds)=\nu (t,A)e,
		\end{equation*}%
		we get that $[0,T]\ni t\mapsto \nu (t,A)e\in F$ is measurable, which implies $t\mapsto \nu
		(t,A)$ is weakly measurable. Note that for any $(e,f)\in E\times F$, 
		\begin{equation*}
			\left\langle \nu (t,A)e,f\right\rangle _{F}=\int_{A}\left\langle
			k(t,s)e,f\right\rangle _{F}\nu _{0}(ds),
		\end{equation*}
		which indicates that $s\mapsto \left\langle k(t,s)e,f\right\rangle _{F}$ is the classical
		Radon-Nikodym derivative of the (real-valued signed) measure $\left\langle
		\nu (t,\cdot )e,f\right\rangle _{F}$ with respect to $\nu _{0}$. Hence, $[0,T]\times \lbrack
		-K,0]\ni (t,s)\mapsto \left\langle k(t,s)e,f\right\rangle _{F}\in \mathbb{R}$
		is measurable   by \cite[Theorem 58 in p.52]{1982dm}, and thus $(t,s)\mapsto k(t,s)$ is weakly measurable.\end{proof}
	
	The following lemma allows us to employ the technique of change of variables, which is frequently used in this section.
	\begin{lemma}\label{inte-trans-lem}
		Let $g(t,s):[0,T]\times \lbrack
		-K,0]\rightarrow \mathbb{R}$ be a measurable function.
		Then for any $K'\in [-K,T],$
		\begin{equation}\label{e:change-variable}
			\int_{0}^{T}\int_{-K}^{0}g(t,s)\mathbb{I}_{[K'-t,0]}(s)\nu
			_{0}(ds) dt=\int_{K'}^{T}\int_{-K}^{0}g(u-v,v)\mathbb{I}_{[u-T,u]}(v)\nu
			_{0}(dv)du,
		\end{equation}
		provided that the integral on either side of \eqref{e:change-variable}  is well defined. 
	\end{lemma}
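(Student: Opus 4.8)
The plan is to read the right-hand side as the image of the left under the linear change of variables $(t,s)\mapsto(u,v)=(t+s,s)$, which is merely a shift of the first coordinate by the amount $s$ and therefore leaves the product measure $dt\otimes\nu_0(ds)$ invariant, Lebesgue measure being translation invariant (so there is no Jacobian to track). First I would reduce to the case $g\ge 0$: writing $g=g^+-g^-$, it suffices to prove the identity for each nonnegative part, after which the well-definedness hypothesis guarantees that not both of the resulting integrals are infinite, so the difference makes sense and the identity passes to $g$ by linearity. Since $\nu_0$ is a finite measure on $[-K,0]$ (Assumption~\ref{(C1)}), the measure $dt\otimes\nu_0(ds)$ on $[0,T]\times[-K,0]$ is finite, hence $\sigma$-finite, so Tonelli's theorem applies throughout; moreover $(u,v)\mapsto(u-v,v)$ is continuous, whence the integrand $g(u-v,v)\mathbb I_{[u-T,u]}(v)$ on the right is jointly measurable.

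The core of the argument is a per-$s$ computation. Fix $s\in[-K,0]$ and substitute $u=t+s$ (so $t=u-s$ and $dt=du$) in the inner Lebesgue integral on the left. Since $s\le 0$, the upper constraint in $\mathbb I_{[K'-t,0]}(s)$ is automatic, leaving only the lower constraint $t\ge K'-s$; under the shift this becomes $u\ge K'\vee s$, while the range $t\in[0,T]$ becomes $u\in[s,s+T]$. On the right, $\mathbb I_{[u-T,u]}(s)$ is precisely $\mathbb I_{\{s\le u\le s+T\}}$, and because $s\le 0$ one has $(s+T)\wedge T=s+T$, so intersecting with $u\in[K',T]$ again yields $u\in[K'\vee s,\,s+T]$. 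Both sides thus reduce to the same one-dimensional integral $\int_{K'\vee s}^{s+T}g(u-s,s)\,du$, giving the per-$s$ identity
\begin{equation*}
\int_{0}^{T} g(t,s)\,\mathbb I_{[K'-t,0]}(s)\,dt
=\int_{K'}^{T} g(u-s,s)\,\mathbb I_{[u-T,u]}(s)\,du .
\end{equation*}

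To conclude, I would integrate this per-$s$ identity over $s\in[-K,0]$ against $\nu_0(ds)$: applying Tonelli to the left-hand side recovers the left-hand side of \eqref{e:change-variable} as an iterated integral, while on the right a final use of Fubini--Tonelli interchanges $\nu_0(ds)$ and $du$ to place $du$ outermost, producing the right-hand side of \eqref{e:change-variable}; recombining $g^+$ and $g^-$ then finishes the proof. The computation is elementary, translation invariance of Lebesgue measure doing all the real work, so the only delicate point I expect is the bookkeeping of the integration limits as the parameter $K'\in[-K,T]$ interacts with the sign of $s$ and with the endpoints $0$ and $T$; keeping careful track of $K'\vee s$ and $(s+T)\wedge T=s+T$ is what must be done correctly. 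A secondary technical point is the justification of the two order-interchanges under only the ``well-defined'' hypothesis, which is handled by first establishing the identity in $[0,\infty]$ for $g^\pm$ via Tonelli and invoking finiteness only afterward.
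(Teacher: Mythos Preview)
Your proof is correct and takes essentially the same approach as the paper: both rely on the translation invariance of Lebesgue measure under the affine map $(t,s)\mapsto(t+s,s)$. The paper packages this as the invariance $\mu\circ f^{-1}=\mu$ of the product measure $\mu=dt\otimes\nu_0$ and then applies the abstract change-of-variables formula to the extended integrand $G(t,s)=g(t,s)\mathbb I_{[0,T]}(t)\mathbb I_{[-K,0]}(s)\mathbb I_{[K'-t,0]}(s)$, whereas you carry out the same substitution explicitly for each fixed $s$ and then integrate against $\nu_0$; your careful tracking of the limits $K'\vee s$ and $s+T$ is exactly what the paper's indicator bookkeeping encodes.
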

	
	\begin{proof} 
		Let $B\in \mathcal B(\R^2)$ be a bounded Borel measurable set and  denote $\mu (ds,dt)=\nu _{0}(ds)dt$. For the mapping $f:(t,s)\mapsto (u,v):=(t+s,s)$ from $\R^2$ to $\R^2$,   we have
		\begin{align*}
			&(\mu\circ f^{-1})(B) := \mu(f^{-1}(B))\\
			&=\int_{\mathbb{R}}\int_{\mathbb{R}}\mathbb I_{f^{-1}(B)}(t,s)\,dt
			\,\nu _{0}(ds) \\
			&=\int_{\mathbb{R}}\int_{\mathbb{R}}\mathbb I_{B}(t+s,s)\,dt\,\nu _{0}(ds) \\
			&=\int_{\mathbb{R}} \int_{\mathbb{R}}\mathbb I_{B}(u,s)\,du \nu
			_{0}(ds) \\
			&=\int_{\mathbb{R}}\int_{\mathbb{R}}\mathbb I_{B}(u,v)\,\nu _{0}(dv)\,du= \mu(B).
		\end{align*}
		where the fourth equality follows from the translation invariance of the Lebesgue measure. Thus, we have 
		$(\mu\circ f^{-1})(dv,du)=\mu(dv,du)=\nu _{0}(dv)du$. 
		Denoting $G(t,s)=g(t,s)\mathbb I_{[0,T]}(t) \mathbb I_{[-K,0]}(s)\mathbb{I}_{[K'-t,0]}(s)$, we get 
		\begin{align*}
			\int_{0}^{T}\int_{-K}^{0}g(t,s)\mathbb{I}_{[K'-t,0]}(s)\nu
			_{0}(ds)dt&=\int_{\R^2} G(t,s) \mu(ds,dt)\\
			&=\int_{\R^2} (G(u-v, v)\circ f) (t,s) \mu(ds,dt)\\
			&=\int_{\R^2}G(u-v,v) (\mu\circ f^{-1})(du,dv)\\
			& =\int_{ K'}^{T}\int_{-K}^{0}g(u-v,v)\mathbb{I}_{[u-T,u]}(v)\nu _{0}(dv)du.
		\end{align*}
		The proof is complete.
	\end{proof}

	In order to carry on a proper dual analysis on some Hilbert spaces for the path derivative, we shall  regard  $\rho$ as a
	bounded linear operator mapping from $L^2(-K,T;E)$ to $L^2(0,T;F)$. For this purpose, we impose the following assumption.

\begin{assumptionp}{(C2)}\label{(C2)}
	We  assume 
	\begin{equation}\label{e:M}
		M:=\sup_{t\in \lbrack -K,T]}\int_{-K}^{0}\left\Vert k(t-s,s)\right\Vert _{%
			\mathcal{L}(E,F)}\mathbb I_{[t-T, t]} (s) \nu _{0}(ds)<\infty .\end{equation}
\end{assumptionp}

 The following result holds in a more general setting, so we  write $\varrho$ in place of $\rho$.

\begin{proposition}
	\label{Prop1} Let $\nu _{0}$ be a   finite measure on $[-K,0]$ and $%
	k:[0,T]\times \lbrack -K,0]\rightarrow \mathcal{L}(E,F)$ be a weakly
	measurable operator-valued function satisfying Assumptions  \ref{(C0')} and   \ref{(C2)}. For each $t\in \lbrack
	0,T]$, denote  
	\begin{equation*}
		\varrho _{t}(Z)=\int_{-K}^{0}Z(t+s)k(t,s)\nu _{0}(ds),~Z\in L^{2}(-K,T;E)%
		\text{.}
	\end{equation*}%
	Then $\varrho _{t}(Z)$ is a well-defined Bochner integral for almost all $%
	t\in \lbrack 0,T]$, and moreover,  for each $T'\in(0,T]$,  
	\[ \int_0^{T'} \|\varrho_t(Z)\|^2_F dt \le M_0M \int_{-K}^{T'} \|Z(u)\|_E^2 du, \text{ for all } Z\in L^2(-K, T';E), \]
	where the constants  $M_0$ and $M$ are from Assumption \ref{(C0')} and   \ref{(C2)}. In particular,  
	$\varrho:=\{\varrho_t(\cdot), t\in[0,T]\}$   is a bounded linear operator mapping from $L^2(-K, T;E)$ to $L^2(0,T;F)$. 
\end{proposition}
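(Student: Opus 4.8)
The plan is to prove the quantitative $L^2$ bound first and then read off both the almost-everywhere Bochner integrability and the boundedness of $\varrho$ as consequences. Fix $Z\in L^2(-K,T';E)$ and abbreviate $\|k(t,s)\|:=\|k(t,s)\|_{\mathcal L(E,F)}$. I would begin by checking that, for each fixed $t\in[0,T]$, the integrand $s\mapsto Z(t+s)k(t,s)$ is strongly $F$-measurable, so that the Bochner integral is meaningful once its norm is $\nu_0$-integrable. Since $E$ is separable, $Z(t+\cdot)$ is an a.e.\ limit of $E$-valued simple functions $\sum_i e_i\mathbb I_{A_i}$; for such a function the integrand equals $\sum_i (e_i k(t,s))\mathbb I_{A_i}(s)$, and each $s\mapsto e_i k(t,s)$ is weakly measurable by Lemma~\ref{lem:k-wm}, hence strongly measurable by Pettis's theorem as $F$ is separable. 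Passing to the limit yields strong measurability of the full integrand.

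Next I would establish a pointwise-in-$t$ estimate. From $\|Z(t+s)k(t,s)\|_F\le\|k(t,s)\|\,\|Z(t+s)\|_E$ and the Cauchy--Schwarz inequality with respect to the finite measure $\|k(t,s)\|\nu_0(ds)$, together with Assumption~\ref{(C0')},
\[
\|\varrho_t(Z)\|_F^2\le\Big(\int_{-K}^0\|k(t,s)\|\nu_0(ds)\Big)\Big(\int_{-K}^0\|k(t,s)\|\,\|Z(t+s)\|_E^2\,\nu_0(ds)\Big)\le M_0\int_{-K}^0\|k(t,s)\|\,\|Z(t+s)\|_E^2\,\nu_0(ds).
\]

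Then I would integrate this inequality over $t\in[0,T']$ and change variables via Lemma~\ref{inte-trans-lem}, applied with $T'$ in place of $T$ and $K'=-K$ (so that $\mathbb I_{[-K-t,0]}(s)\equiv1$ on $[-K,0]$); with $g(t,s)=\|k(t,s)\|\,\|Z(t+s)\|_E^2$, noting $g(u-v,v)=\|k(u-v,v)\|\,\|Z(u)\|_E^2$, this gives
\[
\int_0^{T'}\|\varrho_t(Z)\|_F^2\,dt\le M_0\int_{-K}^{T'}\|Z(u)\|_E^2\Big(\int_{-K}^0\|k(u-v,v)\|\,\mathbb I_{[u-T',u]}(v)\,\nu_0(dv)\Big)du.
\]
Because $T'\le T$ yields $\mathbb I_{[u-T',u]}\le\mathbb I_{[u-T,u]}$, Assumption~\ref{(C2)} bounds the inner integral by $M$ for every $u\in[-K,T']$, producing the asserted estimate $\int_0^{T'}\|\varrho_t(Z)\|_F^2\,dt\le M_0M\int_{-K}^{T'}\|Z(u)\|_E^2\,du$.

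Finally, the change-of-variables computation also shows $\int_0^{T'}\!\int_{-K}^0\|k(t,s)\|\,\|Z(t+s)\|_E^2\,\nu_0(ds)\,dt\le M\int_{-K}^{T'}\|Z(u)\|_E^2\,du<\infty$, so for a.e.\ $t$ the inner integral is finite; the pointwise Cauchy--Schwarz bound then makes the $L^1$-majorant $\int_{-K}^0\|k(t,s)\|\,\|Z(t+s)\|_E\,\nu_0(ds)$ finite for a.e.\ $t$, which is exactly the absolute integrability needed for $\varrho_t(Z)$ to be a well-defined Bochner integral. Taking $T'=T$ in the estimate shows that $\varrho$ maps $L^2(-K,T;E)$ boundedly into $L^2(0,T;F)$ with operator norm at most $(M_0M)^{1/2}$. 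I expect the only delicate point to be the bookkeeping in the change of variables, in particular matching the window $\mathbb I_{[u-T',u]}$ produced by the substitution against the window $\mathbb I_{[u-T,u]}$ in the definition of $M$ (which is precisely where $T'\le T$ enters); the measurability step, although needed for the Bochner integral to exist, is routine given Lemma~\ref{lem:k-wm} and Pettis's theorem.
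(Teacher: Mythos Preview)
Your proof is correct and follows essentially the same route as the paper: the pointwise Cauchy--Schwarz bound with Assumption~\ref{(C0')}, the change of variables via Lemma~\ref{inte-trans-lem}, and the use of Assumption~\ref{(C2)} on the inner integral. The only presentational differences are that the paper first establishes the a.e.\ Bochner integrability by a direct $L^1$ estimate (for $Z\in L^1(-K,T;E)$) before turning to the $L^2$ bound, whereas you read off integrability from the $L^2$ computation itself; and you handle the $T'$-versus-$T$ indicator explicitly via $\mathbb I_{[u-T',u]}\le\mathbb I_{[u-T,u]}$, which is in fact cleaner than the paper's chain of equalities at that step.
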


\begin{proof}
	First, we show that for $Z\in L^{1}(-K,T;E),$ $\{\varrho _{t}(Z),t\in
	\lbrack 0,T]\}\in L^{1}(0,T;F)$. By Lemma~\ref{lem:k-wm},  $(t,s)\mapsto k(t,s)$ is weakly measurable and hence $(t,s)\mapsto \Vert k(t,s)\Vert _{
		\mathcal{L}(E,F)}$ is measurable.
	Applying Lemma~\ref{inte-trans-lem},  we get
	\begin{align*}
		& \int_{0}^{T}\int_{-K}^{0}\left\Vert Z(s+t)k(t,s)\right\Vert _{F}\nu
		_{0}(ds)dt \\
		& =\int_{-K}^{T}\left\Vert Z(u)\right\Vert _{E}\int_{-K}^{0}\left\Vert
		k(u-v,v)\right\Vert _{\mathcal{L}(E,F)}\mathbb{I}_{[u-T,u]}(v)\nu _{0}(dv)du
		\\
		& \leq M\int_{-K}^{T}\left\Vert Z(t)\right\Vert _{E}dt<\infty .
	\end{align*}%
	Thus, for almost all $t\in \lbrack 0,T]$, $\int_{-K}^{0}\left\Vert
	Z(s+t)k(t,s)\right\Vert _{F}\nu _{0}(ds)<\infty $ and it follows from
	Theorem 1 in p.133 in \cite{80y} that $\varrho
	_{t}(Z)=\int_{-K}^{0}Z(s+t)k(t,s)\nu _{0}(ds)$ is a well-defined Bochner
	integral.
	
	Next, for $Z\in L^{2}(-K,T;E)$,  by the H\"older's inequality and applying Lemma \ref{inte-trans-lem}  again, we have for each $T^{\prime }\in (0,T]$ that
	\begin{align*}
		&\int_{0}^{T^{\prime }}\Vert \varrho _{t}(Z)\Vert _{F}^{2}dt\\&
		=\int_{0}^{T^{\prime }}\Big\Vert \int_{-K}^{0}Z(t+s)k(t,s)\nu
		_{0}(ds)\Big\Vert _{F}^{2}dt \\
		& \leq \int_{0}^{T^{\prime }}\Big(\int_{-K}^{0}\big\|Z(t+s)k(t,s)\big\|%
		_{E}\nu _{0}(ds)\Big)^{2}dt \\
		& \leq \int_{0}^{T^{\prime }}\int_{-K}^{0}\Vert Z(t+s)\Vert _{E}^{2}\Vert
		k(t,s)\Vert _{\mathcal{L}(E,F)}\nu _{0}(ds)\int_{-K}^{0}\Vert k(t,s)\Vert _{%
			\mathcal{L}(E,F)}\nu _{0}(ds)dt \\
		& \leq M_0\int_{0}^{T^{\prime }}\int_{-K}^{0}\Vert Z(t+s)\Vert _{E}^{2}\Vert
		k(t,s)\Vert _{\mathcal{L}(E,F)}\nu _{0}(ds)dt \\
		& = M_0\int_{-K}^{T^{\prime }}\int_{-K}^{0}\Vert Z(u)\Vert _{E}^{2}\Vert
		k(u-v,v)\Vert _{\mathcal{L}(E,F)} \mathbb I_{[u-T, u]}(v)\nu _{0}(dv)du \\
		& \leq M_0M\int_{-K}^{T'}\left\Vert Z(u)\right\Vert _{E}^{2}du,
	\end{align*}
	which completes the proof.
\end{proof}

As a direct application of Proposition \ref{Prop1},
we have the following result.

\begin{corollary}
	\label{cor1} Assume \ref{(C0)}, \ref{(C1)} and \ref{(C2)} for (the representing measure of) $\rho_t=\partial_x \hat a(t,x)$.
	Then, $\rho=\{\rho_t(\cdot), t\in[0,T]\}$   is a bounded linear operator mapping from $L^2(-K, T;E)$ to $L^2(0,T;F)$.
\end{corollary}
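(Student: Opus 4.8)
The plan is to recognize that, under Assumptions \ref{(C0)}, \ref{(C1)} and \ref{(C2)}, the operator $\rho_t$ is literally an instance of the abstract operator $\varrho_t$ analyzed in Proposition \ref{Prop1}; the boundedness of $\rho$ from $L^2(-K,T;E)$ to $L^2(0,T;F)$ is then immediate. Thus the whole argument reduces to verifying that the hypotheses of Proposition \ref{Prop1}---a finite measure $\nu_0$, a weakly measurable operator-valued density $k$, and Assumptions \ref{(C0')} and \ref{(C2)}---all hold for the representing measure of $\rho_t=\partial_x\hat a(t,x)$.

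First I would use Assumption \ref{(C1)}, which furnishes the finite measure $\nu_0$ dominating $|\nu|_v(t,\cdot)$ for every $t\in[0,T]$, to apply the Radon--Nikodym theorem for operator-valued measures and obtain the density $k(t,s)$ in \eqref{e:rd}, so that $\nu(t,ds)=k(t,s)\nu_0(ds)$ and, by \eqref{eq:d-rho}, $\rho_t(Z)=\int_{-K}^0 Z(t+s)k(t,s)\nu_0(ds)$. Lemma \ref{lem:k-wm} guarantees that $(t,s)\mapsto k(t,s)$ is weakly measurable, which is precisely the regularity of $k$ required by Proposition \ref{Prop1}. At this stage $\rho_t$ and $\varrho_t$ have exactly the same integral representation, with the same $k$ and the same $\nu_0$.

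It then remains to match the two remaining hypotheses. Assumption \ref{(C2)} is assumed directly in the corollary, so \eqref{e:M} holds verbatim. For the boundedness condition \ref{(C0')}, I would invoke the equivalence between \eqref{e:N0} and \eqref{e:M0} recorded just before Assumption \ref{(C0')}: since $\nu(t,ds)=k(t,s)\nu_0(ds)$, the variation satisfies $|\nu|_v(t,[-K,0])=\int_{-K}^0\|k(t,s)\|_{\mathcal L(E,F)}\nu_0(ds)$, so Assumption \ref{(C0)} is nothing but Assumption \ref{(C0')}. With $\nu_0$ finite, $k$ weakly measurable, and \ref{(C0')} and \ref{(C2)} in force, Proposition \ref{Prop1} applies and delivers the claimed estimate together with the boundedness of $\rho=\varrho$ from $L^2(-K,T;E)$ to $L^2(0,T;F)$. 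Since every step is a direct invocation of results already established, there is no genuine analytic obstacle here; the only point demanding care is the bookkeeping that the representing measure of $\rho_t$ meets each hypothesis exactly---in particular that the density $k$ produced by Radon--Nikodym under \ref{(C1)} is the same weakly measurable function appearing in Proposition \ref{Prop1}, and that the uniform variation bound of \ref{(C0)} translates into the integral bound \eqref{e:M0} of \ref{(C0')}.
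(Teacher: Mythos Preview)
Your proposal is correct and matches the paper's approach exactly: the paper simply states the corollary ``as a direct application of Proposition~\ref{Prop1}'' without a separate proof, and your write-up spells out precisely the verification that the data $(\nu_0,k)$ produced from \ref{(C1)} via \eqref{e:rd}, together with Lemma~\ref{lem:k-wm} and the equivalence of \ref{(C0)} and \ref{(C0')}, places $\rho_t$ in the setting of Proposition~\ref{Prop1}.
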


Let $\varrho $ be a bounded linear operator mapping from $L^{2}(-K,T;E)$ to $%
L^{2}(0,T;F)$. Its \emph{adjoint operator} $\varrho ^{\ast }=\big\{\varrho
_{t}^{\ast }(\cdot ),t\in \lbrack -K,T]\big\}:L^{2}(0,T;F)\rightarrow
L^{2}(-K,T;E)$ satisfies, for all $Z\in L^{2}(-K,T;E)$ and $Q\in L^{2}(0,T;F)$%
, 
\begin{equation}
	\int_{0}^{T}\left\langle \varrho _{t}(Z),Q(t)\right\rangle
	_{F}dt=\int_{-K}^{T}\left\langle Z(t),\varrho _{t}^{\ast }(Q)\right\rangle
	_{E}dt.  \label{e:dualilty}
\end{equation}
We have an explicit expression for the adjoint operator $\varrho ^{\ast}$ presented below.

\begin{proposition}
	\label{thm:rho*} Let $\varrho :L^{2}(-K,T;E)\rightarrow L^{2}(0,T;F)$ be a
	bounded linear operator defined as in Proposition \ref{Prop1} and $\varrho
	^{\ast }$ be its adjoint operator.  Then, for $Q\in L^{2}(0,T;F)$, 
	\begin{equation}
		\varrho _{t}^{\ast }(Q)=\int_{-K}^{0}k^{\ast }(t-s,s)Q(t-s)\mathbb I_{[t-T,t]}(s)\nu
		_{0}(ds),~t\in \lbrack -K,T],  \label{e:rho*}
	\end{equation}%
	where $k^{\ast }(t,s)$ is the adjoint operator of $k(t,s)$.  Moreover,  for each $K'\in[-K,T]$, we have  
	\begin{equation}\label{Eq4-13} \int_{K'}^T \|\varrho_t^*(Q)\|_E^2dt 
		\le MM_0 \int_{0\vee K'}^T \|Q(u)\|_F^2du, \text{ for all } Q\in L^{2}(0,T;F).\end{equation}
\end{proposition}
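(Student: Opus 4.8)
The plan is to obtain the formula \eqref{e:rho*} by transforming the left-hand side of the defining duality relation \eqref{e:dualilty} until it takes the shape $\int_{-K}^{T}\langle Z(t),\,\cdot\,\rangle_E\,dt$, then reading off $\varrho_t^{*}$; the estimate \eqref{Eq4-13} will follow from a computation parallel to that in Proposition~\ref{Prop1}. First I would fix $Z\in L^2(-K,T;E)$ and $Q\in L^2(0,T;F)$ and expand, using the definition of $\varrho_t$ and the fact that the bounded functional $\langle\,\cdot\,,Q(t)\rangle_F$ commutes with the Bochner integral,
\[
\int_0^T \langle \varrho_t(Z), Q(t)\rangle_F\,dt
=\int_0^T\int_{-K}^0 \langle k(t,s)Z(t+s), Q(t)\rangle_F\,\nu_0(ds)\,dt .
\]
The passage to this double integral over $(t,s)$ is justified by Fubini's theorem, whose hypothesis $\int_0^T\int_{-K}^0 \|k(t,s)\|_{\mathcal L(E,F)}\|Z(t+s)\|_E\|Q(t)\|_F\,\nu_0(ds)\,dt<\infty$ I would verify by two applications of Cauchy--Schwarz (splitting $\|k\|=\|k\|^{1/2}\!\cdot\!\|k\|^{1/2}$) together with \ref{(C0')}, \ref{(C2)} and the change of variables in Lemma~\ref{inte-trans-lem}; this is exactly the bound already established in the proof of Proposition~\ref{Prop1}.

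Next I would move the inner product through the adjoint of $k(t,s)$ via $\langle k(t,s)Z(t+s),Q(t)\rangle_F=\langle Z(t+s),k^{*}(t,s)Q(t)\rangle_E$, noting that weak measurability of $(t,s)\mapsto k^{*}(t,s)$ follows from that of $k$ (Lemma~\ref{lem:k-wm}) since $\langle k^{*}(t,s)f,e\rangle_E=\langle k(t,s)e,f\rangle_F$. I then apply Lemma~\ref{inte-trans-lem} with the substitution $u=t+s$, $v=s$ and $K'=-K$ (the resulting indicator $\mathbb{I}_{[-K-t,0]}(s)$ being identically $1$ for $s\in[-K,0]$, $t\ge 0$), which gives
\[
\int_0^T\int_{-K}^0 \langle Z(t+s), k^{*}(t,s)Q(t)\rangle_E\,\nu_0(ds)\,dt
=\int_{-K}^T \Big\langle Z(u),\, \int_{-K}^0 k^{*}(u-v,v)Q(u-v)\mathbb{I}_{[u-T,u]}(v)\,\nu_0(dv)\Big\rangle_E\,du .
\]
Comparing with the right-hand side of \eqref{e:dualilty} and using the arbitrariness of $Z$ (equivalently, uniqueness of the adjoint), I read off formula \eqref{e:rho*} after relabeling $u\mapsto t$, $v\mapsto s$; the indicator $\mathbb{I}_{[t-T,t]}(s)$ automatically forces $t-s\in[0,T]$, so that $Q(t-s)$ is meaningful.

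For the estimate \eqref{Eq4-13} I would bound $\|\varrho_t^{*}(Q)\|_E$ by $\int_{-K}^0 \|k(t-s,s)\|_{\mathcal L(E,F)}\|Q(t-s)\|_F\,\mathbb{I}_{[t-T,t]}(s)\,\nu_0(ds)$ (using $\|k^{*}\|=\|k\|$), then split off one factor $\|k\|^{1/2}$ and apply Cauchy--Schwarz, invoking \ref{(C2)} for the first factor, to obtain
\[
\|\varrho_t^{*}(Q)\|_E^2 \le M\int_{-K}^0 \|k(t-s,s)\|_{\mathcal L(E,F)}\,\|Q(t-s)\|_F^2\,\mathbb{I}_{[t-T,t]}(s)\,\nu_0(ds).
\]
Integrating over $t\in[K',T]$ and running Lemma~\ref{inte-trans-lem} in the reverse direction converts the double integral into $M\int_0^T\|Q(t)\|_F^2\big(\int_{-K}^0\|k(t,s)\|_{\mathcal L(E,F)}\mathbb{I}_{[K'-t,0]}(s)\,\nu_0(ds)\big)dt$; bounding the inner $\nu_0$-integral by $M_0$ through \ref{(C0')}, and observing that the indicator $\mathbb{I}_{[K'-t,0]}$ vanishes unless $t\ge K'$ (so the effective range of $t$ is $[0\vee K',T]$), yields $MM_0\int_{0\vee K'}^T\|Q(u)\|_F^2\,du$. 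Taking $K'=-K$ here simultaneously confirms, as in Proposition~\ref{Prop1}, that $\varrho_t^{*}(Q)$ is a well-defined Bochner integral for almost all $t$ and that $\varrho^{*}$ maps into $L^2(-K,T;E)$.

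I expect the main obstacles to be the rigorous justification of Fubini's theorem and the interchange of the inner product with the Bochner integral in the infinite-dimensional setting, together with the careful bookkeeping of the indicator-function domains across the two changes of variables---in particular tracking how $\mathbb{I}_{[K'-t,0]}$ forces the lower limit $0\vee K'$ in the final estimate.
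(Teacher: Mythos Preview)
Your proposal is correct and follows essentially the same approach as the paper: expand the duality pairing, apply Lemma~\ref{inte-trans-lem} to shift variables, read off the adjoint, and for the estimate use Cauchy--Schwarz with the $\|k\|^{1/2}\cdot\|k\|^{1/2}$ splitting followed by a second application of Lemma~\ref{inte-trans-lem} and the bounds \ref{(C0')}, \ref{(C2)}. Your write-up is in fact slightly more careful than the paper's in spelling out the Fubini justification and the weak measurability of $k^{*}$, but the argument is the same.
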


\begin{proof}
	We have, for $Q\in L^{2}(0,T;F)$, 
	\begin{equation*}
		\int_{0}^{T}\left\langle \varrho _{t}(Z),Q(t)\right\rangle
		_{F}dt=\int_{0}^{T}\int_{-K}^{0}\left\langle Z(t+s)k(t,s),Q(t)\right\rangle
		_{F}\nu _{0}(ds)dt.
	\end{equation*}%
	By  Lemma \ref{inte-trans-lem}, we get 
	\begin{align*}
		&\int_{0}^{T}\int_{-K}^{0}\left\langle Z(t+s)k(t,s),Q(t)\right\rangle _{F}\nu
		_{0}(ds)dt \\
		&=\int_{-K}^{T}\int_{-K}^{0}\left\langle Z(u)k(u-v,
		v),Q(u-v)\right\rangle _{F}\mathbb I_{[u-T,u]}(v)\nu _{0}(dv)d
		u \\
		&=\int_{-K}^{T}\Big\langle Z(u),\int_{-K}^{0}Q(u-v)k ^{\ast }(u-v,v)\mathbb I_{[u-T,u]}(v)\nu _{0}(dv)\Big\rangle _{F}d%
		u.
	\end{align*}
	Thus, \eqref{e:rho*} follows directly from the definition of adjoint
	operator.
	
	For $K^{\prime }\in \lbrack -K,T]$, noting that the norm of $k^{\ast }(t,s)$
	coincides with that of $k(t,s)$, we get 
	\begin{align*}
		& \int_{K^{\prime }}^{T}\Vert \varrho _{t}^{\ast }(Q)\Vert
		_{E}^{2}dt=\int_{K^{\prime }}^{T}\Big\Vert \int_{-K}^{0}k^{\ast
		}(t-s,s)Q(t-s)\mathbb{I}_{[t-T,t]}(s)\nu _{0}(ds)\Big\Vert _{E}^{2}dt \\
		& \leq \int_{K^{\prime }}^{T}\Big(\int_{-K}^{0}\big\|k^{\ast }(t-s,s)Q(t-s)%
		\mathbb{I}_{[t-T,t]}(s)\big\|_{E}\nu _{0}(ds)\Big)^{2}dt \\
		& \leq \int_{K^{\prime }}^{T}\Big(\int_{-K}^{0}\big\|k^{\ast }(t-s,s)\Vert _{%
			\mathcal{L}(F,E)}\Vert Q(t-s)\Vert _{F}\mathbb{I}_{[t-T,t]}(s)\nu _{0}(ds)%
		\Big)^{2}dt \\
		& \leq \int_{K^{\prime }}^{T}\int_{-K}^{0}\Vert Q(t-s)\Vert _{F}^{2}\Vert
		k^{\ast }(t-s,s)\Vert _{\mathcal{L}(F,E)}\mathbb{I}_{[t-T,t]}(s)\nu
		_{0}(ds)\\&\hspace{2em}\times\int_{-K}^{0}\Vert k^{\ast }(t-s,s)\Vert _{\mathcal{L}(F,E)}\nu
		_{0}(ds)dt \\
		& \leq M\int_{K^{\prime }}^{T}\int_{-K}^{0}\Vert Q(t-s)\Vert
		_{F}^{2}\Vert k^{\ast }(t-s,s)\Vert _{\mathcal{L}(F,E)}\mathbb{I}%
		_{[t-T,t]}(s)\nu _{0}(ds)dt.
	\end{align*}%
	Then according to Lemma \ref{inte-trans-lem},
	\begin{align*}
		& \int_{K^{\prime }}^{T}\int_{-K}^{0}\Vert Q(t-s)\Vert _{F}^{2}\Vert k^{\ast
		}(t-s,s)\Vert _{\mathcal{L}(F,E)}\mathbb{I}_{[t-T,t]}(s)\nu _{0}(ds)dt \\
		& =\int_{ 0\vee K^{\prime }}^{T}\int_{-K}^{0}\Vert Q(u)\Vert _{F}^{2}\Vert k^{\ast }(u,v
		)\Vert _{\mathcal{L}(F,E)} \mathbb{I}_{[K'-u,0]}(v)\nu _{0}(dv)d%
		u \\
		& \leq \int_{ 0\vee K^{\prime }}^{T}\Vert Q(u)\Vert
		_{F}^{2}\int_{-K}^{0}\Vert k(u,v)\Vert _{%
			\mathcal{L}(E,F)}\nu _{0}(dv)du \\
		& \leq M_0\int_{  0\vee K^{\prime }}^{T}\Vert Q(u)\Vert _{F}^{2}du.
	\end{align*}%
	Combining the above two inequalities, we obtain (\ref{Eq4-13}).
\end{proof}

\begin{remark}
	\label{rem:rho*} Observing that  $\varrho$ defined as in Proposition \ref{Prop1}  is non-anticipative: 
	\begin{equation}  \label{e:rho-rem}
		\varrho_t(Z)=\varrho_t(Z_{t-K,t}),\ \text{for}\ Z\in L^2(-K,T;E).
	\end{equation} As a direct consequence of \eqref{e:rho*}, the adjoint 
	operator $\varrho^*$ is  \emph{anticipative} or
	\emph{non-adapted} in the sense that 
	\begin{equation}  \label{e:rho*-rem}
		\varrho^*_t (Q)=\varrho^*_t (Q_{t, (t+K)\wedge T}), \ \text{for}\ Q\in L^{2}(0,T;F).
	\end{equation}
	This will yield  an \emph{anticipated} BSEE (see \eqref{e-adjoint}) in the derivation of the maximum principle in 
	Section \ref{sec:SMP}.
\end{remark}

\begin{remark}
	\label{Rem:dual-for-u} 
	
	The results in  Propositions \ref{Prop1} and \ref{thm:rho*} also apply to situations beyond  path derivatives. For example, let $b:[0,T]\times E\rightarrow F$ be a measurable  function that is
	Fr\'echet differentiable in $E$ with uniformly bounded  derivatives. Let $\mu$ be a finite measure on $[-K,0]$. Denote 
	\begin{equation}\label{e:x-mu}
		x_{\mu}(t):=\int_{-K}^{0}x(t+s)\mu(ds),~ 
		t\in[0,T],
	\end{equation}
	provided that the integral exists. For  $x\in L_\mu^2(-K,T;E)$, we have
	$$ b(t,x_{\mu}(t)+Z_{\mu}(t))-b(t,x_{\mu}(t)) =\partial_{x}b(t,x_{\mu}(t))Z_{\mu}(t)+o( Z_{\mu}(t)), ~Z\in  L_\mu^2(-K, T;E).$$ Denote
	\begin{equation*}
		\varrho_{t}(Z):=\partial
		_{x}b(t,x_{\mu}(t))Z_{\mu}(t)=\int_{-K}^{0}\partial
		_{x}b(t,x_{\mu}(t))Z(t+s)\mu(ds), ~Z\in L^2(-K, T;E),
	\end{equation*}
	which is well defined by Proposition \ref{Prop1}. Then from Proposition \ref{thm:rho*},  the adjoint operator $\varrho^*$ is characterized by 
	\begin{align*}
		\varrho_t^*(Q)=\int_{-K}^0 \big(\partial_{x}b(t-s,x_{\mu}(t-s))\big)^*Q(t-s)
		\mathbb I_{[t-T,t]}(s) \mu(ds), ~Q\in L^2(0,T; F).  
	\end{align*}
	This will be used in the dual analysis of control delay in Section \ref{sec:SMP}.
\end{remark}

\begin{example}
	\label{example1} 
	Let $\tilde{a}:[0,T]\times E\rightarrow F$ be a measurable function that is 
	Fr\'echet differentiable in $E$  with uniformly bounded derivatives.  Set $a(t,x)=\tilde{a}
	(t,x_{\mu}(t)) $ for  $x\in C(-K,T;E)$,  where $x_{\mu}$ is given by \eqref{e:x-mu}. Since
	\begin{eqnarray*}
		a(t,x+Z)-a(t,x) &=&\tilde{a}(t,x_{\mu}(t)+Z_{\mu}(t))-\tilde{a}(t,x_{\mu}(t))
		\\
		&=&\partial_{x}\tilde{a}(t,x_{\mu}(t))Z_{\mu}(t)+o( Z_{\mu}(t))
		\\
		&=&\partial_{x}\tilde{a}(t,x_{\mu}(t))Z_{\mu}(t)+o(\|Z\|_{C(-K, T; E)}), ~Z\in  C(-K, T;E),
	\end{eqnarray*}
	the Fr\'echet  derivative of $a$ at $x$ is $\partial _{x}a(t,x)(Z)=\partial
	_{x}\tilde{a}(t,x_{\mu}(t))Z_{\mu}(t)$ and the path derivative operator
	\begin{equation}  \label{e:varrho}
		\begin{aligned} \varrho_{x,t}(Z)&=\partial _{x}a(t,x_{t-K,t})(Z_{t-K,t})\\ &=\partial
			_{x}\tilde{a}(t,x_{\mu}(t))Z_{\mu}(t)\\ &= \int_{-K}^{0}Z(t+s)\partial
			_{x}\tilde{a}(t,x_{\mu}(t))\mu(ds), ~Z\in C(-K, T;E). \end{aligned}
	\end{equation}
	
	In this case, we have $\nu (x, t,ds)=\partial_{x}\tilde{a} (t,x_{\mu}(t))\mu(ds)$. Thus, Assumption \ref{(C1)} is satisfied with $\nu_{0}(ds)=\mu(ds)$, and the corresponding Radon-Nikodym derivative $
	k(t,s) =\partial_x \tilde a(t, x_\mu(t))$. Moreover, Assumptions \ref{(C0')} and \ref{(C2)} are fulfilled, and hence  the domain $C(-K,T;E)$ of $\varrho_{x,t}$ in \eqref{e:varrho} can 
	be extended to $L^2(-K,T;E)$ by Corollary~\ref{cor1}. \end{example}

\begin{remark}
	\label{rem:derivatives} In the discussion of path derivatives with respect to the variable 
	$x$, we have assumed that $a(t,x):[0,T]\times {C(-K,T;E)} \to F$ is Fr\'echet differentiable. However, the results in Sections \ref{sec:path-derivative}  and \ref{sec:SMP} remain valid with straightforward modifications for a notion called \emph{non-anticipatively  differentiable}  in the sense that, for any fixed $x\in C(-K,T;E)$,  there exists, for each $t\in[0,T]$, a bounded linear operator 
	\begin{equation*}
		A_{x,t}: C_t(-K, T;E)\to F,
	\end{equation*}
	such that
	\begin{equation*}
		a(t,x+h)= a(t,x) + A_{x,t}(h) + o(\|h\|_{C(-K, T; E)}), ~h\in C_t(-K, T;E).
	\end{equation*}
	If such an operator $A_{x,t}$ exists, then it is unique (in $C_t(-K,T;E)$), and we
	denote it by $D_x a(t,x)$. 
	We define the path derivative operator $\rho_{x,t}$ by
	\begin{equation*}  
		\rho_{x,t}(Z):=D_x a(t, x_{t-K,t})(Z_{t-K,t}), ~ Z\in C(-K, T;E),
	\end{equation*}
	which is automatically  non-anticipative by construction.

	The non-anticipative differentiability is slightly weaker than Fr\'echet differentiability, as  the former requires fewer test elements $Z$.  Thus, if the Fr\'echet derivative $\partial_x a(t,x)$ of $%
	a(t,\cdot)$ at $x$ exists for all $t\in[0,T]$, then it is non-anticipatively differentiable with
	\begin{equation*}
		D_{x}a(t,x)(Z)=\partial_x a(t, x)(Z), ~ Z\in C_t(-K, T;E).
	\end{equation*}

	Clearly, if the function $a$ satisfies the non-anticipative condition, i.e.,  $a(t,x)=a(t,x_{t-K,t})$  for all $x\in {C(-K,T;E)}$, then $\partial_x a(t, x)$ and $D_{x}a(t,x) $ coincide.
\end{remark}
\section{Stochastic maximum principle}\label{sec:SMP}
In this section, we study the recursive optimal control problem for a class of
infinite-dimensional path-dependent systems and derive the Pontryagin's stochastic
maximum principle.

\subsection{Formulation of the control problem}
Suppose that the control domain $U$ is a convex subset of a real separable
Hilbert space $H_{1}$ which is identified  with its dual space.  Consider
the following controlled PSEE
\begin{equation}
	\left\{ \begin{aligned} dx(t)=& \,\,\big[A(t)x(t)+b(t,x_{t-K,t},u_{\mu_1}(t))\big]dt \\ &
		+\big[B(t)x(t)+\sigma (t,x_{t-K,t},u_{\mu_1}(t))\big]dw(t), \quad t\in[0,T], \\ x(t)=&
		\,\,\gamma(t),\,\,u(t)=v(t),\quad t\in \lbrack -K ,0], \end{aligned}\right.
	\label{state}
\end{equation}
where  $\gamma(\cdot )\in  C(-K,0;H)$ and $
v_0(\cdot )\in L^{2}(-K,0;U)$ are given initial paths, 
\begin{equation*}
	(A,B):[0,T]\times \Omega \rightarrow \mathcal L(V;V^{\ast }\times \mathcal{L}_{2}^{0})
\end{equation*}%
are random  unbounded linear
operators,
\begin{equation*}
	(b,\sigma ): \lbrack 0,T]\times\Omega \times  C(-K,T;H)\times
	H_{1}\rightarrow H\times \mathcal{L}_{2}^{0}
\end{equation*}
are  random nonlinear functions,
\begin{equation}\label{e:x-t-k-t}
	x_{t-K,t}(s)=x(t-K)\mathbb I_{[-K,t-K)}(s)+x(s)\mathbb I_{[t-K,t]}(s)+x(t)\mathbb I_{(t,T]}(s),
	\text{ }s\in \lbrack -K,T],
\end{equation}
and for a finite measure $\mu_1$ on $[-K,0]$,
\begin{equation}\label{e:um}
	u_{\mu_1}(t):=\int_{-K}^{0}u(t+s)\mu_1(ds).
\end{equation}

The cost functional is defined by 
\begin{equation*}
	J(u(\cdot )):=y(0),
\end{equation*}%
where $(y(\cdot),z(\cdot))$ solves the following BSDE
\begin{equation}
	\left\{ \begin{aligned} -dy(t)=&\,\, f(t,x_{t-K,t},y(t),z(t),u_{\mu_1}(t)) dt
		-z(t)dw(t),\quad t\in \lbrack 0,T], \\ y(T)
		=&\,\,h\big(x_{\mu_2}(T)\big).
	\end{aligned}\right.  \label{y}
\end{equation}
In \eqref{y}, 
\begin{equation*}
	h:\Omega \times H\rightarrow \mathbb{R}\text{ and }f:[0,T]\times \Omega
	\times C(-K,T;H)\times \mathbb{R}\times \mathcal{L}_{2}^{0}(\mathcal K,\R)\times
	H_{1}\rightarrow \mathbb{R}
\end{equation*}
are the coefficient
functions, and   
\begin{equation}
	x_{\mu_2}(T):=\int_{-K}^{0}x(T+s)\mu_2(ds),  \label{e:x=dv}
\end{equation}%
with $\mu_2$ being a finite measure on $[-K,0]$. 
The admissible control set $\mathcal{U}$ is defined by
\begin{equation*}
	\mathcal{U}:=\Big\{u:[-K,T]\times \Omega \rightarrow U\ \text{satisfying }%
	u|_{[0,T]}\in L_{\mathbb{F}}^{2}(0,T;U)\ \text{and}\ u(t)=v_0(t),\,t\in \lbrack
	-K,0]\Big\}.
\end{equation*}

We aim to find necessary conditions (i.e., the maximum principle) 
for an optimal control $\bar{u}$, i.e., an admissible control $\bar{u}%
(\cdot )$ that minimizes the cost functional $J(u(\cdot ))$ over $\mathcal{U}
$.

Assume the following conditions hold.
\begin{itemize} 
	\item[({H1})] $b(\cdot ,\cdot ,0,0)\in L_{\mathbb{F}}^{2}(0,T;H)$, $%
	\sigma (\cdot ,\cdot ,0,0)\in L_{\mathbb{F}}^{2}(0,T;\mathcal{L}_{2}^{0}).$
	
	\item[({H2})] The operators $A$ and $B$ satisfy  (A2)-(A3).
	
	\item[({H3})]  For each $(x,v)\in  	C(-K,T;H)\times H_{1},$ the
	functions $b(\cdot ,\cdot,x,v)$ and $\sigma (\cdot ,\cdot,x,v)$
	are progressively measurable.  $b$ and $\sigma $ are Fr\'echet differentiable with
	respect to $x$ and $v$ with continuous and uniformly bounded
	derivatives.
	
	\item[({H4})]
	For each $(x,y,z,v)\in 
	C(-K,T;H)\times \mathbb{R}\times \mathcal{L}_{2}^{0}(\mathcal K,\mathbb{R})\times
	H_{1},$ $f(\cdot ,\cdot ,x,y,z,v)$ is progressively measurable and for $x^1\in H$, $h(\cdot ,x^1)$ is $\mathcal{F}_{T}$-measurable. The functions
	$f$ and $h$ are Fr\'echet differentiable with respect to  $(x,y,z,v)$
	and $x^1$, respectively,  with continuous and uniformly bounded derivatives. 
	
	\item[({H5})]  The (representing measures (see (\ref{e:barrho})) of)  path derivatives $\partial
	_{x}\hat b(t,x,v)$, $\partial _{x}\hat \sigma (t,x,v)$ and $\partial
	_{x}\hat f(t,x,y,z,v)$ (see \eqref{e:extension}) satisfy Assumptions \ref{(C0)}, ~\ref{(C1)}  and \ref{(C2)}, with  common bounds  $M_0^{x,\omega ,v,y,z}$ and $M^{x,\omega ,v,y,z}$, uniformly for all $(x,\omega ,v,y,z)$, where  $M_0$ and $M_1$ appear in Assumptions  \ref{(C0)} and \ref{(C2)}, respectively.
	
\end{itemize}

\begin{remark}
	If the dimensions of $\mathcal K$ and  $H$ are finite, in view of Remark \ref{rem:M0}, the conditions for $\partial
	_{x}b$, $\partial _{x}\sigma$ and $\partial
	_{x}f$ in (H3) and (H4) already imply  Assumption \ref{(C0)}, which is assumed in (H5).
\end{remark}

Note that under  {(H1)-(H4)}, {equation}~\eqref{state} admits a unique solution by Theorem \ref{exu}, if we take \[\tilde{b}(t,\omega,x)=b(t,\omega,x_{\cdot\vee (t-K)},u_{\mu_1}(t,\omega)) \ \text{and}\ 
\tilde{\sigma}(t,\omega,x)=\sigma(t,\omega,x_{\cdot\vee (t-K)},u_{\mu_1}(t,\omega)),\] for $(t,\omega,x,u)\in\lbrack 0,T] \times \Omega \times  C(-K,T;H)\times
\mathcal{U},$ in {equaiton}~\eqref{sdee-0}. 

\subsection{Variational equations}
Let $\bar u(\cdot)\in \mathcal{U}$ be an optimal control, and $\bar
x(\cdot)$ and $(\bar y(\cdot), \bar z(\cdot))$ be the corresponding solutions to \eqref{state} and \eqref{y} respectively.  For $\rho \in \lbrack 0,1]$ and 
$u(\cdot )\in \mathcal{U}$, we define the perturbation 
of $\bar u(\cdot)$ by
\begin{equation*}
	u^{\rho }(\cdot )=\bar{u}(\cdot )+\rho (u(\cdot )-\bar{u}(\cdot )).
\end{equation*}
The convexity of $U$ yields that $u^{\rho }(\cdot )\in \mathcal{U}$. Let $x^{\rho}(\cdot )$ and $(y^{\rho }(\cdot ),z^{\rho }(\cdot ))$ be the corresponding solutions of \eqref{state} and \eqref{y} associated with $u^{\rho }(\cdot )$, respectively. 

For the functions $b(t,x,v), \sigma(t,x,v), f(t,x,y, z,v)$ and $h(x^1)$, where $$(x,y,z,v,x^1)\in C(-K,T;H)\times \mathbb{R}\times \mathcal{L}_{2}^{0}(\mathcal K,\mathbb{R})\times
H_{1}\times H,$$ we take the following notations, for $\varphi=b, \sigma$ and $\tau=x,y,z,v$,
\begin{equation}\label{e:abbreviations}
	\begin{aligned}
		\varphi (t)& :=\varphi (t,\bar x_{t-K,t},\bar u_{\mu_1}(t)), \\
		\partial _{\tau }\varphi (t)& :=\partial _{\tau }\varphi (t,\bar x_{t-K,t},\bar{u
		}_{\mu_1}(t)), \\
		f(t)& :=f(t,\bar x_{t-K,t},\bar{y}(t),\bar{z}(t),\bar{u}_{\mu_1}(t)), \\
		\partial _{\tau }f(t)& :=\partial _{\tau }f(t,\bar x_{t-K,t},\bar{y}(t),\bar{z}(t),\bar{u}_{\mu_1}(t)), \\
		h(T)& :=h(\bar{x}_{\mu_2}(T)), \\
		\partial _{x^1}h(T)& :=\partial _{x^1}h(\bar{x}_{\mu_2}(T)).
	\end{aligned}
\end{equation}
We stress that, all the above abbreviated  functions and partial derivatives  are evaluated at the ``optimal quadruple'' $(\bar x(\cdot),\bar{y}(\cdot),\bar{z}(\cdot), \bar u(\cdot))$.

Consider 
\begin{equation}  \label{e-hat-x}
	\left\{ \begin{aligned} d\hat{x}(t)=&\,\,
		\Big[A(t)\hat{x}(t)+\partial_xb(t)(\hat{x}_{t-K,t})+\partial_{v
		} b(t)\big(u_{\mu_1}(t)-\bar{u}_{\mu_1}(t)\big)\Big]dt \\ &
		+\Big[B(t)\hat{x}(t)+\partial_x\sigma(t)(\hat{x}_{t-K,t})+\partial_v \sigma(t)\big(u_{\mu_1}(t)-\bar{u}_{\mu_1}(t)\big)\Big]dw(t),\\&\hspace{2em}t\in \lbrack 0,T], \\ \hat{x}(t)=&\,\, 0,\quad t\in
		\lbrack -K ,0], \end{aligned}\right.
\end{equation}
and 
\begin{equation}  \label{e-hat-yz'}
	\left\{ \begin{aligned} -d\hat{y}(t)=
		&\,\,\Big[\partial _x f(t)(\hat x_{t-K,t})
		+\partial_{y}f(t)\hat{y}(t)+\left\langle \partial_{z}f(t),\hat{z}(t)\right\rangle_{\mathcal
			L_2^0( \mathcal K;\R)}\\& +\left\langle \partial_{v}f(t),u_{\mu_1}(t)-\bar{u}_{\mu_1}(t)\right\rangle _{H_{1}} \Big]dt 
		-\hat{z}(t)dw(t),\quad t\in \lbrack 0,T],\\ \hat{y}(T)= & \,\, \langle \partial_{x^1}h(T),\hat
		{x}_{\mu_2}(T)\rangle_H,\end{aligned} \right.
\end{equation}
which are the  variational equations  along the optimal  quadruple $(\bar{x}(\cdot ),\bar{y}(\cdot ),\bar{z}(\cdot ),\bar{u}(\cdot ))$ for \eqref{state} and 
\eqref{y}, respectively.

We take the following path derivative operators  evaluated at $(\bar x(\cdot ), \bar y(\cdot ), \bar z(\cdot ), \bar u(\cdot ))$ (see \eqref{e:extension}): for $t\in [0,T]$ and $Z\in C(-K,T;H)$, denote
\[ \rho_{b,t}(Z) :=\partial _{x}b(t)(Z_{t-K,t}),\  \rho _{\sigma,t}(Z):=\partial _{x}\sigma (t)(Z_{t-K,t}),\  \rho_{f,t}(Z) :=\partial _{x}f(t)(Z_{t-K,t}),\]
and $\rho
_{b}:=\big\{\rho_{b,t}(\cdot),~ t\in[0,T]\big\}$, $\rho
_{\sigma}:=\big\{\rho_{\sigma,t}(\cdot), ~t\in[0,T]\big\}$, $\rho
_{f}:=\big\{\rho_{f,t}(\cdot),~ t\in[0,T]\big\}$.
Under condition (H5), by Proposition \ref{Prop1} and Corollary \ref{cor1}, $\rho
_{b}$, $\rho _{\sigma}$ and $\rho _{f}$ are bounded linear
operators mapping from $L^{2}(-K,T;H)$ to $L^{2}(0,T;H)$, $L^{2}(0,T;\mathcal{L}_{2}^{0})$ and $L^{2}(0,T;\mathbb{R}),$   respectively, with a bound uniformly in $\omega$. 

Now, the variational equations \eqref{e-hat-x} and \eqref{e-hat-yz}  can be rewritten as
\begin{equation}  \label{e-variational}
	\left\{ \begin{aligned} d\hat{x}(t)=&\,\,
		\Big[A(t)\hat{x}(t)+\rho _{b,t}(\hat x_{t-K,t})+\partial_{v} b(t)\big(u_{\mu_1}(t)-\bar{u}_{\mu_1}(t)\big)\Big]dt \\ &
		+\Big[B(t)\hat{x}(t)+\rho _{\sigma,t }(\hat x_{t-K,t})+\partial_v \sigma(t)\big(u_{\mu_1}(t)-\bar{u}_{\mu_1}(t)\big)\Big]dw(t),\\&\hspace{2em}t\in \lbrack 0,T], \\ \hat{x}(t)=&\,\, 0,\quad t\in
		\lbrack -K ,0],\end{aligned}\right.
\end{equation}
and 
\begin{equation}  \label{e-hat-yz}
	\left\{ \begin{aligned} -d\hat{y}(t)=
		&\,\,\Big[\rho _{f,t}(\hat x_{t-K,t})
		+\partial_{y}f(t)\hat{y}(t)+\left\langle \partial_{z}f(t),\hat{z}(t)\right\rangle_{\mathcal
			L_2^0( \mathcal K;\R)}\\& +\left\langle \partial_{v}f(t),u_{\mu_1}(t)-\bar{u}_{\mu_1}(t)\right\rangle _{H_{1}} \Big]dt 
		-\hat{z}(t)dw(t),\quad t\in \lbrack 0,T],\\ \hat{y}(T)= & \,\, \langle \partial_{x^1}h(T),\hat
		{x}_{\mu_2}(T)\rangle_H,\end{aligned} \right.
\end{equation}

Assuming (H1)-(H4), {equation} \eqref{e-hat-yz} has a unique solution by the classical theory of BSDEs; for  the well-posedness of  \eqref{e-variational}, by Theorem \ref{exu} it suffices to verify (A4), which follows directly from the uniform boundedness of the linear operators $\partial_xb(t)$ and $\partial_x \sigma(t)$ assumed in (H3). Moreover, we can also show under (H1)-(H5) that the coefficient functions of \eqref{e-variational} satisfy (A4$\hspace{0.3mm}'$) in Remark~\ref{rem:A4'}   which also implies the well-posedness of \eqref{e-variational}:  for $(t,\omega )\in \lbrack 0,T]\times \Omega $ and $x,x'\in C(-K,T;H)$,
\begin{align*}
	&\int_{0}^{t}\Big\{\big\Vert \partial_xb(s)(x_{s-K,s})-\partial_xb(s)(x'_{s-K,s})\big\Vert
	_{H}^{2}+\big\Vert \partial_x\sigma(s)(x_{s-K,s})-\partial_x\sigma(s)(x'_{s-K,s})\big\Vert
	_{H}^{2}\Big\}ds\\
	& =\int_{0}^{t}\Big\{\Vert \rho_{b,s}(x-x')\Vert
	_{H}^{2}+\Vert \rho_{\sigma,s}(x-x')\Vert
	_{H}^{2}\Big\}ds\\
	&\leq C\int_{-K}^t\|x(s)-x'(s)\|_H^2ds,
\end{align*}
where the  inequality follows from  Proposition \ref{Prop1}.  

As $\rho$ goes to zero, $u^\rho(\cdot)$ converges to $\bar u(\cdot)$, and formal calculations
suggest that $x^\rho(\cdot)$ (resp. $(y^\rho(\cdot), z^\rho(\cdot))$) converges to $\bar x(\cdot)$
(resp. $(\bar y(\cdot), \bar z(\cdot))$) and $(x^\rho(\cdot)-\bar x(\cdot))/\rho$ (resp. $((y^\rho(\cdot)-\bar
y(\cdot))/\rho(\cdot), (z^\rho(\cdot)-\bar z(\cdot))/\rho)$) converges to the solution $\hat x(\cdot)$ of %
\eqref{e-hat-x} (resp. to the solution $(\hat y(\cdot), \hat z(\cdot))$ of \eqref{e-hat-yz}%
). This is justified by Lemma~\ref{Myle4-1} and Lemma~\ref{Le4-3} below.

\begin{lemma}
	\label{Myle4-1} Let {(H1)}-{(H4)} be satisfied. Then we have, as $\rho\to 0$, 
	\begin{align*}
		& \mathbb{E}\Big[ \sup\limits_{t\in[0,T]}\Vert x^{\rho }(t)-\bar{x}%
		(t)\Vert _{H}^{2}\Big] +\mathbb{E}\int_{0}^{T}\Vert x^{\rho }(t)-\bar{x}%
		(t)\Vert _{V}^{2}dt=O(\rho ^{2}); \\
		& \mathbb{E}\Big[ \sup\limits_{t\in[0,T]}\Vert x^{\rho }(t)-\bar{x}%
		(t)-\rho \hat{x}(t)\Vert _{H}^{2}\Big] +\mathbb{E}\int_0^T\Vert x^{\rho }(t)-%
		\bar{x}(t)-\rho \hat{x}(t)\Vert _{V}^{2}dt=o(\rho ^{2}).
	\end{align*}
\end{lemma}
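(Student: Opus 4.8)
The plan is to read both assertions as a priori estimates for path-dependent SEEs and to invoke Theorem~\ref{estimate}, reducing the whole problem to controlling a few source terms.

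For the first estimate, note that $x^{\rho}$ and $\bar x$ solve \eqref{state} with the same operators $A,B$ and the same initial path $\gamma$, differing only through the control argument $u^{\rho}_{\mu_1}$ versus $\bar u_{\mu_1}$. Hence $x^{\rho}-\bar x$ solves a PSEE to which the difference estimate \eqref{es-difference} applies, and the only nonvanishing data on the right-hand side are $\mathbb E\int_0^T\|b(t,\bar x_{t-K,t},u^{\rho}_{\mu_1}(t))-b(t,\bar x_{t-K,t},\bar u_{\mu_1}(t))\|_H^2\,dt$ together with its $\sigma$-analogue. Since $\partial_v b,\partial_v\sigma$ are uniformly bounded by (H3) and $u^{\rho}_{\mu_1}-\bar u_{\mu_1}=\rho(u-\bar u)_{\mu_1}$, these are bounded by $C\rho^2\,\mathbb E\int_0^T\|(u-\bar u)_{\mu_1}(t)\|_{H_1}^2\,dt\le C\rho^2\,\mathbb E\int_{-K}^T\|u(t)-\bar u(t)\|_{H_1}^2\,dt$, where the last step uses the finiteness of $\mu_1$ (as in Proposition~\ref{Prop1}). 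This is the claimed $O(\rho^2)$.

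For the second estimate I would set $\xi^{\rho}:=x^{\rho}-\bar x-\rho\hat x$, which vanishes on $[-K,0]$, and derive its equation. Writing $\partial_x b(t,\theta):=\partial_x b(t,\bar x_{t-K,t}+\theta(x^{\rho}-\bar x)_{t-K,t},\,\bar u_{\mu_1}(t)+\theta\rho(u-\bar u)_{\mu_1}(t))$ (and similarly $\partial_v b(t,\theta)$), the fact that $b$ is $C^1$ with continuous bounded derivatives (H3) gives, via the integral form of Taylor's theorem, $b(t,x^{\rho}_{t-K,t},u^{\rho}_{\mu_1})-b(t,\bar x_{t-K,t},\bar u_{\mu_1})=\partial_x b(t)\big((x^{\rho}-\bar x)_{t-K,t}\big)+\rho\,\partial_v b(t)\big((u-\bar u)_{\mu_1}\big)+r_b^{\rho}(t)$, with $r_b^{\rho}(t)=\int_0^1[\partial_x b(t,\theta)-\partial_x b(t)]\big((x^{\rho}-\bar x)_{t-K,t}\big)\,d\theta+\rho\int_0^1[\partial_v b(t,\theta)-\partial_v b(t)]\big((u-\bar u)_{\mu_1}\big)\,d\theta$. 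Substituting $(x^{\rho}-\bar x)_{t-K,t}=\rho\hat x_{t-K,t}+\xi^{\rho}_{t-K,t}$ and subtracting \eqref{e-variational}, the drift source for $\xi^{\rho}$ becomes $\partial_x b(t)(\xi^{\rho}_{t-K,t})+C_b^{\rho}(t)+\rho A_b^{\rho}(t)+\rho B_b^{\rho}(t)$, where $A_b^{\rho}(t)=\int_0^1[\partial_x b(t,\theta)-\partial_x b(t)](\hat x_{t-K,t})\,d\theta$, $B_b^{\rho}(t)=\int_0^1[\partial_v b(t,\theta)-\partial_v b(t)]((u-\bar u)_{\mu_1})\,d\theta$, and $C_b^{\rho}(t)=\int_0^1[\partial_x b(t,\theta)-\partial_x b(t)](\xi^{\rho}_{t-K,t})\,d\theta$; the diffusion term is treated identically, producing $A_\sigma^{\rho},B_\sigma^{\rho},C_\sigma^{\rho}$. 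The structural point is that, for each $t$, the map $\xi\mapsto\partial_x b(t)(\xi_{t-K,t})+C_b^{\rho}(t;\xi)$ is a bounded path-dependent linear operator whose norm is bounded uniformly in $\rho$ (by (H5) and \eqref{e:norm-rho-t}), hence it satisfies (A4) with a constant independent of $\rho$. I would therefore view $\xi^{\rho}$ as the solution of a PSEE whose Lipschitz coefficient absorbs $C_b^{\rho}$ and $C_\sigma^{\rho}$ and whose inhomogeneous data are exactly $\rho A_b^{\rho}+\rho B_b^{\rho}$ and $\rho A_\sigma^{\rho}+\rho B_\sigma^{\rho}$. Applying \eqref{es-x} then yields $\mathbb E[\sup_t\|\xi^{\rho}\|_H^2]+\mathbb E\int_0^T\|\xi^{\rho}\|_V^2\,dt\le C\rho^2\big(\mathbb E\int_0^T\|A_b^{\rho}+B_b^{\rho}\|_H^2\,dt+\mathbb E\int_0^T\|A_\sigma^{\rho}+B_\sigma^{\rho}\|_{\mathcal L_2^0}^2\,dt\big)$ with $C$ independent of $\rho$.

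It then remains to show that each of $\mathbb E\int_0^T\|A_b^{\rho}\|_H^2\,dt$ and $\mathbb E\int_0^T\|B_b^{\rho}\|_H^2\,dt$ (and the $\sigma$-counterparts) tends to $0$, which forces the right-hand side to be $o(\rho^2)$; this limit passage inside the expectation is the main obstacle, and it is precisely why I would express the remainder through the \emph{fixed} variational solution $\hat x$ rather than through $x^{\rho}-\bar x$. One has $\|A_b^{\rho}(t)\|_H\le\big(\int_0^1\|\partial_x b(t,\theta)-\partial_x b(t)\|\,d\theta\big)\sup_{s}\|\hat x(s)\|_H$, whose square is dominated by a fixed integrable envelope (the uniform derivative bound times $T\sup_s\|\hat x(s)\|_H^2$) and tends to $0$ pointwise: the first estimate gives $\sup_t\|x^{\rho}-\bar x\|_H\to0$ in $L^2$, hence a.s. along a subsequence, while $u^{\rho}\to\bar u$, so the intermediate evaluation points converge and the continuity of $\partial_x b,\partial_v b$ makes the bracketed differences vanish. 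Dominated convergence (argued along subsequences, with the deterministic limit $0$ forcing convergence of the whole sequence of expectations) gives the result, and $B_b^{\rho}$ is handled the same way with the fixed envelope $\|(u-\bar u)_{\mu_1}\|_{H_1}^2$. The delicate part is thus entirely in keeping the a priori constant uniform in $\rho$ — achieved by absorbing the unknown-dependent remainder $C_b^{\rho},C_\sigma^{\rho}$ into the coefficient — so that the surviving sources involve only the fixed, square-integrable data $\hat x$ and $(u-\bar u)_{\mu_1}$.
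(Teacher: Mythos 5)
Your proposal is correct and follows essentially the same route as the paper: the first bound comes from the \emph{a priori} difference estimate with the control perturbation as the only source term, and for the second bound your $\xi^{\rho}=x^{\rho}-\bar x-\rho\hat x$ is just $\rho$ times the paper's $\tilde x^{\rho}$, with the identical regrouping (the averaged derivative $\int_0^1\partial_x b(t,\theta)\,d\theta$ acting on the unknown, and the fixed data $\hat x$ and $(u-\bar u)_{\mu_1}$ carrying the remainders) followed by dominated convergence, which you in fact justify in more detail than the paper does. One small correction: the uniform-in-$\rho$ Lipschitz bound for the absorbed coefficient follows already from the uniform boundedness of $\partial_x b,\partial_x\sigma$ in (H3) — invoking (H5) is unnecessary and would be inconsistent with the lemma's hypotheses, which are only (H1)--(H4).
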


\begin{proof}
	According to the state equation \eqref{state},
	\begin{equation*}
		\left\{ \begin{aligned} d(x^{\rho }(t)-\bar{x}(t))=& \,\,\Big[A(t)\big(x^{\rho
			}(t)-\bar{x}(t)\big)+\partial^\rho_x b(t)\big(x^{\rho }_{t-K,t}-\bar{x}_{t-K,t}\big)
		\\&\hspace{3em}	+\partial^\rho_vb(t)\rho \big(u_{\mu_1}(t)-\bar{u}_{\mu_1
			}(t)\big)\Big]dt \\ & \hspace{0.5em}+\Big[B(t)(x^{\rho
			}(t)-\bar{x}(t))+\partial^\rho_x\sigma(t)\big(x^{\rho }_{t-K,t}-\bar{x}_{t-K,t}\big)\\
			&\hspace{3em} +\partial^\rho_v\sigma(t)\rho \big(u_{\mu_1}
			(t)-\bar{u}_{\mu_1}(t)\big)\Big]dw(t),\quad t\in \lbrack 0,T], \\ x^\rho
			(t)-\bar x(t)=& \,\,0,\quad t\in \lbrack -K ,0], \end{aligned}\right. 
	\end{equation*}%
	where, for $\varphi =b,\sigma $ and $\tau =x,v$, we denote
	\begin{equation*}
		{\partial^\rho_\tau} \varphi(t):=\int_{0}^{1}\partial _{\tau }\varphi 
		\Big(t,\bar x_{t-K,t}+\lambda (x_{t-K,t}^{\rho }-\bar{x}_{t-K,t}),\bar{u}_{\mu_1}(t)+\lambda \rho (u_{\mu_1}(t)-\bar{u}_{\mu_1}(t))\Big)d\lambda .
	\end{equation*}
	By Theorem \ref{estimate}, we derive that 
	\begin{align*}
		& \mathbb{E}\Big[\sup\limits_{t\in[0,T]}\Vert x^{\rho }(t)-\bar{x}
		(t)\Vert _{H}^{2}\Big]+\mathbb{E}\int_{0}^{T}\Vert x^{\rho }(t)-\bar{x}%
		(t)\Vert _{V}^{2}dt \\
		& \leq C\rho ^{2}\mathbb{E}\int_{0}^{T}\Vert u_{\mu_1}(t)-\bar{u}_{\mu_1}(t)\Vert _{H_{1}}^{2}dt\leq C\rho ^{2},
	\end{align*}%
	which proves the first equality.
	
	Setting for $t\in[-K,T]$,
	$$\tilde{x}^{\rho } (t)=\frac{x^{\rho }(t)-\bar{x}(t)}{\rho }-\hat{x}
	(t),$$ we have 
	\begin{equation}
		\left\{ \begin{aligned} d\tilde{x}^{\rho }(t)=&\,\,
			\Big[A(t)\tilde{x}^{\rho }(t)+\partial^\rho_x b(t)(\tilde{x}^{\rho
			}_{t-K,t})+\big[\partial^\rho_x b(t)-\partial_x b(t)\big](\hat{x}_{t-K,t}) \\ &
			\hspace{1em}+\big[\partial^\rho_v b(t)-\partial_vb(t)\big]\big(u_{\mu_1}(t)-\bar{u}_{\mu_1}(t)\big)\Big]dt \\ &
			+\Big[B(t)\tilde{x}^{\rho }(t)+\partial^\rho_x\sigma(t)(\tilde{x}^{\rho
			}_{t-K,t})+\big[\partial^\rho_x\sigma(t)-\partial_x \sigma(t)\big](\hat{x}_{t-K,t}) \\ &
			\hspace{1em}+\big[\partial^\rho_v\sigma(t)-\partial_v\sigma(t)\big]\big(u_{\mu_1}(t)-\bar{u}_{\mu_1}(t)\big)\Big]dw(t), \\
			\tilde{x}^{\rho }(t)=&\,\, 0,\quad t\in \lbrack -K ,0]. \end{aligned}\right.
	\end{equation}%
	Utilizing the {\it a priori} estimate \eqref{es-x}, we have 
	\begin{align*}
		& \mathbb{E}\Big[\sup\limits_{t\in[0,T]}\big\| \tilde{x}^{\rho }(t)\Vert
		_{H}^{2}\Big]+\mathbb{E}\int_0^T\Vert \tilde{x}^{\rho }(t)\Vert _{V}^{2}dt \\
		&\leq C\mathbb{E}\int_{0}^{T}\Big\{\Big\| \big[\partial^\rho_xb
		(t)-\partial_x b(t)\big](\hat{x}_{t-K,t}) +[\partial^\rho_v b
		(t)-\partial_vb(t)]\big(u_{\mu_1}(t)-\bar{u}_{\mu_1}(t)\big) \Big\|
		_{H}^{2} \\
		& \hspace{2em}+\Big\| \big[ \partial^\rho_x\sigma(t)-\partial_x
		\sigma(t)\big](\hat{x}_{t-K,t}) +[\partial^\rho_v\sigma
		(t)-\partial_v\sigma(t)]\big(u_{\mu_1}(t)-\bar{u}_{\mu_1}(t)\big)%
		\Big\|_{H}^{2}\Big\}dt.
	\end{align*}%
	Then the dominated convergence theorem yields 
	\begin{equation*}
		\lim\limits_{\rho \to 0}\mathbb{E}\Big[\sup\limits_{t\in[0,T]}\Vert \tilde{x}^{\rho }(t)\Vert _{H}^{2}\Big]+\lim\limits_{\rho \to 0}\mathbb{E}\int_0^T\Vert \tilde{x}^{\rho }(t)\Vert _{V}^{2}dt=0,
	\end{equation*}
	and the second equality follows.
\end{proof}

\begin{lemma}
	\label{Le4-3} Assume (H1)-(H4) hold. Then, as 
	$\rho\to0$, 
	\begin{equation*}
		\mathbb{E}\Big[\sup\limits_{t\in[0,T]}|y^{\rho }(t)-\bar{y}(t)-\rho \hat{
			y}(t)| ^{2}\Big]+\mathbb{E}\int_{0}^{T}\|z^{\rho }(t)-\bar{z}(t)-\rho \hat{z}%
		(t)\|^{2}_{\mathcal{L}_2^0(\mathcal K,\mathbb{R})}dt =o(\rho ^{2}). 
	\end{equation*}
\end{lemma}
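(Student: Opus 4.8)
The plan is to run the same variational scheme as in Lemma~\ref{Myle4-1}, but for the backward equation~\eqref{y}, replacing the use of Theorem~\ref{estimate} by the standard a priori estimate for linear BSDEs. As a preliminary step I would first record the zeroth-order convergence
\begin{equation*}
	\mathbb{E}\Big[\sup_{t\in[0,T]}|y^\rho(t)-\bar y(t)|^2\Big]+\mathbb{E}\int_0^T\|z^\rho(t)-\bar z(t)\|_{\mathcal{L}_2^0(\mathcal K,\R)}^2dt=O(\rho^2),
\end{equation*}
which follows by writing the BSDE solved by $(y^\rho-\bar y,z^\rho-\bar z)$, estimating its terminal datum by $C\|x^\rho_{\mu_2}(T)-\bar x_{\mu_2}(T)\|_H$ and its driver increment through the uniformly bounded derivatives of $f$ from (H4) together with the forward bound of Lemma~\ref{Myle4-1}, and then applying the standard BSDE estimate and Gr\"onwall's inequality. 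In particular $x^\rho\to\bar x$ in $C_{\mathbb F}^2(-K,T;H)$, $y^\rho\to\bar y$ and $z^\rho\to\bar z$ as $\rho\to0$, which is needed to pass to the limit in the derivative coefficients below.

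Next, extending the averaging notation $\partial^\rho_\tau\varphi$ of Lemma~\ref{Myle4-1} to $f$ (with $\tau=x,y,z,v$, the $\lambda$-integrand now also evaluated at $\bar y(t)+\lambda(y^\rho(t)-\bar y(t))$ and $\bar z(t)+\lambda(z^\rho(t)-\bar z(t))$) and setting $\partial^\rho_{x^1}h(T):=\int_0^1\partial_{x^1}h\big(\bar x_{\mu_2}(T)+\lambda(x^\rho_{\mu_2}(T)-\bar x_{\mu_2}(T))\big)d\lambda$, the fundamental theorem of calculus expresses the increments of $f$ and $h$ exactly. Writing $\tilde x^\rho=(x^\rho-\bar x)/\rho-\hat x$ as in Lemma~\ref{Myle4-1}, $\tilde y^\rho=(y^\rho-\bar y)/\rho-\hat y$ and $\tilde z^\rho=(z^\rho-\bar z)/\rho-\hat z$, a direct computation (using the non-anticipative identity $\partial^\rho_xf(t)(\cdot)=\partial^\rho_xf(t)((\cdot)_{t-K,t})$) shows that $(\tilde y^\rho,\tilde z^\rho)$ solves the linear BSDE
\begin{equation*}
	\begin{aligned}
		-d\tilde y^\rho(t)=&\Big[\partial^\rho_x f(t)(\tilde x^\rho_{t-K,t})+\partial^\rho_y f(t)\tilde y^\rho(t)\\
		&\quad+\langle\partial^\rho_z f(t),\tilde z^\rho(t)\rangle_{\mathcal{L}_2^0(\mathcal K,\R)}+I^\rho(t)\Big]dt-\tilde z^\rho(t)dw(t),
	\end{aligned}
\end{equation*}
with terminal value $\tilde y^\rho(T)=\langle\partial^\rho_{x^1}h(T),\tilde x^\rho_{\mu_2}(T)\rangle_H+\langle\partial^\rho_{x^1}h(T)-\partial_{x^1}h(T),\hat x_{\mu_2}(T)\rangle_H$, where the remainder collecting the discrepancies between averaged and exact derivatives is
\begin{equation*}
	\begin{aligned}
		I^\rho(t)=&(\partial^\rho_x f(t)-\partial_x f(t))(\hat x_{t-K,t})+(\partial^\rho_y f(t)-\partial_y f(t))\hat y(t)\\
		&+\langle\partial^\rho_z f(t)-\partial_z f(t),\hat z(t)\rangle_{\mathcal{L}_2^0(\mathcal K,\R)}+\langle\partial^\rho_v f(t)-\partial_v f(t),u_{\mu_1}(t)-\bar u_{\mu_1}(t)\rangle_{H_1}.
	\end{aligned}
\end{equation*}

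Applying It\^o's formula to $|\tilde y^\rho|^2$ with the Burkholder--Davis--Gundy and Gr\"onwall inequalities, and using that $\partial^\rho_y f$ and $\partial^\rho_z f$ are uniformly bounded by (H4), I would obtain
\begin{equation*}
	\begin{aligned}
		&\mathbb{E}\Big[\sup_{t\in[0,T]}|\tilde y^\rho(t)|^2\Big]+\mathbb{E}\int_0^T\|\tilde z^\rho(t)\|_{\mathcal{L}_2^0(\mathcal K,\R)}^2dt\\
		&\le C\Big(\mathbb{E}|\tilde y^\rho(T)|^2+\mathbb{E}\int_0^T\big|\partial^\rho_x f(t)(\tilde x^\rho_{t-K,t})\big|^2dt+\mathbb{E}\int_0^T|I^\rho(t)|^2dt\Big),
	\end{aligned}
\end{equation*}
so that dividing the claim by $\rho^2$ it suffices to send each term on the right to $0$. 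The main obstacle is the path-dependent source $\partial^\rho_x f(t)(\tilde x^\rho_{t-K,t})$: here I would invoke (H5), which makes $\partial^\rho_x f$ a bounded operator from $L^2(-K,T;H)$ to $L^2(0,T;\R)$ with bound $M_0M$ independent of $\rho$ and $\omega$ (Proposition~\ref{Prop1}, applied to the $\lambda$-average of representing measures), whence $\mathbb{E}\int_0^T|\partial^\rho_x f(t)(\tilde x^\rho_{t-K,t})|^2dt\le C\,\mathbb{E}\int_{-K}^T|\tilde x^\rho(t)|^2dt\to0$ by the second estimate of Lemma~\ref{Myle4-1}. The terminal term and $I^\rho$ tend to $0$ by dominated convergence: the derivatives of $f$ and $h$ are continuous and uniformly bounded by (H3)--(H4), their arguments converge by the preliminary step (so that $\partial^\rho_\tau f\to\partial_\tau f$ and $\partial^\rho_{x^1}h\to\partial_{x^1}h$ pointwise along an a.s.-convergent subsequence), the test processes $\hat x,\hat y,\hat z$ and $u_{\mu_1}-\bar u_{\mu_1}$ are fixed and square-integrable, and $\tilde x^\rho_{\mu_2}(T)\to0$ in $L^2$. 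Combining these gives the desired $o(\rho^2)$ bound; the delicate points are the uniform-in-$\rho$ operator control from Proposition~\ref{Prop1} and the subsequence extraction needed to legitimately apply continuity of the derivatives pointwise before passing to the limit.
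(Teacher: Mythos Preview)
Your proof follows the same strategy as the paper's: write the linear BSDE for $(\tilde y^\rho,\tilde z^\rho)$, apply the standard a priori estimate for BSDEs, and conclude by dominated convergence together with Lemma~\ref{Myle4-1}. The structure, the preliminary $O(\rho^2)$ step, and the limiting arguments are all in line with the paper.

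The one place where you diverge is in treating the path-dependent source $\partial^\rho_x f(t)(\tilde x^\rho_{t-K,t})$, which you call the ``main obstacle'' and handle via (H5) and Proposition~\ref{Prop1}. This is unnecessary, and in fact the lemma is stated only under (H1)--(H4). Assumption (H4) already makes $\partial_x f$ uniformly bounded as a functional on $C(-K,T;H)$, so
\[
	\big|\partial^\rho_x f(t)(\tilde x^\rho_{t-K,t})\big|^2\le C\,\|\tilde x^\rho_{t-K,t}\|_{C(-K,T;H)}^2\le C\sup_{s\in[-K,T]}\|\tilde x^\rho(s)\|_H^2,
\]
and the sup-norm convergence in Lemma~\ref{Myle4-1} gives $\mathbb{E}\int_0^T|\partial^\rho_x f(t)(\tilde x^\rho_{t-K,t})|^2dt\to0$ directly, with no appeal to representing measures or the $L^2$-operator bound of Proposition~\ref{Prop1}. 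The paper proceeds exactly this way, which is why it can dispense with (H5) here. Your route is not wrong under (H1)--(H5), but it imports a hypothesis the lemma does not assume, and what you identify as the delicate step is in fact the easy one.
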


\begin{proof}
	It suffices to show
	\begin{equation*}
		\lim\limits_{\rho \rightarrow 0}\mathbb{E}\Big[ \sup\limits_{t\in[0,T]}\left\vert \tilde y^\rho(t)\right\vert ^{2}
		\Big] +\lim\limits_{\rho \rightarrow 0}\mathbb{E}\int_{0}^{T}\left\|\tilde z^\rho(t)\right\|^{2}_{\mathcal{L}_2^0(\mathcal K,\mathbb{R})}dt=0,
	\end{equation*}
	where
	\begin{equation*}
		\tilde{y}^{\rho }(t):=\frac{y^{\rho }(t)-\bar{y}(t)}{\rho }-\hat{y}(t)\ \text{and}\ \tilde{z}^{\rho }(t):=\frac{z^{\rho }(t)-\bar{z}(t)}{\rho }-\hat{z}(t), \quad t\in[0,T].
	\end{equation*}
	The pair $(\tilde y^\rho(\cdot), \tilde z^\rho(\cdot))$ solves the following BSDE \begin{align*}
		\left\{ \begin{aligned} -d\tilde{y}^{\rho }(t)=&
			\,\,\Big\{\partial^\rho_{x}f(t)(\tilde{x}^{\rho
			}_{t-K,t})+\big[\partial^\rho_x f(t)-\partial_xf(t)\big](\hat{x}_{t-K,t})+\partial^\rho_{y} f(t)\tilde{y}^{\rho
			}(t)\\&\hspace{0.5em}+\big[\partial^\rho_{y}f(t)-\partial_{y}f(t)\big]\hat{y}(t)+\big<
			\partial^\rho_{z} f(t),\tilde{z}^{\rho }(t)\big>_{\mathcal L_2^0(\mathcal K;\R)} \\ &\hspace{0.5em}+\big<\partial^\rho_{z}f(t)-\partial_{z}f(t),\hat{z}(t)\big>_{\mathcal
				L_2^0(\mathcal K;\R)}\\&\hspace{0.5em}+\big<\partial^\rho_v f(t)-\partial_vf(t),u_{\mu_1}(t)-\bar{u}_{\mu_1}(t)\big>_{H_{1}}\Big\}dt-\tilde{z}^{\rho }(t)dw(t),\quad t\in [0,T],
			\\ \tilde{y}^{\rho }(T)=& \,\,\lla\partial^\rho_{x^1}h(T),\tilde{x}_{\mu_2}^{\rho
			}(T)\rra_{H}+\lla\partial^\rho_{x^1}h(T)-\partial_{x^1}h(T),
			\hat{x}_{\mu_2}(T)\rra_{H},
		\end{aligned}\right.
	\end{align*}
	where
	\begin{align*}
		\partial^\rho_{x^1}h(T):=&\int_{0}^{1}\partial_{x^1}h
		\Big(\bar{x}_{\mu_2}(T)+\lambda
		\big(x_{\mu_2}^{\rho}(T)-\bar{x}_{\mu_2}(T)\big)\Big)d\lambda ,
	\end{align*}
	and for $\tau =x,y,z,v$, 
	\begin{align*}
		\partial^\rho_{\tau }f(t):=& \int_{0}^{1}\partial_{\tau }f\Big(t,\bar{x}_{t-K,t}+\lambda
		\big(x^{\rho }_{t-K,t}-\bar{x}_{t-K,t}\big), \bar{y}(t)+\lambda \big(y^{\rho }(t)-\bar{y}(t)\big),\\
		&\hspace{3em}\bar{z}(t)+\lambda \big(z^{\rho }(t)-\bar{z}(t))\big),\bar{u}_{\mu_1}(t)+\lambda \rho \big(u_{\mu_1}(t)-\bar{u}_{\mu_1}(t)\big)\Big)d\lambda .
	\end{align*}%
	By the {\it a priori} estimate for classical BSDEs and Lemma \ref{Myle4-1}, we have 
	\begin{align*}
		& \mathbb{E}\Big[\sup\limits_{t\in[0,T]}|\tilde{y}^{\rho }(t)|^{2}\Big]+%
		\mathbb{E}\int_{0}^{T}\|\tilde{z}^{\rho }(t)\|^{2}_{\mathcal{L}_2^0(\mathcal K,%
			\mathbb{R})}dt \\
		& \leq C\Big\{\mathbb{E}\int_{0}^{T}\Big|\partial^\rho_{x}f(t)(\tilde{x}^{\rho
		}_{t-K,t})+\big[\partial^\rho_x{f}(t)-\partial_xf(t)\big](\hat{x}_{t-K,t})+\big[\partial^\rho_{y}{f}(t)-\partial_{y}f(t)\big]\hat{y}(t) \\ &\hspace{4em}+\big<\partial^\rho_{z}{f}(t)-\partial_{z}f(t),\hat{z}(t)\big>_{\mathcal
			L_2^0(\mathcal K;\R)}+\big<\partial^\rho_v{f}(t)-\partial_vf(t),u_{\mu_1}(t)-\bar{u}_{\mu_1
		}(t)\big>_{H_{1}}\Big|^2dt \\
		& \hspace{3em}+\mathbb{E}\Big[\Vert \tilde{x}^{\rho }(T)\Vert _{H}^{2}+ \Big|\big<\partial^\rho_{x^1}{h}(T)-\partial_{x^1}h(T),%
		\hat{x}_{\mu_2}(T)\big>_{H}\Big|^2 \Big]\Big\}\rightarrow 0, \text{ as } \rho \to 0.
	\end{align*}%
	The proof is complete.
\end{proof}

\subsection{Maximum principle}

In this subsection, we introduce the anticipative adjoint equation and then derive the stochastic maximum principle.

Let $k(\cdot)$ solve  the following adjoint SDE associated with the cost functional $
y(\cdot)$: 
\begin{equation}
	\begin{cases}
		dk(t)=\partial_{y}f(t)k(t)dt+\partial_{z}f(t)k(t)dw(t),\quad t\in \lbrack
		0,T], \\ 
		k(0)=-1.%
	\end{cases}
	\label{k}
\end{equation}
We consider the adjoint BSEE
\begin{equation}\label{e-adjoint}
	\left\{ \begin{aligned} p(t)=&
		-\int_{I_t}\mathbb{E}^{\mathcal{F}_{s}}\big[k(T)
		\partial_{x^1}h(T)\big]\mu_2(d(s-T))+\int_t^T\Big\{A^{\ast }(s)p(s)+B^{\ast
		}(s)q(s)\\&\hspace{1em}+\mathbb{E}^{\mathcal{F}_{s}}\Big[%
		\rho^*_{b,s}\big( p_{s,s+K}|_{[0,T]}\big)+\rho^*_{\sigma,s}\big( q_{s,s+K}|_{[0,T]}\big)-\rho^*_{f,s}\big( { k_{s,s+K}|_{[0,T]}}\big)\Big]
		\Big\}ds\\ &-\int_t^Tq(s)dw(s),\quad t\in \lbrack 0,T], \\ p(t)=&
		0,\,q(t)=0,\quad t\in (T,T+K ].
	\end{aligned}\right. 
\end{equation}
Here,   $I_t:=(t,T]\cap[T-K,T]$ for $t\in[0,T]$,  $\rho_b^*=\big\{\rho
_{b,t}^{\ast }(\cdot),~t\in [ -K,T]\big\}$, $\rho_\sigma^*=\big\{\rho
_{\sigma,t}^{\ast }(\cdot),~t\in [ -K,T]\big\},  \rho_f^*=\big\{\rho
_{f,t}^{\ast }(\cdot),~t\in [ -K,T]\big\}$ are the adjoint operators of $\rho_b$, $\rho_\sigma$, $\rho_f$, respectively (recalling Proposition \ref{thm:rho*}). Clearly the BSEE \eqref{e-adjoint} is {anticipative} in the sense of \cite{10py} (see also Remark~\ref{rem:rho*}). 

\begin{remark}\label{Rem5-1}  Assume condition (H5). By Proposition~\ref{thm:rho*},  for any fixed $t\in[0,T]$, $\rho_b^*$, $\rho_\sigma^*$ and  $\rho_f^*$ are bounded operators from  $L^{2}(t,T;H)$, $L^{2}(t,T;\mathcal{L}_{2}^{0})$ and $L^{2}(t,T;\mathbb{R})$ to $L^2(t,T; H)$,  respectively. Thus, for $(t,\omega )\in \lbrack 0,T]\times \Omega $, $p,p'\in L^2(0,T+K;H)$, and $q,q'\in L^2(0,T+K;\mathcal{L}_{2}^{0})$, we have from \eqref{e:rho*-rem} that
	\begin{align*}
		&\int_t^T\Big\{\big\|\rho_{b,s}^*(p_{s,s+K}|_{[0,T]})-\rho_{b,s}^*(p'_{s,s+K}|_{[0,T]})\big\|^2_H+\big\|\rho_{ \sigma,s}^*(q_{s,s+K}|_{[0,T]})-\rho_{\sigma,s}^*(q'_{s,s+K}|_{[0,T]})\big\|^2_H\Big\}ds\\&=\int_t^T\Big\{\big\|\rho_{b,s}^*(p|_{[0,T]})-\rho_{b,s}^*(p'|_{[0,T]})\big\|^2_H+\big\|\rho_{\sigma,s}^*(q|_{[0,T]})-\rho_{\sigma,s}^*(q'|_{[0,T]})\big\|^2_H\Big\}ds 
		\\&\leq C\int_t^{T}\Big\{\|p(s)-p'(s)\|_H^2+\|q(s)-q'(s)\|^2_{\mathcal L_2^0}\Big\}ds
		\\&\leq  C\int_t^{T+K}\Big\{\|p(s)-p'(s)\|_H^2+\|q(s)-q'(s)\|^2_{\mathcal L_2^0}\Big\}ds,
	\end{align*}
	which verifies {(B4)}. 
	Then by Theorem \ref{ABSEE-thm}, equation \eqref{e-adjoint} admits a unique solution $(p(\cdot),q(\cdot))\in  \mathscr P\times L_{\mathbb{F%
	}}^{2}(0,T+K; \mathcal{L}_{2}^{0})$.
\end{remark}

The \emph{Hamiltonian} $H:[0,T] \times\Omega\times    { C(-K,T;H)}\times \mathbb{R}
\times \mathcal{L}_{2}^{0}(\mathcal K;\mathbb{R})\times H_{1}\times
H\times \mathcal{L}_{2}^{0}\times \mathbb{R}\rightarrow \mathbb{R}$ is defined by
\begin{equation}
	\begin{aligned} H(t,\omega,x,y,z,v,p,q,k):=&\left\langle b(t,x,v),p\right\rangle _{H}+\lla \sigma (t,x,v),q\rra
		_{\mathcal{L}_{2}^{0}} -f(t,x,y,z,v)k.\end{aligned}
	\label{Hamiltionian}
\end{equation}
Denote
$$
H(t)=H(t,\bar x_{t-K,t},\bar y(t), \bar z(t),\bar u_{\mu_1}(t),p(t),q(t),k(t)),$$
and  for $\tau=x,y,z,v$,
$$\partial_{\tau}H(t)=\partial_{\tau}H(t,\bar x_{t-K,t},\bar y(t), \bar z(t),\bar u_{\mu_1}(t),p(t),q(t),k(t)),$$
where $(p(\cdot),q(\cdot))$ solves \eqref{e-adjoint}, and $k(\cdot)$ solves \eqref{k}.

We are ready to derive the stochastic maximum principle for our control problem.
\begin{theorem}
	\label{SMP} Suppose that  (H1)-(H5) hold. Let $\bar{u}(\cdot )$ be an optimal
	control for the control problem, $\bar{x}(\cdot )$ and $(\bar{y}(\cdot ),%
	\bar{z}(\cdot ))$ be the corresponding solutions to \eqref{state} and 
	\eqref{y}, respectively. Assume that $(p(\cdot ),q(\cdot ))$ is the solution
	of \eqref{e-adjoint} with $k(\cdot )$ being the solution of $\eqref{k}$. Then, 
	\begin{equation}
		\Big<\mathbb{E}^{\mathcal{F}_{t}}\Big[\int_{-K
		}^{0}\partial_vH(t-s)\mu_1(ds)\Big],u-\bar{u}(t)\Big>_{H_{1}}\geq 0,
		\label{ne}
	\end{equation}
	holds for all $u\in U$ and $dt\times dP$-almost all $(t,\omega)\in[0,T]%
	\times \Omega$.
\end{theorem}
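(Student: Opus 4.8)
The plan is to deduce \eqref{ne} from the first-order optimality condition through two successive duality computations and a final dual analysis of the control delay. Since $\bar u(\cdot)$ is optimal and $U$ is convex, the perturbation $u^{\rho}=\bar u+\rho(u-\bar u)$ is admissible, so $J(u^{\rho})=y^{\rho}(0)\ge \bar y(0)=J(\bar u)$, and hence $\rho^{-1}(y^{\rho}(0)-\bar y(0))\ge 0$. Letting $\rho\to 0$ and invoking Lemma~\ref{Le4-3} gives the key inequality $\hat y(0)\ge 0$, where $(\hat y,\hat z)$ solves the variational BSDE \eqref{e-hat-yz}. It then remains to express $\hat y(0)$ in terms of the adjoint processes and the control perturbation, for which I write $\Delta u:=u_{\mu_1}-\bar u_{\mu_1}$.

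The first duality is carried out against $k(\cdot)$. I would apply It\^o's formula to the real-valued product $k(t)\hat y(t)$ on $[0,T]$ using \eqref{k} and \eqref{e-hat-yz}; the $\partial_y f$ and $\partial_z f$ contributions cancel against the drift of $\hat y$ and the cross-variation term, the martingale part vanishes under $\E$, and with $k(0)=-1$ and $\hat y(T)=\langle\partial_{x^1}h(T),\hat x_{\mu_2}(T)\rangle_H$ I obtain
\begin{equation*}
\hat y(0)=-\E\big[k(T)\langle\partial_{x^1}h(T),\hat x_{\mu_2}(T)\rangle_H\big]-\E\int_0^T k(t)\rho_{f,t}(\hat x_{t-K,t})\,dt-\E\int_0^T k(t)\langle\partial_v f(t),\Delta u(t)\rangle_{H_1}\,dt.
\end{equation*}

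The second duality is carried out against $(p,q)$. I would apply the It\^o product rule (obtained from Lemma~\ref{Ito-lemma} by polarization) to $\langle p(t),\hat x(t)\rangle_H$ on $[0,T]$, using the variational equation \eqref{e-variational} and the adjoint ABSEE \eqref{e-adjoint}. The terms involving $A,A^{\ast}$ and $B,B^{\ast}$ cancel by the definition of the adjoint operators. The crucial step is to transfer the anticipative operators $\rho_b^{\ast},\rho_\sigma^{\ast},\rho_f^{\ast}$ back onto $\hat x$ via the duality identity \eqref{e:dualilty}: this is legitimate because $\hat x\equiv 0$ on $[-K,0]$ and $\hat x(t)$ is $\mathcal F_t$-measurable, so the conditional expectations may be dropped against the adapted $\hat x$. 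The $\rho_b$ and $\rho_\sigma$ contributions then cancel in conjugate pairs, leaving only the $\rho_f$ term; identifying the bounded-variation part $\Phi(t):=-\int_{I_t}\mathbb E^{\mathcal F_s}[k(T)\partial_{x^1}h(T)]\mu_2(d(s-T))$ of \eqref{e-adjoint}, i.e.\ the running terminal integral $\int_{(0,T]}\langle\hat x(s),d\Phi(s)\rangle$, with $\E[k(T)\langle\partial_{x^1}h(T),\hat x_{\mu_2}(T)\rangle_H]$ (by the change of variable $s\mapsto s-T$ in $\mu_2$, the tower property, and $\hat x\equiv 0$ on $[-K,0]$), and using $p(T)=0$, $\hat x(0)=0$, I arrive at
\begin{equation*}
\E\big[k(T)\langle\partial_{x^1}h(T),\hat x_{\mu_2}(T)\rangle_H\big]+\E\int_0^T k(t)\rho_{f,t}(\hat x_{t-K,t})\,dt=-\E\int_0^T\big\{\langle p(t),\partial_v b(t)\Delta u(t)\rangle_H+\langle q(t),\partial_v\sigma(t)\Delta u(t)\rangle_{\mathcal L_2^0}\big\}dt.
\end{equation*}
Substituting this into the first identity collapses all $\hat x$-dependence, and recalling the Hamiltonian \eqref{Hamiltionian} gives $\hat y(0)=\E\int_0^T\langle\partial_v H(t),\Delta u(t)\rangle_{H_1}\,dt$.

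Finally, to resolve the control delay I would apply the change of variables of Lemma~\ref{inte-trans-lem} (equivalently the dual analysis of Remark~\ref{Rem:dual-for-u}) with $\nu_0=\mu_1$ to rewrite $\langle\partial_v H(t),\int_{-K}^0\Delta u(t+s)\mu_1(ds)\rangle_{H_1}$ as a pairing against $(u-\bar u)(r)$, the indicator $\mathbb I_{[r-T,r]}$ restricting the argument of $\partial_v H$ to its natural domain $[0,T]$. Since $u=\bar u=v_0$ on $[-K,0]$, only $r\in[0,T]$ survives, and conditioning on $\mathcal F_r$ yields $\hat y(0)=\E\int_0^T\langle\mathbb E^{\mathcal F_r}[\int_{-K}^0\partial_v H(r-s)\mu_1(ds)],u(r)-\bar u(r)\rangle_{H_1}\,dr\ge 0$ for every $u\in\mathcal U$. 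A standard localization (spike) argument over a countable dense subset of $U$, using convexity and the separability of $H_1$, then upgrades this to the pointwise inequality \eqref{ne} for $dt\times dP$-almost all $(t,\omega)$. I expect the main obstacle to be the second duality: rigorously justifying the product It\^o formula for the anticipative, running-terminal ABSEE \eqref{e-adjoint} paired with the forward $\hat x$, correctly transferring the non-adapted operators $\rho^{\ast}$ across \eqref{e:dualilty} under the conditional expectations, and matching the running terminal term to the terminal cost.
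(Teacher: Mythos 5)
Your proposal is correct and follows essentially the same route as the paper: the optimality condition $\hat y(0)\ge 0$ via Lemma~\ref{Le4-3}, It\^o's formula applied to $k(t)\hat y(t)$ and to $\langle p(t),\hat x(t)\rangle_H$, transfer of the anticipative operators $\rho_b^{\ast},\rho_\sigma^{\ast},\rho_f^{\ast}$ through the duality \eqref{e:dualilty} using $\hat x\equiv 0$ on $[-K,0]$ and $p\equiv 0$ on $(T,T+K]$, and the dual analysis of the control delay from Remark~\ref{Rem:dual-for-u} to arrive at \eqref{Myeq4-11} and hence \eqref{ne}. The only cosmetic difference is the order of the two duality computations, which is immaterial.
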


\begin{proof}
	Recalling \eqref{e-variational} and \eqref{e-adjoint}, applying It\^{o}'s formula to $\left\langle p(t),\hat{x}(t)\right\rangle _{H}$, and then taking
	expectation, we have 
	\begin{align*}
		& -\mathbb{E }\int_{T-K}^T\big<
		\partial_{x^1}h(T)k(T),\hat x(t)\big>_H\mu_2(d(t-T)) \\
		&=\mathbb{E}\int_{0}^{T}\Big\{\Big<\rho_{b,t}(\hat x_{t-K,t})+\partial_{v
		} b(t)\big(u_{\mu_1}(t)-\bar{u}_{\mu_1}(t)\big),p(t)\big>_{H} \\
		&\hspace{4em} -\mathbb{E}^{\mathcal{F}_{t}}\Big[\rho^*_{b,t}\big(p_{t,t+K}|_{[0,T]}\big)+\rho^*_{\sigma,t}\big( q_{t,t+K}|_{[0,T]}\big)-\rho^*_{f,t}\big(k_{t,t+K}|_{[0,T]}\big)\Big],\hat{x}(t)\Big>_{H} \\
		& \hspace{4em}+\big<\rho_{\sigma,t}(\hat x_{t-K,t})+\partial_{v
		} \sigma(t)\big(u_{\mu_1}(t)-\bar{u}_{\mu_1}(t)\big),q(t)\big>_{{\mathcal{L}_2^0}}\Big\}dt.
	\end{align*}
	Noting $\hat{x}(t) =0$ for $t\in[-K, 0]$, by \eqref{e:dualilty},  \eqref{non-anti}  and Remark \eqref{rem:rho*}  we get the following equalities: 
	\begin{align*}
		\E\int_0^T\big<\rho_{b,t}(\hat x_{t-K,t}),p(t)\big>dt&=\E\int_0^T\Big<\E^{\mathcal F_t}\big[\rho^*_{b,t}(p_{t,t+K}|_{[0,T]})\big],\hat x(t)\Big>dt,\\
		\E\int_0^T\big<\rho_{\sigma,t}(\hat x_{t-K,t}),q(t)\big>dt&=\E\int_0^T\Big<\E^{\mathcal F_t}\big[\rho^*_{\sigma,t}(q_{t,t+K}|_{[0,T]})\big],\hat x(t)\Big>dt,\\
		\E\int_0^T\rho_{f,t}(\hat x_{t-K,t})k(t)dt&=\E\int_0^T\Big<\E^{\mathcal F_t}\big[\rho^*_{f,t}(k_{t,t+K}|_{[0,T]})\big],\hat x(t)\Big>dt.
	\end{align*}
	Similarly, 
	noting that $u(t)-\bar u(t)=0$ for $t\in[-K,0]$,  we also have
	\begin{align*}
		&\E\int_0^T\big<\partial_{v
		} b(t)(u_{\mu_1}(t)-\bar{u}_{\mu_1}(t)),p(t)\big>_{H}dt\\
		&=\E\int_0^T\Big<\int_{-K}^{0}(\partial_vb(t-s))^{\ast }p(t-s) \mathbb I_{[t-T,t]}(s)\mu_1(ds), u(t)-\bar u(t)\Big>_{H_1}dt\\
		&=\E\int_0^T\Big<\int_{-K}^{0}(\partial_vb(t-s))^{\ast }p(t-s)\mu_1(ds), u(t)-\bar u(t)\Big>_{H_1}dt,
	\end{align*}
	where the first step follows from Remark  \ref{Rem:dual-for-u} and the second step  is due to the fact $p(t)=0$ for $t\in(T, T+K]$. 
	Consequently, recalling the notation given by \eqref{e:x=dv},  we have
	\begin{equation}
		\begin{split}
			& -\mathbb{E }[k(T) \langle \partial_{x^1}h(T), \hat x_{\mu_2} (T)\rangle_H]\\&=\mathbb{E}\int_{0}^{T}\Big\{\big<\E^{\mathcal F_t}\big[\rho^*_{f,t}({ k_{t,t+K}|_{[0,T]}})\big],\hat{x}(t)\big>_{H}\\&\hspace{3em}+\Big<\mathbb{E}^{\mathcal{F}_{t}}\Big[\int_{-K}^{0}(\partial_vb(t-s))^{\ast }p(t-s)\mu_1(ds)\\&\hspace{3em} +\int_{-K}^{0}(\partial_v\sigma(t-s)) ^{\ast
			}q(t-s)\mu_1(ds)\Big],u(t)-\bar{u}(t)\Big>_{H_{1}}\Big\}dt.
		\end{split}
		\label{eq:32}
	\end{equation}
	Applying It\^{o}'s formula to $k(t)\hat{y}(t)$ on $[0,T]$, we obtain by
	Proposition \ref{thm:rho*} that 
	\begin{align}\label{eq:33}
			\hat{y}(0)&=-\mathbb{E}\big[k(T)\hat y(T)\big]  -\mathbb{E}\int_{0}^{T}\Big\{\rho _{f,t}(\hat x_{t-K,t})-\left\langle \partial_vf(t),u_{\mu_1}(t)-\bar{u}
			_{\mu_1}(t)\right\rangle _{H_{1}}\Big\}k(t)dt\nonumber\\
			& =-\mathbb{E}\big[k(T)\lla\partial_{x^1}h(T),\hat{x}
			_{\mu_2}(T))\rra _{H} \big]-\mathbb{E}\int_{0}^{T}\Big\{\rho _{f,t}(\hat x_{t-K,t})\\
			&\hspace{3em}-\Big<\mathbb{E}^{\mathcal{F}
				_{t}}\Big[\int_{-K}^{0}(\partial_vf(t-s))^*k(t-s)\mu_1(ds)\Big],u(t)-\bar{u}(t)\Big>
			_{H_{1}}\Big\}dt. \nonumber
		\end{align}

	Then, it follows from  \eqref{eq:32} and \eqref{eq:33} that 
	\begin{equation}\label{Myeq4-11}
		\begin{aligned}  
			\hat{y}(0)=& \mathbb{E}\int_{0}^{T}\Big<\mathbb{E}^{\mathcal{F}_{t}}\Big[\int_{-K
			}^{0}(\partial_vb(t-s))^{\ast }p(t-s)\mu_1(ds) \\
			& \hspace{3em}+\int_{-K }^{0}(\partial_v\sigma(t-s))
			^{\ast }q(t-s)\mu_1(ds) \\
			& \hspace{3em}-\int_{-K }^{0}(\partial_vf(t-s))^*k(t-s)\mu_1(ds)\Big],u(t)-%
			\bar{u}(t)\Big>_{H_{1}}dt.  
		\end{aligned}
	\end{equation}
	On the other hand, by Lemma \ref{Le4-3} and the optimality of  $\bar{u}(\cdot)$, we get 
	\begin{equation*}
		0\leq J(u^{\rho }(\cdot ))-J(\bar{u}(\cdot ))=\rho \hat{y}(0)+o(\rho ).
	\end{equation*}
	This together with (\ref{Myeq4-11}) implies 
	\begin{equation*}
		\hat{y}(0)=\mathbb{E}\Big[ \int_{0}^{T}\Big<\mathbb{E}^{\mathcal{F}
			_{t}}\Big[\int_{-K}^{0}\partial_vH(t-s)\mu_1(ds)\Big],u(t)-\bar{u}(t)
		\Big>_{H_{1}}dt\Big] \geq 0,
	\end{equation*}
	from which we obtain the maximum principle \eqref{ne}.
\end{proof}

\begin{remark}
	In the above proof, $\hat{x}(t)$ is continuous whereas $p(t)$ may not be. So
	the possible jumps of $p(t)$ do not contribute when applying It\^{o}'s formula
	to $\left\langle p(t),\hat{x}(t)\right\rangle_{H}$.
\end{remark}

\begin{remark}\label{Integral-delay-rem}
	Equations \eqref{state} and \eqref{y} are path-dependent in the sense that the coefficients $b,\sigma, f$ depend on the past trajectories of $x$ on $[t-K,t]$  at the present time  $t\in [0,T]$. One typical  path dependence is of the form of an integral with respect to a finite measure.  More specifically,  set  $b(t,x_{t-K,t},u_{\mu_1}(t))=\tilde b(t,x_{\mu}(t),u_{\mu_1}(t)),
	\sigma(t,x_{t-K,t},u_{\mu_1}(t))=\tilde \sigma(t,x_{\mu}(t),u_{\mu_1}(t))$, and $
	f(t,x_{t-K,t},u_{\mu_1}(t),y(t),z(t),u_{\mu_1}(t))=\tilde f(t,x_{\mu}(t),u_{\mu_1}(t),y(t),z(t),\\u_{\mu_1}(t))$,
	where 
	\begin{equation*}
		(\tilde b,\tilde \sigma ): \lbrack 0,T]\times \Omega \times  H\times
		H_{1}\rightarrow H\times \mathcal{L}_{2}^{0},\ \tilde f:[0,T]\times \Omega
		\times H\times \mathbb{R}\times \mathcal{L}_{2}^{0}(\mathcal K,\R)\times
		H_{1}\rightarrow \mathbb{R}
	\end{equation*}
 satisfies standard Lipschitz continuity, measurability, integrability  and  differentiability assumptions, and
	$$x_\mu(t):=\int_{-K}^0x(t+s)\mu(ds)$$
	is an integral delay with respect to a finite measure $\mu$  on $[-K,0]$.  Then, the adjoint equation $\eqref{e-adjoint}$ becomes, in view of  Example \ref{example1} and Remark \ref{Rem:dual-for-u},
	\begin{equation*}
		\left\{ 
		\begin{aligned}
			p(t)=& -\int_{I_t}\mathbb{E}^{\mathcal{F}_{s}}
			\big[k(T)\partial_x h(T)\big]\mu_2(d(s-T))\\
			&+\int_t^T\Big\{A^{\ast }(s)p(s)+B^{\ast }(s)q(s)+\mathbb{E}^{\mathcal{F}_{s}}\Big[\int_{-K}^{0}(\partial_x\tilde b(s-r))^{\ast
			}p(s-r)\mu(dr)\\
			& \hspace{1.5em}+\int_{-K}^{0}(\partial_x\tilde\sigma (s-r))^{\ast
			}q(s-r)\mu(dr) -\int_{-K}^{0}\partial_x\tilde f(s-r)k(s-r)\mu(dr)\Big]
			\Big\}ds  \\
			&-\int_t^Tq(s)dw(s),\quad t\in \lbrack 0,T],\\
			p(t)=& 0,\,q(t)=0,\quad t\in (T,T+K],
		\end{aligned}
		\right. 
	\end{equation*}
	and we can apply Theorem~\ref{SMP} to get the maximum principle.  
	
	Existing literature primarily considers the following  two special forms of  integral delay  (see, e.g., \cite{10cw,11osz,guatteri2021stochastic,16ms} for more details): (a) the pointwise delay $x_{\mu}(t)=x(t-K)$ when $\mu(ds)$ is the Dirac delta measure at $-K$; (b) the moving average delay $x_\mu(t)=\int_{-K}^0 x(t+s) ds$ with respect to the Lebesgue measure $\mu(ds)=ds$.  
\end{remark}

\begin{remark}\label{rem:time-state dual}
	The dual analysis  in our system is performed for pointwise $\omega.$ The duality analysis in \cite{hu1996maximum}   involves expectation and conditional expectation, due to which the  coefficient functions therein were assumed to be deterministic.
\end{remark}
\begin{remark} 
	\label{Extension Rem} In view of the results on PSEEs and ABSDEs established
	in the previous sections, some straightforward adaptions of the proof of
	Theorem \ref{SMP} shall yield a variety of extensions. We list some 
	directions below.
	
	\begin{itemize}
		\item[(i)] The delay measure $\nu_1$ appearing in the SEE \eqref{EQ-1} and
		the  BSDE \eqref{BSDE} can be distinct.

		\item[(ii)]   The measures  $\nu_1,\nu_2$ can be extended to finite \emph{signed} measures on $[T-K,T],$ and furthermore, they can be $\mathbb{R}^d$-valued, for any integer $d>1$.

	\end{itemize}
\end{remark}

\subsection{Sufficient conditions}

In this subsection, we will show that the necessary condition \eqref{ne} for
an optimal control is also sufficient under some convexity conditions.

\begin{theorem}
	\label{sufficient} Suppose that (H1)-(H5) hold. Let $\bar u(\cdot)\in 
	\mathcal{U}$ and $\bar{x}(\cdot )$ and $(\bar{y}(\cdot ),\bar{z}(\cdot ))$
	be the corresponding solutions of \eqref{state} and \eqref{y}, respectively.
	Assume
	
	\begin{enumerate}
		\item[(a)] $h(\cdot)$ is   convex;
		
		\item[(b)] the Hamiltonian $H$ given in \eqref{Hamiltionian} is convex  for each $
		(t,\omega,p,q,k)$ in the sense that for $(x,y,z,v), (x',y',z',v')\in C(-K,T;H)\times \R\times \mathcal L_2^0(\mathcal K;\R)\times H_1$,
		\begin{align*}
			&H(t,\omega,x,y,z,v,p,q,k)-H(t,\omega,x',y',z',v',p,q,k)\\&\geq \partial_x H(t,\omega,x',y',z',v',p,q,k)(x-x')+ \partial_yH(t,\omega,x',y',z',v',p,q,k)(y-y')\\&+\partial_z H(t,\omega,x',y',z',v',p,q,k)(z-z')+ \partial_vH(t,\omega,x',y',z',v',p,q,k)(u-u').
		\end{align*}
		\item[(c)] \eqref {ne} holds for all $u\in U$, a.e., a.s.
	\end{enumerate}
	
	Then $\bar u(\cdot)$ is an optimal control.
\end{theorem}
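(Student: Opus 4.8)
The plan is to prove optimality by showing $J(u(\cdot))\ge J(\bar u(\cdot))$, i.e. $\hat y(0):=y(0)-\bar y(0)\ge 0$, for every admissible $u(\cdot)\in\mathcal U$, where $x$ and $(y,z)$ denote the state and cost pair associated with $u(\cdot)$. Unlike the necessary-condition proof, I would compare the \emph{true} trajectories: set $\hat x:=x-\bar x$, $\hat y:=y-\bar y$, $\hat z:=z-\bar z$, which satisfy $\hat x\equiv 0$ and $u-\bar u\equiv 0$ on $[-K,0]$. Here $\hat x$ solves the exact difference SEE with increments $\Delta b:=b(t,x_{t-K,t},u_{\mu_1}(t))-b(t,\bar x_{t-K,t},\bar u_{\mu_1}(t))$ and $\Delta\sigma$ defined analogously, while $(\hat y,\hat z)$ solves the BSDE with generator increment $\Delta f$. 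I write $\Delta H$ for the corresponding increment of $H(t,\cdot,\cdot,\cdot,\cdot,p(t),q(t),k(t))$ with $(p,q)$ solving \eqref{e-adjoint} and $k$ solving \eqref{k}.

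First I would apply It\^o's formula to $k(t)\hat y(t)$ on $[0,T]$. Using $k(0)=-1$ and taking expectations (the stochastic integrals being genuine martingales by the a priori estimates of Theorem~\ref{estimate} and the classical BSDE theory), this expresses $\hat y(0)$ as $-\mathbb{E}[k(T)\hat y(T)]+\mathbb{E}\int_0^T k(t)\big[\partial_y f(t)\hat y(t)+\langle\partial_z f(t),\hat z(t)\rangle-\Delta f(t)\big]\,dt$. Since $k$ solves a linear SDE started at $-1$, it is a strictly negative Dol\'eans--Dade exponential, so $-k(T)>0$ almost surely; combined with the convexity of $h$ (assumption (a)), this gives $-\mathbb{E}[k(T)\hat y(T)]\ge -\mathbb{E}[k(T)\langle\partial_{x^1}h(T),\hat x_{\mu_2}(T)\rangle]$. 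Rewriting the running integrand through the Hamiltonian, via $\partial_y H=-k\,\partial_y f$, $\partial_z H=-k\,\partial_z f$ and $-k\,\Delta f=\Delta H-\langle\Delta b,p\rangle-\langle\Delta\sigma,q\rangle$, yields a lower bound for $\hat y(0)$ in terms of $\Delta H$, the partial derivatives of $H$, and the pairings $\langle\Delta b,p\rangle,\langle\Delta\sigma,q\rangle$.

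Next I would apply It\^o's formula to $\langle p(t),\hat x(t)\rangle_H$, exactly as in the proof of Theorem~\ref{SMP}, using $\hat x(0)=0$, the duality between $A,B$ and $A^*,B^*$, and the non-anticipative/anticipative duality identities \eqref{e:dualilty}, \eqref{non-anti}, \eqref{e:rho*-rem} that pair $\rho_b^*,\rho_\sigma^*,\rho_f^*$ against $\rho_{b,t},\rho_{\sigma,t},\rho_{f,t}$. Because $p$ carries the measure-valued running terminal over $[T-K,T]$ and the path derivatives are non-anticipative, the boundary contribution collapses to $-\mathbb{E}[k(T)\langle\partial_{x^1}h(T),\hat x_{\mu_2}(T)\rangle]$, producing the identity $-\mathbb{E}[k(T)\langle\partial_{x^1}h(T),\hat x_{\mu_2}(T)\rangle]=\mathbb{E}\int_0^T\{\langle\Delta b,p\rangle+\langle\Delta\sigma,q\rangle-\langle\rho_{b,t}(\hat x_{t-K,t}),p\rangle-\langle\rho_{\sigma,t}(\hat x_{t-K,t}),q\rangle+\rho_{f,t}(\hat x_{t-K,t})k\}\,dt$. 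Substituting into the bound from the previous step, the terms $\langle\Delta b,p\rangle$ and $\langle\Delta\sigma,q\rangle$ cancel and the linearized terms recombine into $-\partial_x H(t)(\hat x_{t-K,t})$, giving $\hat y(0)\ge \mathbb{E}\int_0^T\{\Delta H-\partial_x H(t)(\hat x_{t-K,t})-\partial_y H(t)\hat y(t)-\langle\partial_z H(t),\hat z(t)\rangle\}\,dt$.

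Finally I would invoke the convexity of the Hamiltonian (assumption (b)) with increment $v-v'=u_{\mu_1}(t)-\bar u_{\mu_1}(t)$, bounding the integrand below by $\langle\partial_v H(t),u_{\mu_1}(t)-\bar u_{\mu_1}(t)\rangle_{H_1}$; then the change of variables of Remark~\ref{Rem:dual-for-u} (using $u-\bar u\equiv 0$ on $[-K,0]$ and $p,q,k\equiv 0$ past $T$) rewrites $\mathbb{E}\int_0^T\langle\partial_v H(t),u_{\mu_1}(t)-\bar u_{\mu_1}(t)\rangle\,dt$ as $\mathbb{E}\int_0^T\langle\mathbb{E}^{\mathcal F_t}[\int_{-K}^0\partial_v H(t-s)\mu_1(ds)],u(t)-\bar u(t)\rangle_{H_1}\,dt$. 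Assumption (c), namely \eqref{ne} with the choice $u=u(t)\in U$, makes this integrand nonnegative for $dt\times dP$-almost all $(t,\omega)$, whence $\hat y(0)\ge 0$ and $\bar u(\cdot)$ is optimal. The main obstacle is the third step: pushing the It\^o expansion of $\langle p,\hat x\rangle$ through the anticipative operators and the measure-valued running terminal, and tracking signs so that the strict negativity of $k$ orients the convexity of $h$ correctly; the remaining cancellations and the concluding duality are then routine repetitions of the computation in Theorem~\ref{SMP}.
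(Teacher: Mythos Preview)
Your proposal is correct and follows essentially the same route as the paper: apply It\^o's formula to $k(t)(y(t)-\bar y(t))$ and to $\langle p(t),x(t)-\bar x(t)\rangle_H$, combine them via the non-anticipative/anticipative duality identities, and then invoke the convexity of $H$ together with assumption~(c) and the convexity of $h$ (using $k(T)\le 0$) to conclude $y(0)-\bar y(0)\ge 0$. The only cosmetic difference is that you estimate the terminal $h$-term first and then substitute, whereas the paper keeps it inside a single combined identity before splitting into the pieces $A$ and $B$; the computations and the key ingredients are identical.
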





\begin{proof}
	For an arbitrarily chosen control process $u(\cdot )\in \mathcal{U%
	}$, let $x^{u}(\cdot )$ and $(y^{u}(\cdot ),z^{u}(\cdot ))$ be the
	corresponding solutions of \eqref{state} and \eqref{y}, respectively. We
	denote, for $t\in \lbrack 0,T]$, 
	\begin{align*}
		b^{u}(t)=& b(t,x^{u}_{t-K,t},u_{\mu_1}(t)), \\
		\sigma ^{u}(t)=& \sigma (t,x^{u}_{t-K,t},u_{\mu_1}(t)),
		\\
		f^{u}(t)=& f(t,x^{u}_{t-K,t},y^{u}(t),z^{u}(t),u_{
			\mu_1}(t)).
	\end{align*}
	Applying It\^{o}'s formula to $k(t)(y^{u}(t)-\bar{y}(t))$ and $\left\langle
	p(t),x^{u}(t)-\bar{x}(t)\right\rangle _{H}$ on $[0,T]$, we can derive  that 
	\begin{align*}
		& \mathbb{E}\big[k(T)\big(h(x^{u}_{\mu_2}(T))-h(\bar{x}_{\mu_2}(T))\big)\big]+y^{u}(0)-\bar{y}(0) -\mathbb{E}\big[k(T)\big\langle \partial_{x^1}h(T),x^{u}_{\mu_2}(T)-\bar{x}_{\mu_2}
		(T)\big\rangle _{H}\big]\\
		& =\mathbb{E}\int_{0}^{T}\Big\{f_{y}(t)k(t)\big(y^{u}(t)-\bar{y}
		(t)\big)+\big<f_{z}(t)k(t),z^{u}(t)-\bar{z}(t)\big>_{\mathcal{L}%
			_2^0(K;\mathbb{R})}\\
		& \hspace{2em}+\big<b^{u}(t)-b(t),p(t)\big>_{H}+\big<\sigma ^{u}(t)-\sigma
		(t),q(t)\big>_{ \mathcal{L}_2^0}-\big(f^{u}(t)-f(t)\big)k(t) \\
		& \hspace{2em}-\Big<\mathbb{E}^{\mathcal{F}_{t}}\Big[\rho^*_{b,t}\big(p_{t,t+K}|_{[0,T]}\big)+\rho^*_{\sigma,t}\big(q_{t,t+K}|_{[0,T]}\big)-\rho^*_{f,t}\big(k_{t,t+K}|_{[0,T]}\big)\Big]
		,x^{u}(t)-\bar{x}(t)\Big>_{H}\Big\}dt \\
		& =\mathbb{E}\int_{0}^{T}\Big\{H^{u}(t)-H(t)-\partial_xH(t)\big(x^{u}_{t-K,t}-\bar{x}_{t-K,t}\big) -\partial_yH(t)
		\big(y^{u}(t)-\bar{y}(t)\big) 
		\\
		& \hspace{6em}-\big<\partial_zH(t),z^{u}(t)-\bar{z}(t)\big>_{\mathcal{L}_2^0(K;\mathbb{R})}
	\Big\}dt
		=: A,
	\end{align*}
	where the second equality follow from the definition of $H, \rho^*_b,\rho^*_\sigma,\rho^*_f$ and the duality relationship \eqref{e:dualilty}. 
	By the convexity of $H$, we know that 
	\begin{equation}  \label{e:le01}
		A - B \ge 0
	\end{equation}
	where 
	\begin{equation}\label{e:le02}
		\begin{aligned}  
			B=
			&\mathbb{E}\int_{0}^{T}\big<\partial_vH(t), u_{\mu_1}(t)-\bar{u}_{\mu_1}(t)\big>_{H_{1}}dt \\
			=&\mathbb{E}\int_{0}^{T}\Big<\int_{-K}^{0}
			\partial_vH(t-s)\mu_1(ds),u(t)-\bar{u}(t)\Big>_{H_{1}}dt\ge 0, 
		\end{aligned}
	\end{equation}
	with the nonnegativity following from the assumption \eqref{ne}. Therefore, from the convexity of $h$, the
	fact that $k(T)\le 0$, \eqref{e:le01} and \eqref{e:le02},
	\begin{align*}
		y^{u}(0)-\bar{y}(0) = & (A-B)+B -\mathbb{E}\big[k(T)\big(h(x^{u}_{\mu_2}(T))-h(\bar{x}_{\mu_2}(T))\big)\big] \\
		&\hspace{0.5em}+\mathbb{E}\big[k(T)\big\langle \partial_{x^1}h(T),x^{u}_{\mu_2}(T)-\bar{x}_{\mu_2}
		(T)\big\rangle _{H}\big] \ge 0.
	\end{align*}
	This shows $%
	J(u(\cdot ))-J(\bar{u}(\cdot ))\geq 0$ which yields the optimality of $\bar{u%
	}(\cdot )$.
\end{proof}

\section{Some applications}

\label{sec:APP}

In this section, we apply our result to the optimal control problem of parabolic
SPDEs and the linear quadratic (LQ) problem of SEE. 

\subsection{Optimal control problem of path-dependent SPDEs}

Let $H^{1}$ be the Sobolev space of $W^{1,2}(\mathbb{R}^{d}).$ Set $V=H^{1} $
and $H=L^{2}(\mathbb{R}^{d}).$ Consider the super-parabolic  path-dependent SPDE: 
\begin{equation*}
	\left\{ \begin{aligned} 
		\frac{\partial x(t,\zeta )}{\partial t}& =\sum_{i,j=1}^{n}\partial
		_{\zeta _{i}}[\alpha _{ij}(t,\zeta )\partial _{\zeta _{j}}x(t,\zeta
		)]+\sum_{i=1}^{n}\tilde{\alpha}_{i}(t,\zeta )\partial _{\zeta _{i}}x(t,\zeta
		) +b(t,\zeta,x_{t-K,t}(\zeta
		), u_{\mu_1}(t))\\ & +\Big\{\sum_{i=1}^{n}\beta _{i}(t,\zeta )\partial _{\zeta
			_{i}}x(t,\zeta )+\sigma (t,\zeta ,x_{t-K,t}(\zeta), u_{\mu_1}(t))\Big\}{ \dot W(t,\zeta)}, \\ & (t,\zeta )\in \lbrack 0,T]\times
		\mathbb{R}^{d}, \\ x(t,\zeta )& =\gamma(t,\zeta ),\ u(t,\zeta )=v(t,\zeta
		),\quad (t,\zeta )\in \lbrack -K ,0]\times \mathbb{R}^{d}. \end{aligned}%
	\right.
\end{equation*}%
In the above equation, $\alpha _{ij},\tilde{\alpha}_{i},\beta _{i},b,\sigma $ and  $(\gamma,v)$ 
are coefficient functions and initial values, respectively;  $\dot W(t,\zeta)$ is a space-time white noise; $\mu_1$ is a finite measure   on $[-K,0]$ and $u_{\mu_1}(t):=\int_{-K}^{0}u(t+s)\mu_1(ds)$ with  $
u(\cdot) $ being the control process taking values in a convex subset $U$ of a separable Hilbert space $H_{1}$. Suppose $\gamma\in C(-K ,0;H)$ and $v\in L^{2}(-K,0;U)$. If we denote
$dw(t)=\displaystyle\dot W(t,\cdot)dt$, then $\{w(t),\, t\in[0,T]\}$ is a \emph{cylindrical Wiener process} with $\mathcal K=L^2(\R^d)$ (see Section~\ref{sec:pre}).

Consider the problem
of minimizing the cost functional $
J(u(\cdot ))=y(0)$,
where $y(\cdot)$ is the recursive utility subject to the following BSDE: 
\begin{align*}
	y(t)=&\int_{\mathbb{R}^{d}}h(\zeta ,x(T,\zeta ))d\zeta +\int_{t}^{T}\int_{%
		\mathbb{R}^{d}}f\big(s,\zeta,x_{s-K,s}(\zeta),y(s),z(s),u_{\mu_1}(s)\big)d\zeta
	ds\\&-\int_{t}^{T}z(s)d w(s).
\end{align*}
Take 
\begin{equation*}
	A(t)=\displaystyle\sum_{i,j=1}^{n}\partial _{\zeta _{i}}[\alpha _{ij}(t,\zeta )\partial
	_{\zeta _{j}}]+\sum_{i=1}^{n}\tilde{\alpha}_{i}(t,\zeta )\partial _{\zeta
		_{i}},\ B(t)= \displaystyle\sum_{i=1}^{n}\beta _{i}(t,\zeta )\partial _{\zeta _{i}}.
\end{equation*}
Assume that there exist constants $\alpha \in (0,1)$ and $K>1$ such
that 
\begin{equation*}
	\alpha I_{d\times d}+(\beta _{i}\beta _{j})_{d\times d}\leq 2(\alpha
	_{ij})_{d\times d}\leq KI_{d\times d},
\end{equation*}
and impose  proper regularity conditions on the coefficients $b,\sigma ,h$ and $f$, such that (H1)-(H5) hold. Then,
we can obtain the  maximum principle by  Theorem~\ref{SMP}, and its sufficiency under proper convex assumptions  by Theorem~\ref%
{sufficient}.

\subsection{LQ problem for PSEEs}

\label{4.1} Suppose that the control domain is a separable Hilbert space $
H_{1}$ and  $\mathcal{U}=L_{\mathbb{F}}^{2}(0,T;H_{1})$. In \eqref{state}
and \eqref{y},  for $(x,v)\in C(-K,T;H)\times H_1$ and  $x^1,x^2\in H$, let 
\begin{align*}
	b(t,x,v)&=A_{1}(t)x+C(t)v, \\
	\sigma (t,x,v)&=B_{1}(t)x+D(t)v, \\
	h(x^1)&=\big\langle \Phi x^1,x^1\big\rangle_{H},\\
	f(t,x^2,y,z,v)&=  \left\langle F(t)x^2,x^2\right\rangle
	_{H}+{G}_{1}{(t)y+G}_{2}{(t)z}+\left\langle N(t)v,v\right\rangle _{H_{1}}, 
\end{align*}
where $A_{1}:[0,T]\times \Omega \rightarrow \mathcal{L}\big( C(-K,T;H),H\big)$, $B_{1}:[0,T]\times \Omega \rightarrow \mathcal{L}\big(C(-K,T),\mathcal L_2^0\big)$,  $
C:[0,T]\times \Omega \rightarrow \mathcal{L}(H_{1},H)$, $
D:[0,T]\times \Omega \rightarrow \mathcal{L}(H_{1},\mathcal L_2^0)$, $F:[0,T]\times \Omega \rightarrow \mathcal{L}(H)$, $%
G_{1}:[0,T]\times \Omega \rightarrow \mathbb{R}$,  $
G_{2}:[0,T]\times \Omega \rightarrow \mathcal L\big(\mathcal L_2^0(\mathcal K,\R),\R\big)$, $N:[0,T]\times
\Omega \rightarrow \mathcal{L}(H_{1})$, and $\Phi :\Omega \rightarrow 
\mathcal{L}(H)$.

Then, the control system is as follows: 
\begin{equation}
	\left\{ \begin{aligned}
		dx(t)&=\big[A(t)x(t)+A_1(t)x_{t-K,t}+C(t)u_{\mu_1}(t)\big]dt\\&\hspace{0.5em}+\big[B(t)x(t)+B_1(t)x_{t-K,t}+D(t)u_{\mu_1}(t)\big]dw(t),\quad t\in[0,T],\\
		x(t)&=\gamma(t),\quad u(t)=v(t),\quad t\in[-K,0], \end{aligned}\right.
	\label{lqx}
\end{equation}%
and the recursive utility $y(\cdot )$ is governed by 
\begin{equation}
	\left\{ \begin{aligned} dy(t)=&\,\,\Big\{\left\langle
		F(t)x(t),x(t)\right\rangle_H+ 
		G_1(t)y(t)+G_2(t)z(t)\\&+\left\langle
		N(t)u_{\mu_1}(t),u_{\mu_1}(t)\right\rangle_{H_1}\Big\}dt-z(t)dw(t),\quad\,t\in[0,T],\\
		y(T)=&\,\,\left\langle\Phi x_{\mu_2}(T),x_{\mu_2}(T)\right\rangle_H. \end{aligned}\right.
\end{equation}
We aim to minimize $J(u(\cdot )):=y(0)$ over $\mathcal{U}$. Assume the following conditions.

\begin{itemize}
	\item[(L1)] The operators $A:[0,T]\times \Omega \rightarrow 
	\mathcal{L}(V;V^{\ast })$ and $B:[0,T]\times \Omega \rightarrow \mathcal{L}%
	(V;\mathcal L_2^0)$ satisfy  (A2)-(A3).
	
	\item[(L2)] $\gamma(\cdot )\in C(-K,0;H)$ and 
	$v(\cdot )\in L^{2}(-K ,0;H_{1}) $. The processes $
	A_{1},B_{1},C,D,F, G_1,G_2,N$ are uniformly bounded, $
	A_{1},B_{1},C,D,N$ are  weakly {$\mathbb{F}$-}adapted  (for the definition, see \cite
	[Chapter 1]{81k} and  \cite
	[Section 2]{liu2021maximum}) 
	and $G_{1},G_{2} $ are $\mathbb{F}$-adapted. $\Phi $ is uniformly bounded
	and weakly $\mathcal{F}_{T}$-measurable.
	
	\item[(L3)] $F,$  $\Phi $ are symmetric and nonnegative
	definite for  almost all $%
	(t,\omega )\in \lbrack 0,T]\times \Omega$. Furthermore, $N$ is symmetric and
	uniformly positive definite    for  almost all $%
	(t,\omega )\in \lbrack 0,T]\times \Omega$.
\end{itemize}

The Hamiltonian becomes 
\begin{align*}
	H(t,x,y,z,v,p,q,k) & =\left\langle A_{1}(t)x+C(t)v,p\right\rangle _{H} +\left\langle
	B_{1}(t)x+D(t)v,q\right\rangle _{\mathcal L_2^0}\\&\hspace{2em}- \left\langle F(t)x,x\right\rangle _{H}-G_{1}{(t)y-G}_{2}{(t)z}
	-\left\langle N(t)v,v\right\rangle _{H_{1}}.
\end{align*}
Assume that $\bar{u}(\cdot )$ is an optimal control and $\bar{x}(\cdot )$ is
the corresponding solution of equation \eqref{lqx}. Denote, for $Z\in C(-K,T;H)$, \begin{align*}
	\rho_{b,t}(Z)=A_1(t)Z_{t-K,t}, \,\,\,\rho_{\sigma,t}(Z)=B_1(t)Z_{t-K,t},\,\,\,\rho_{f,t}(Z)=2\langle F(t)\bar x(t),Z(t)\rangle.
\end{align*}
Note that $\rho^*_{f,t}(Q)=2F(t)\bar x(t)Q(t), $ for $Q\in L^{2}(0,T;\mathbb{R}).$ Then the adjoint
equation  is 
\begin{equation*}
	\left\{ \begin{aligned} p(t)=&-\int_{(t,T]\cap [T-K,T]}\mathbb E^{\mathcal F_s}[2k(T)\Phi x_{\mu_2}(T)]\mu_2(d(s-T))-2\int_t^TF(s)\bar x(s)k(s)ds\\&+\int_t^T\Big\{A^*(s)p(s)+B^*(s)q(s)+\mathbb E^{\mathcal
			F_s}\Big[\rho_{b,s}^*( p_{s,s+K}|_{[0,T]})+\rho_{\sigma,s}^*(q_{s,s+K}|_{[0,T]})\Big]\Big\}ds\\&-\int_t^Tq(s)dw(s),\quad \,t\in[0,T],\\
		p(t)&=q(t)=0,\quad t\in(T,T+K], \end{aligned}%
	\right.
\end{equation*}
with $k(\cdot )$ satisfying
\begin{equation*}
	\left\{ \begin{aligned} dk(t)=&\,\,
		G_1(t)k(t)dt+{ G_2(t)k(t)dw(t)},\quad t\in[0,T],\\ k(0)=&\,\,-1. \end{aligned}\right.
\end{equation*}
Now \eqref{ne} in the maximum principle becomes 
\begin{align*}
	& \Big<\mathbb{E}^{\mathcal{F}_{t}}\Big[%
	\int_{-K}^{0}C^{\ast }(t-s)p(t-s)\mu_1(ds) +\int_{-K}^{0}D^{\ast }(t-s)q(t-s)\mu_1(ds)\Big]\\&-2N(t)\bar{u}%
	(t),u-\bar{u}(t)\Big>_{H_{1}}=0.
\end{align*}
From this we can deduce that 
\begin{equation*}
	\mathbb{E}^{\mathcal{F}_{t}}\Big[
	\int_{-K}^{0}C^{\ast }(t-s)p(t-s)\mu_1(ds)+\int_{-K
	}^{0}D^{\ast }(t-s)q(t-s)\mu_1(ds)\Big]-2N(t)\bar{u}(t)=0,
\end{equation*}
and thus, 
\begin{equation*}
	\bar{u}(t)=-\frac{1}{2}N^{-1}(t)\Big\{
	\mathbb{E}^{\mathcal{F}_{t}}\Big[\int_{-K}^{0}C^{\ast
	}(t-s)p(t-s)\mu_1(ds)+\int_{-K}^{0}D^{\ast }(t-s)q(t-s)\mu_1(ds)\Big]\Big\}.
\end{equation*}
By Theorem \ref{sufficient}, we can conclude that $\bar{u}(\cdot )$ defined
above is indeed an optimal control of the LQ problem.

\appendix

\noindent\textbf{Acknowledgments.} We wish to thank Ying Hu for helpful discussions and comments. G. Liu's
Research is partially supported by National Natural Science Foundation of
China (No. 12201315 and  No. 12571479) and the Fundamental Research Funds for the Central
Universities, Nankai University (No. 63221036). J. Song is partially supported by  National Key R\&D Program of China (No. 2023YFA1009200), National Natural Science Foundation of
China (No. 12471142), and the Fundamental Research Funds for the Central Universities.

\end{document}